\numberwithin{equation}{section}
\let\al=\alpha
\let\a=\alpha
\let\e=\varepsilon
\let\f=\frac
\let\ga=\gamma
\let\Om=\Omega
\let\ol=\overline
\let\na=\nabla
\let\pa=\partial
\let\tre=\triangleq
\def\R{\mathbf R}
\def\T{\mathbf T}
\def\Z{\mathbf Z}
\def\N{\mathbf N}
\def\no{\noindent}
\def\eqdef{\buildrel\hbox{\footnotesize def}\over =}
\newcommand{\beq}{\begin{equation}}
\newcommand{\eeq}{\end{equation}}
\newcommand{\ben}{\begin{eqnarray}}
\newcommand{\een}{\end{eqnarray}}
\newcommand{\beno}{\begin{eqnarray*}}
\newcommand{\eeno}{\end{eqnarray*}}
\newtheorem{theorem}{Theorem}[section]
\newtheorem{lemma}[theorem]{Lemma}
\newtheorem{proposition}[theorem]{Proposition}
\newtheorem{remark}[theorem]{Remark}
\begin{document}

\title[Global well-posedness of the MHD equations]
{Global well-posedness of the MHD equations in a homogeneous magnetic field}

\author{Dongyi Wei}
\address{School of Mathematical Sciences, Peking University, 100871, Beijing, P. R. China}
\email{jnwdyi@163.com}

\author{Zhifei Zhang}
\address{School of Mathematical Sciences, Peking University, 100871, Beijing, P. R. China}
\email{zfzhang@math.pku.edu.cn}

\date{\today}

\begin{abstract}
In this paper, we study the MHD equations with small viscosity and resistivity coefficients, which may be different. This is a typical setting in high temperature plasmas. It was proved that the MHD equations are globally well-posed if the initial velocity is close to 0 and the initial magnetic field is close to a homogeneous magnetic field in the weighted H\"{o}lder space, where the closeness is independent of the dissipation coefficients.
 \end{abstract}

\maketitle

\section{Introduction}
In this paper, we consider the incompressible magneto-hydrodynamics (MHD) equations in $[0,T)\times \Omega, \Omega \subseteq \R^{d}$:
\begin{align}\label{eq:MHD}
\left\{
\begin{array}{l}
\partial_t v-\nu\Delta v+v\cdot\nabla v+\nabla p=b\cdot\nabla b,\\
\partial_t b-\mu\Delta b+v\cdot\nabla b=b\cdot\nabla v,\\
\text{div } v=\text{div } b=0,
\end{array}\right.
\end{align}
where $v$ denotes the velocity field and $b$ denotes the magnetic field, and $\nu\ge 0$ is the viscosity coefficient, $\mu\ge 0$ is the resistivity coefficient.  If $\nu=\mu=0$, (\ref{eq:MHD}) is the so called ideal MHD equations; If $\nu>0$ and $b=0$, (\ref{eq:MHD}) is reduced to the Navier-Stokes equations.  We refer to \cite{Ser} for the mathematical introduction to the MHD equations.

It is well-known that the 2-D MHD equations with full viscosities(i.e., $\nu>0$ and $\mu>0$) have global smooth solution.
In general case, the question of whether smooth solution of the MHD equations develops singularity in finite time is basically open \cite{Ser,CF}. Recently, Cao and Wu \cite{Cao1} studied the global regularity of the 2-D MHD equations with partial dissipation and magnetic diffusion. We refer to \cite{Cao2, Che, Fer, He, Jiu,Lei} and references therein for more relevant results.

In this paper, we are concerned with the global well-posedness of the MHD equations in a homogeneous magnetic field $B_0$.
Recently, there are a lot of works \cite{AZ, LXZ, Ren1, Ren2, Zhang}  devoted to the case without resistivity(i.e, $\nu>0$ and $\mu=0$). Roughly speaking, it was proved that the MHD equations are globally well-posed and the solution decays in time if the initial velocity field is close to 0 and the initial magnetic field is close to $B_0$. These results especially justify the numerical observation \cite{CC}: the energy of the MHD equations is dissipated at a rate independent of the ohmic resistivity.

In high temperature plasmas, both the viscosity coefficient $\nu$ and resistivity coefficient $\mu$ are usually very small \cite{CC}.
Up to now, the heating mechanism of the solar corona is still an unsolved problem in physics \cite{P}. So, it is very interesting to investigate the long-time dynamics of the MHD equations in the case when the dissipation coefficients are very small.

For the simplicity, let us first look at the ideal case. Following \cite{BSS}, we rewrite
the system (\ref{eq:MHD}) in terms of  the Els$\ddot{a}$sser variables
$$Z_+=v+b,\quad Z_-=v-b.$$
Then the ideal MHD equations  (\ref{eq:MHD})  can be written as
\begin{align}\label{eq:MHD-e}
\left\{
\begin{array}{l}
\partial_t Z_++Z_-\cdot\nabla Z_+=-\nabla p,\\
\partial_t Z_-+Z_+\cdot\nabla Z_-=-\nabla p,\\
\text{div} Z_+=\text{div}Z_-=0.
\end{array}\right.
\end{align}
 We introduce the fluctuations
 \beno
 z_+=Z_+-B_0,\quad z_-=Z_-+B_0.
 \eeno
Then the system (\ref{eq:MHD-e}) can be reformulated as
\begin{align}\label{eq:MHD-f}
\left\{
\begin{array}{l}
\partial_t z_++Z_-\cdot\nabla z_+=-\nabla p,\\
\partial_t z_-+Z_+\cdot\nabla z_-=-\nabla p,\\
\text{div} z_+=\text{div}z_-=0.
\end{array}\right.
\end{align}
In the case of  $\Omega=\R^d$,  Bardos-Sulem-Sulem \cite{BSS} proved that  for large time,  the solution $z_\pm$ of (\ref{eq:MHD-f})  tends to linear Alfv\'{e}n waves:
\beno
\pa_tw_\pm\pm B_0\cdot\na w_\pm=0.
\eeno
In two recent works \cite{Cai} and \cite{HXY},  Cai-Lei and He-Xu-Yu studied the global well-posedness of (\ref{eq:MHD})  for any $\nu=\mu\ge 0$ and
$\Om=\R^3$.  The result in \cite{Cai}  also includes the case of $\Omega=\R^2$.

From the physical point of view, it is more natural to consider the MHD equations in a domain with the boundary.  One frequently used domain in physics is a slab bounded by two hyperplanes, i.e., $\Om=\R^{d-1}\times [0,1]$. More importantly,  although both $\nu$ and $\mu$ are very small, they should be different in the real case. However, the proof in \cite{Cai, HXY}  strongly relies on the facts that  $\Omega$ is a whole space and $\nu=\mu$.

The main goal of this paper is to prove the global well-posedness of (\ref{eq:MHD}) in the physical case when $\Om$ is a slab and $\nu\neq \mu$. In this case, we need to impose the suitable boundary conditions
on $z_\pm$.  Let  $z_\pm$ be a function of $(t, x,y), (x,y)\in \Om$. In the case when $\nu=\mu=0$, we impose the nonpenetrating boundary condition
\ben\label{bc:zero}
z^{d}_\pm=0 \quad \text{on}\quad y=0,1.
\een
In the case when $\nu>0$ and $\mu>0$, we impose the Navier-slip  boundary condition
\ben\label{bc:viscosity}
&&z^{d}_\pm=0, \quad \pa_d z_\pm^i=0\quad i=1,\cdots, d-1,\quad \text{on}\quad y=0,1.
\een

To deal with the boundary case,  our idea is to use the symmetric extension and solve the MHD equations in the framework of H\"{o}lder spaces $C^{1,\al}$ for $0<\al<1$. In the ideal case, we give a representation formula of the pressure by using the symmetric extension. Although the extended solution has not the same regularity as the origin one under the nonpenetrating boundary condition, we find that $\na p$ still lies in $C^{1,\al}$ based on the representation formula.
In the viscous case, we can reduce the slab domain to $\Om=\R^{d-1}\times \T$ by using the symmetric extension, because the extended solution still keeps the $C^{1,\al}$  regularity under the Navier-slip boundary condition.

The most challenging task comes from the case of $\nu\neq\mu$.
In the case of $\nu=\mu$, the following formulation plays a crucial role in the proof of \cite{Cai, HXY}:
\begin{align*}
\left\{
\begin{array}{l}
\partial_t z_++Z_-\cdot\nabla z_+=\nu\Delta z_+-\nabla p,\\
\partial_t z_-+Z_+\cdot\nabla z_-=\nu\Delta z_--\nabla p.
\end{array}\right.
\end{align*}
Indeed, the viscosity leads to more technical troubles compared with the ideal case. To handle the case of $\nu\neq\mu$, we need to introduce
many new ideas. First of all, we introduce a key decomposition: let
$\mu_1=\dfrac{\nu+\mu}{2},\ \mu_2=\dfrac{\nu-\mu}{2},$ and we decompose $z_+=z_+^{(1)}+z_+^{(2)},\ z_-=z_-^{(1)}+z_-^{(2)}$ so that
\begin{align*}
\left\{\begin{array}{l}
\partial_t z_+^{(1)}+Z_-\cdot\nabla z_+^{(1)}=\mu_1\triangle z_+^{(1)}-\nabla p_+^{(1)},\\
\partial_t z_-^{(1)}+Z_+\cdot\nabla z_-^{(1)}=\mu_1\triangle z_-^{(1)}-\nabla p_-^{(1)},\\
\partial_t z_+^{(2)}+Z_-\cdot\nabla z_+^{(2)}=\mu_1\triangle z_+^{(2)}+\mu_2\triangle z_--\nabla p_+^{(2)},\\
\partial_t z_-^{(2)}+Z_+\cdot\nabla z_-^{(2)}=\mu_1\triangle z_-^{(2)}+\mu_2\triangle z_+-\nabla p_-^{(2)}.
\end{array}\right.
\end{align*}
The next task is to establish a closed uniform estimate for the solution $z_\pm^{(1)}$ and $z_\pm^{(2)}$ with respect to $\mu_1$ and $t$.
For this end, we need the following key ingredients:
\begin{itemize}

\item The construction of  the weighted H\"{o}lder spaces for the solution.
Due to the appearance of the extra trouble term $\Delta z_\pm$, we have  to work in the spaces with different regularity and weight for the solution $z_\pm^{(1)}$ and  $z_\pm^{(2)}$.  Such incomformity gives rise to the essential difficulties. Especially, the choice of the weight is very crucial.

\item Uniform estimates of the transport equation in the weighted H\"{o}lder spaces, which are used to control the growth of Lagrangian map.

\item Uniform estimates for the parabolic equation with variable coefficients. The proof is based on the uniform estimates of heat operator  in the weighted H\"{o}lder spaces.

\item Boundedness of Riesz transform and its commutator in the weighted H\"{o}lder spaces, which is essentially used to handle the nonlocal pressure. To our knowledge, these results are new, and the proof is highly nontrivial.
\end{itemize}

\smallskip

In this paper, we consider the MHD equations in a homogeneous magnetic field. In the real case(for example, solar corona), it is more natural to consider the MHD equations in an inhomogeneous magnetic field. An important question is to consider the decay of Alfv\'{e}n waves in an inhomogeneous magnetic field $B_0(y)=(b_1(y), b_2(y), 0)$. This is similar to the situation of Landau damping.

\section{The weighted H\"{o}lder spaces and symmetric extension}

\subsection{Weighted H\"{o}lder spaces}

Let $\Omega\subseteq \R^{d}$ be a domain and $\alpha \in (0,1]$. We denote by $C^{k,\alpha}(\Omega),\ (k=0,1)$ the H\"{o}lder space equipped with the norm
\beno
|u|_{0,\al;\Om}\eqdef |u|_{0;\Om}+[u]_{\al;\Om},\quad |u|_{1,\al; \Omega}\eqdef  |u|_{0;\Om}+|\na u|_{0,\al;\Omega},
\eeno
where
\beno
\ |u|_{0;\Om}=\sup_{X\in \Om}|u(X)|,\quad [u]_{\al;\Om}=\sup_{X,Y\in \Om}\frac{|u(X)- u(Y)|}{|X-Y|^{\al}}.
\eeno

Let $h(X)\in C(\R^{d})$ be a positive bounded function. We introduce the following weighted $C^{k,\al}$ norms
\beno
|u|_{0,\al;h, \Om}\eqdef |u|_{0;h,\Om}+[u]_{\al;h, \Om},\quad |u|_{1,\al;h, \Om}\eqdef |u|_{0;h,\Om}+|\nabla u|_{0,\al;h, \Om},
\eeno
where
\beno
|u|_{0;h,\Om}=\big|\frac u h\big|_{0;\Om},\quad [u]_{\al;h, \Om}=\sup_{X,Y\in \Om}\frac{| u(X)- u(Y)|}{(h(X)+h(Y))|X-Y|^{\al}}.
\eeno
We say that $u\in C^{k,\al}_h(\Om)$ if $|u|_{k,\al;h,\Om}<+\infty$. We also introduce
 \beno
|u|_{k,\al;h,\Om,T}\eqdef\sup_{0\leq t \leq T}|u(t)|_{k,\al;h(t),\Om}.
\eeno
When $\Om=\R^{d}$,  we will omit the subscript $\Om$ in the norm of H\"{o}lder spaces.
\smallskip

The following two lemmas can be proved by using the definition of H\"{o}lder norm.
\begin{lemma}\label{lem:product}
Let $h, h_1, h_2$ be the weight functions so that there exists a constant $c_0$ so that
\ben\label{ass:W}
0<c_0h(X)\le h(Y)\quad \text{for any }X,Y\in \R^d,\,\,|X-Y|\le 2.
\een
Then there exists a constant $C$ depending only on $c_0$
so that for $k=0,1$,
\beno
&&|u|_{0,\al; h,\Om}\le C\big(|u|_{0;h,\Om}+|\na u|_{0;h,\Om}\big),\\
&&|uw|_{k,\al; h_1h_2, \Om}\le C|u|_{k,\al;h_1,\Om}|w|_{k,\al;h_2,\Om},\\
&&\left|\int_t^su(r)dr\right|_{k,\al; \int_t^sh(r)dr,\Om}\le \sup\limits_{t\leq r\leq s}|u(r)|_{k,\al; h(r),\Om}.
\eeno
\end{lemma}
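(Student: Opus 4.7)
All three estimates follow from the definitions by carefully splitting the supremum over pairs $(X,Y)\in\Om\times\Om$ into the near regime $|X-Y|\le 2$ and the far regime $|X-Y|>2$, using the hypothesis \eqref{ass:W} to interchange $h(X)$ and $h(Y)$ with a multiplicative constant only in the near regime.

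For the first inequality, in the far regime I would simply bound $|u(X)-u(Y)|\le h(X)|u|_{0;h,\Om}+h(Y)|u|_{0;h,\Om}$ and divide by $(h(X)+h(Y))|X-Y|^{\al}\ge 2^{\al}(h(X)+h(Y))$. In the near regime I would parametrize the segment from $X$ to $Y$ and apply the mean value inequality; every intermediate point $Z$ lies within distance $2$ of $X$, so \eqref{ass:W} gives $h(Z)\le c_0^{-1}h(X)\le c_0^{-1}(h(X)+h(Y))$, yielding $|u(X)-u(Y)|\le c_0^{-1}(h(X)+h(Y))|X-Y|\,|\na u|_{0;h,\Om}$. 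Dividing by $(h(X)+h(Y))|X-Y|^{\al}$ leaves a factor $|X-Y|^{1-\al}\le 2^{1-\al}$, and combining the two regimes gives the claimed bound.

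For the second (product) inequality, I would first dispose of the case $k=0$, write
\[
u(X)w(X)-u(Y)w(Y)=u(X)\bigl(w(X)-w(Y)\bigr)+w(Y)\bigl(u(X)-u(Y)\bigr),
\]
and estimate each factor by its weighted $C^{0,\al}$ norm. In the near regime the cross terms $h_1(X)h_2(Y)$ and $h_1(Y)h_2(X)$ appear; \eqref{ass:W} gives $h_1(X)h_2(Y)\le c_0^{-1}h_1(X)h_2(X)$ and similarly for the other cross term, so the full product satisfies $(h_1(X)+h_1(Y))(h_2(X)+h_2(Y))\le(1+c_0^{-1})\bigl((h_1h_2)(X)+(h_1h_2)(Y)\bigr)$, which is exactly the weight needed on the left. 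In the far regime I argue directly from $|uw|\le h_1h_2|u|_{0;h_1,\Om}|w|_{0;h_2,\Om}$ together with $|X-Y|^{\al}\ge 2^{\al}$. The case $k=1$ then reduces to $k=0$ via the Leibniz rule $\na(uw)=(\na u)w+u\,\na w$.

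For the third (time integral) inequality, I would simply pull the norms inside the integral. Since $h$ may be time-dependent, for each fixed $(X,Y)$ with $|X-Y|\le 2$ I would estimate
\[
\Bigl|\int_t^s u(r,X)\,dr-\int_t^s u(r,Y)\,dr\Bigr|\le\int_t^s\bigl(h(r,X)+h(r,Y)\bigr)|X-Y|^{\al}[u(r)]_{\al;h(r),\Om}\,dr,
\]
factor out $\sup_{t\le r\le s}[u(r)]_{\al;h(r),\Om}$ and $|X-Y|^{\al}$, and observe that the remaining $\int_t^s(h(r,X)+h(r,Y))\,dr$ is precisely the associated weight at $(X,Y)$; the pointwise and gradient estimates are analogous, using $\na_X\int=\int\na_X$. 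No regime splitting is even needed here.

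The only step with any subtlety is the near/far dichotomy in the first two parts — in particular, verifying that the near-regime application of \eqref{ass:W} produces the weight $h(X)+h(Y)$ (respectively $(h_1h_2)(X)+(h_1h_2)(Y)$) rather than an asymmetric combination. Once that bookkeeping is clear, every bound is a one-line consequence of the definitions, and the constant $C$ depends only on $c_0$ and $\al$.
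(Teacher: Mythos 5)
Your proof is correct and is exactly the routine verification from the definitions that the paper leaves to the reader (it states only that this lemma ``can be proved by using the definition of H\"{o}lder norm''), with the right near/far splitting at $|X-Y|=2$ and the right handling of the cross terms $h_1(X)h_2(Y)$. The only point worth a remark is that the mean-value step in the first inequality integrates $\na u$ along the segment from $X$ to $Y$, which must lie in $\Om$; this is harmless here because every domain used in the paper ($\R^d$, the strip $\R^{d-1}\times[0,1]$, $\R^k\times\T^{d-k}$) is convex or a quotient of a convex set.
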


\begin{lemma}\label{lem:comp}
Let $\Phi$ be a map from $\Om$ to $\Om$ with $\na \Phi\in C^{0,\al}(\Om)$.
It holds that
\beno
&&|u\circ\Phi|_{0,\a;h\circ\Phi,\Om}\leq |u|_{0,\a;h,\Om}\max\big\{|\nabla\Phi|_{0;\Om}^{\a},1\big\},\\
&&|u\circ\Phi|_{1,\a;h\circ\Phi,\Om}\leq |u|_{1,\a;h,\Om}\max\big\{|\nabla\Phi|_{0;\Om}^{\a},1\big\}\max\big\{|\nabla\Phi|_{0,\a; \Om},1\big\}.
\eeno
Here and in what follows, $|\na \Phi|$ denotes the matrix norm defined by
\ben\label{def:matrix}
|A|\eqdef \sup_{|X|=1}|AX|.
\een
\end{lemma}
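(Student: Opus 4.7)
The plan is to derive both bounds directly from the definitions, using the chain rule for the gradient and a factorization trick that replaces $|X-Y|$ by $|\Phi(X)-\Phi(Y)|$ so that the $\al$-seminorm of $u$ at image points of $\Phi$ can be invoked. Throughout, I use that $\Om$ in the applications ($\R^d$ or the slab $\R^{d-1}\times[0,1]$) is convex, so the mean value inequality yields $|\Phi(X)-\Phi(Y)|\le|\na\Phi|_{0;\Om}|X-Y|$.

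For the first inequality, the $0$-norm bound $|u\circ\Phi|_{0;h\circ\Phi,\Om}\le|u|_{0;h,\Om}$ is immediate from $\Phi(\Om)\subseteq\Om$. For the $\al$-seminorm I rewrite (when $\Phi(X)\ne\Phi(Y)$)
$$\frac{|u(\Phi(X))-u(\Phi(Y))|}{(h(\Phi(X))+h(\Phi(Y)))|X-Y|^{\al}}=\frac{|u(\Phi(X))-u(\Phi(Y))|}{(h(\Phi(X))+h(\Phi(Y)))|\Phi(X)-\Phi(Y)|^{\al}}\cdot\Big(\frac{|\Phi(X)-\Phi(Y)|}{|X-Y|}\Big)^{\al},$$
bounding the first factor by $[u]_{\al;h,\Om}$ and the second by $|\na\Phi|_{0;\Om}^{\al}$. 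Adding the $0$-norm contribution and inserting $\max\{|\na\Phi|_{0;\Om}^{\al},1\}$ to handle the regime $|\na\Phi|_{0;\Om}<1$ yields the first bound.

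For the second inequality, the chain rule gives $\na(u\circ\Phi)=((\na u)\circ\Phi)\cdot\na\Phi$, so submultiplicativity of the matrix norm controls the weighted $0$-norm of $\na(u\circ\Phi)$ by $|\na u|_{0;h,\Om}|\na\Phi|_{0;\Om}$. For the $\al$-seminorm I add and subtract:
\begin{align*}
\na(u\circ\Phi)(X)-\na(u\circ\Phi)(Y)&=\bigl[(\na u)(\Phi(X))-(\na u)(\Phi(Y))\bigr]\cdot\na\Phi(X)\\
&\quad+(\na u)(\Phi(Y))\cdot\bigl[\na\Phi(X)-\na\Phi(Y)\bigr].
\end{align*}
Reusing the first-inequality trick on the $(\na u)\circ\Phi$ difference bounds the first piece by $[\na u]_{\al;h,\Om}|\na\Phi|_{0;\Om}^{1+\al}$, while the second piece is bounded by $|\na u|_{0;h,\Om}[\na\Phi]_{\al;\Om}$ after cancelling the weight via $h(\Phi(Y))\le h(\Phi(X))+h(\Phi(Y))$.

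It remains to collect the four contributions $|u|_{0;h,\Om}$, $|\na u|_{0;h,\Om}|\na\Phi|_{0;\Om}$, $[\na u]_{\al;h,\Om}|\na\Phi|_{0;\Om}^{1+\al}$, and $|\na u|_{0;h,\Om}[\na\Phi]_{\al;\Om}$, and check each is dominated by its counterpart in $|u|_{1,\al;h,\Om}MN$ where $M=\max\{|\na\Phi|_{0;\Om}^{\al},1\}$ and $N=\max\{|\na\Phi|_{0,\al;\Om},1\}$; this follows from $M,N\ge 1$ together with $MN\ge|\na\Phi|_{0;\Om}$, $MN\ge|\na\Phi|_{0;\Om}^{1+\al}$, and $MN\ge[\na\Phi]_{\al;\Om}$. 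No real obstacle arises: the argument is essentially the classical H\"{o}lder composition estimate, and the weight plays a passive role because $h$ is consistently evaluated at $\Phi$-images, so the multiplicative constants depend only on $\na\Phi$.
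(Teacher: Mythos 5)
Your proof is correct and is exactly the standard composition argument the paper has in mind (the paper omits the proof, remarking only that it follows from the definition of the H\"older norm); your explicit use of convexity of $\Om$ to get $|\Phi(X)-\Phi(Y)|\le|\na\Phi|_{0;\Om}|X-Y|$ is the right caveat and holds for the domains used here. The only point to tighten is the final bookkeeping: since the second inequality is stated with constant $1$, the two contributions carrying $|\na u|_{0;h,\Om}$ should be combined via $|\na\Phi|_{0;\Om}+[\na\Phi]_{\al;\Om}=|\na\Phi|_{0,\al;\Om}\le N\le MN$, rather than each being separately dominated by the single counterpart $|\na u|_{0;h,\Om}MN$ (which would double-count that term and yield an extra factor of $2$).
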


To deal with the viscous case, we introduce the following scaled weighted H\"{o}lder space.
Let $\al \in (0,1), R\geq0$ and define
\beno
&&|u|_{0,\a;h,R}\eqdef |u|_{0;h}+R^{\a}[u]_{\a;h},\\
&&|u|_{1,\a;h,R}\eqdef |u|_{0,\al;h}+\max(R, R^{1-\al}\big)|\nabla u|_{0,\a;h,R}.
\eeno
For this kind of weight spaces, we have the analogous of Lemma \ref{lem:product} and Lemma \ref{lem:comp}. For example, if $h(X)$
satisfies
\ben\label{ass:W1}
0<c_0h(X)\le h(Y)\quad \text{for any }X,Y\in \R^d,\,\,|X-Y|\le 2R.
\een Then for $R\ge 1$, we  have
\beno
|u|_{0;h}+R|\nabla u|_{0,\a;h,R}\leq |u|_{1,\a;h,R}\leq |u|_{0,\a;h,R}+R|\nabla u|_{0,\a;h,R}\leq C\big(|u|_{0;h}+R|\nabla u|_{0,\a;h,R}\big).
\eeno
Here $C$ is a constant depending only on $c_0$. In the following, we will fix $\al \in (0,1)$.

\begin{lemma}\label{lem:inhom}
Let $\ga>0$ and $h(X)>0$. Then there exists a constant $C$ independent of $h, \ga,  t$ so that
\begin{align*}
&\Big|\int_0^tu(s)ds\Big|_{1,\a;h,\sqrt{k+\ga t}}\leq C\ga^{-1}\sup\limits_{0<s<t}\Big((\ga s)^{\frac{1}{2}}(\ga (t-s))^{\frac{1}{2}}|u(s)|_{0,\a;h}\\&\quad+\varphi_{\a}(\sqrt{k+\ga s})(\ga (t-s))^{1-\frac{\a}{2}}|\nabla u(s)|_{0;h}+\varphi_{\a}(\sqrt{k+\ga s})(\ga (t-s))^{\frac{3-\a}{2}}[\nabla u(s)]_{1;h}\Big),
\end{align*}
where $\varphi_\al(R)=\max(R,R^{1+\al})$.
\end{lemma}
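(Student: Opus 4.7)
The plan is to expand $|v|_{1,\al;h,R}$ for $v = \int_0^t u(s)\,ds$ and $R = \sqrt{k+\ga t}$ into its four natural constituents---$|v|_{0;h}$, $[v]_{\al;h}$, $\max(R, R^{1-\al})|\na v|_{0;h}$, and $\max(R, R^{1-\al})R^\al[\na v]_{\al;h}$---and to bound each by $C\ga^{-1}M$, where $M$ denotes the supremum appearing on the right-hand side. The first two are immediate: pulling the sup norm and the $\al$-H\"older seminorm inside the time integral and invoking the first term of $M$ reduces the estimate to the elementary identity
\beno
\int_0^t\frac{ds}{(\ga s)^{1/2}(\ga(t-s))^{1/2}}=\frac{\pi}{\ga}.
\eeno

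For the piece $\max(R,R^{1-\al})|\na v|_{0;h}$ I would use the second term of $M$, giving $|\na v|_{0;h}\leq M\int_0^t[\varphi_\al(\sqrt{k+\ga s})(\ga(t-s))^{1-\al/2}]^{-1}\,ds$. After the change of variable $r=\ga s$, the required bound reduces to showing
\beno
\int_0^{\ga t}\frac{dr}{\varphi_\al(\sqrt{k+r})(\ga t-r)^{1-\al/2}}\leq \frac{C}{\max(R,R^{1-\al})}.
\eeno
I would prove this by splitting at $r=\ga t/2$: on $(\ga t/2,\ga t)$ use $\sqrt{k+r}\geq R/\sqrt 2$, and on $(0,\ga t/2)$ combine the crude factor $(\ga t)^{-(1-\al/2)}$ with the substitution $u=\sqrt{k+r}$ to evaluate $\int_0^{\ga t/2}dr/\varphi_\al(\sqrt{k+r})$. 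The relation $\ga t\leq R^2$ then closes the estimate in every sub-case.

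The main obstacle is the fourth piece, which (since $\max(R,R^{1-\al})R^\al=\varphi_\al(R)$) amounts to $\varphi_\al(R)[\na v]_{\al;h}\leq C\ga^{-1}M$. For each pair $X,Y$ with $\rho=|X-Y|$, combine the two pointwise bounds
\beno
|\na u(s,X)-\na u(s,Y)|\leq (h(X)+h(Y))\min\bigl(|\na u(s)|_{0;h},\,\rho[\na u(s)]_{1;h}\bigr).
\eeno
Feeding in the second and third terms of $M$, these two bounds are balanced precisely when $\ga(t-s)=\rho^2$, so I would split the time integral at $s_\ast=\max(0,\,t-\rho^2/\ga)$, using the Lipschitz bound on $(0,s_\ast)$ and the sup bound on $(s_\ast,t)$. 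Under the substitution $r=\ga(t-s)$, the sup-bound piece produces $\int_0^{\min(\rho^2,\ga t)}r^{\al/2-1}\,dr\sim\rho^\al$, whose $\rho^\al$ cancels the $\rho^{-\al}$ coming from the H\"older quotient, while the Lipschitz piece produces $\int_{\rho^2}^{\ga t}r^{-(3-\al)/2}\,dr\sim\rho^{-(1-\al)}$, which pairs with the prefactor $\rho^{1-\al}$.

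The hard part is to extract the correct $\varphi_\al(R)^{-1}$ scaling. This comes from the lower bound $\varphi_\al(\sqrt{k+\ga t-r})\geq c\varphi_\al(R)$, valid on the range $r\leq\ga t/2$ (i.e.\ $s\geq t/2$); the complementary range requires a further subdivision, using $\ga t\leq R^2$ to dominate the resulting $r^{-a}$ integrals. A case analysis according to the relative sizes of $\rho^2$, $\ga t$, and $k$ then makes each contribution transparent. The regime $k\gg\ga t$, which might at first look delicate, actually improves matters because $\varphi_\al(\sqrt{k+r})\sim\varphi_\al(R)$ uniformly for $r\in[0,\ga t]$. Assembling the four bounds yields the stated inequality.
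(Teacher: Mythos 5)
Your proposal is correct and follows essentially the same route as the paper: the norm is split into the same four constituents, the first two are handled by the identity $\int_0^t (\ga s)^{-1/2}(\ga(t-s))^{-1/2}ds=\pi\ga^{-1}$, the gradient sup norm by the same integral estimate yielding $\min((k+\ga t)^{-1/2},(k+\ga t)^{-(1-\al)/2})$, and the gradient H\"older seminorm by interpolating the sup and Lipschitz pointwise bounds with the balance point $\ga(t-s)=|X-Y|^2$ together with a split at $s=t/2$ to extract $\varphi_\al(\sqrt{k+\ga t})^{-1}$. The only cosmetic difference is that you split the time integral explicitly at $s_*$ while the paper carries the $\min$ through a single split at $t/2$; the resulting computations are identical.
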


\begin{proof}
We denote by $C\ga^{-1} A $ the right hand side of the inequality.
Then we have
\beno
\Big|\int_0^tu(s)ds\Big|_{0,\a;h}\leq\int_0^t|u(s)|_{0,\a;h}ds \leq \int_0^t(\ga s)^{-\frac{1}{2}}(\ga (t-s))^{-\frac{1}{2}}dsA\leq C\ga^{-1}A,\eeno\beno\Big|\nabla\int_0^tu(s)ds\Big|_{0;h}\leq\int_0^t|\nabla u(s)|_{0;h}ds \leq \int_0^t\varphi_{\a}(\sqrt{k+\ga s})^{-1}(\ga (t-s))^{-1+\frac{\a}{2}}dsA\\ \leq C\ga^{-1}\min ((k+\ga t)^{-\frac{1}{2}},(k+\ga t)^{-\frac{1-\a}{2}})A.
\eeno
For any $X,Y\in\R^d,$ we have
\beno
&&|\nabla u(s,X)-\nabla u(s,Y)|\leq |X-Y|(h(X)+h(Y))[\nabla u(s)]_{1;h},\\
&&|\nabla u(s,X)-\nabla u(s,Y)|\leq |\nabla u(s,X)|+|\nabla u(s,Y)| \leq (h(X)+h(Y))|\nabla u(s)|_{0;h}.
\eeno
This  gives
\begin{align*}
&|\nabla u(s,X)-\nabla u(s,Y)|\\
&\leq\min((\ga (t-s))^{\frac{1}{2}},|X-Y|)(h(X)+h(Y))\big([\nabla u(s)]_{1;h}+(\ga (t-s))^{-\frac{1}{2}}|\nabla u(s)|_{0;h}\big)\\
&\leq\min((\ga (t-s))^{\frac{1}{2}},|X-Y|)(h(X)+h(Y))\varphi_{\a}(\sqrt{k+\ga s})^{-1}(\ga (t-s))^{-\frac{3-\a}{2}}A,
\end{align*}
therefore,
\begin{align*}
&\Big|\nabla\int_0^tu(s)ds(X)-\nabla\int_0^tu(s)ds(Y)\Big|\leq\int_0^t|\nabla u(s,X)-\nabla u(s,Y)|ds \\
&\leq \int_0^t\min((\ga (t-s))^{\frac{1}{2}},|X-Y|)(h(X)+h(Y))\varphi_{\a}(\sqrt{k+\ga s})^{-1}(\ga (t-s))^{-\frac{3-\a}{2}}Ads\\
&\leq C(h(X)+h(Y)) A\Big(\min((\ga t)^{\frac{1}{2}},|X-Y|)\int_0^{\frac{t}{2}}\varphi_{\a}(\sqrt{k+\ga s})^{-1}ds(\ga t)^{-\frac{3-\a}{2}}\\
&\qquad+\int_{\frac{t}{2}}^t\min((\ga (t-s))^{\frac{1}{2}},|X-Y|)(\ga (t-s))^{-\frac{3-\a}{2}}ds\varphi_{\a}(\sqrt{k+\ga t})^{-1}\Big)\\
&\leq C(h(X)+h(Y)) A\Big((\ga t)^{\frac{1-\a}{2}}|X-Y|^{\a}t\varphi_{\a}(\sqrt{k+\ga t})^{-1}(\ga t)^{-\frac{3-\a}{2}}\\&\qquad+\ga^{-1}|X-Y|^{\a}\varphi_{\a}(\sqrt{k+\ga t})^{-1}\Big)\\
&\leq C\ga^{-1}(h(X)+h(Y)) A|X-Y|^{\a}\varphi_{\a}(\sqrt{k+\ga t})^{-1}.
\end{align*}
Summing up, we deduce our result.\end{proof}

\begin{lemma}\label{lem:comp1}
Let $\Phi$ be a map from $\R^d$ to $\R^d$ with $\na \Phi\in C^{0,\al}(\R^d)$.
It holds that
\beno
&&|u\circ\Phi|_{0,\a;h\circ\Phi,R}\leq |u|_{0,\a;h,R}\max\big(|\nabla\Phi|_{0}^{\a},1\big),\\
&&|u\circ\Phi|_{1,\a;h\circ\Phi,R}\leq |u|_{1,\a;h,R}\max\big(|\nabla\Phi|_{0}^{\a},1\big)\max\big(|\nabla\Phi|_{0,\a;1,R},1\big).
\eeno
\end{lemma}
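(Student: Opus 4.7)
The plan is to mirror the proof of Lemma~\ref{lem:comp}, tracking carefully how the factors of $R$ enter the scaled norms. First I would handle the $C^{0,\alpha}$ estimate directly from the definitions. The zero-order part is immediate: $|u\circ\Phi|_{0;h\circ\Phi}=\sup_X|u(\Phi(X))/h(\Phi(X))|\leq|u|_{0;h}$. For the Hölder seminorm I would write
\begin{align*}
\frac{|u(\Phi(X))-u(\Phi(Y))|}{(h(\Phi(X))+h(\Phi(Y)))|X-Y|^{\alpha}}
=\frac{|u(\Phi(X))-u(\Phi(Y))|}{(h(\Phi(X))+h(\Phi(Y)))|\Phi(X)-\Phi(Y)|^{\alpha}}\cdot\frac{|\Phi(X)-\Phi(Y)|^{\alpha}}{|X-Y|^{\alpha}},
\end{align*}
and use the mean value estimate $|\Phi(X)-\Phi(Y)|\leq|\nabla\Phi|_0|X-Y|$ to get $[u\circ\Phi]_{\alpha;h\circ\Phi}\leq|\nabla\Phi|_0^{\alpha}[u]_{\alpha;h}$. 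Summing the two contributions with the appropriate $R^{\alpha}$ weight and factoring out $\max(|\nabla\Phi|_0^{\alpha},1)$ gives the first inequality.

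For the $C^{1,\alpha}$ estimate I would use the chain rule $\nabla(u\circ\Phi)=((\nabla u)\circ\Phi)\nabla\Phi$ and treat this as a product of a composition and $\nabla\Phi$ itself. The first inequality (already proved), applied with $u$ replaced by $\nabla u$, controls
\begin{align*}
|(\nabla u)\circ\Phi|_{0,\alpha;h\circ\Phi,R}\leq|\nabla u|_{0,\alpha;h,R}\max(|\nabla\Phi|_0^{\alpha},1).
\end{align*}
Then I would invoke the scaled-norm analog of Lemma~\ref{lem:product} (product rule), which the authors remark holds for these weighted spaces, with weights $h\circ\Phi$ and $1$, obtaining
\begin{align*}
|\nabla(u\circ\Phi)|_{0,\alpha;h\circ\Phi,R}\leq C|(\nabla u)\circ\Phi|_{0,\alpha;h\circ\Phi,R}\,|\nabla\Phi|_{0,\alpha;1,R}.
\end{align*}
Multiplying by $\max(R,R^{1-\alpha})$ and adding the $C^{0,\alpha}$ estimate from the first part of the lemma yields
\begin{align*}
|u\circ\Phi|_{1,\alpha;h\circ\Phi,R}\leq\max(|\nabla\Phi|_0^{\alpha},1)\max(|\nabla\Phi|_{0,\alpha;1,R},1)\bigl(|u|_{0,\alpha;h,R}+\max(R,R^{1-\alpha})|\nabla u|_{0,\alpha;h,R}\bigr),
\end{align*}
which is the claimed bound.

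The main obstacle I anticipate is verifying the scaled product rule cleanly. One has to check that the natural decomposition $[fg]_{\alpha;h_1h_2}\leq[f]_{\alpha;h_1}|g|_{0;h_2}+|f|_{0;h_1}[g]_{\alpha;h_2}$ combines correctly with the factor $R^{\alpha}$ (and with the $\max(R,R^{1-\alpha})$ on the gradient terms) so that nothing worse than a universal constant appears; here the condition \eqref{ass:W1} on the weight is what guarantees $h(\Phi(X))\sim h(\Phi(Y))$ at scale $R$ after composition. Once this bookkeeping is confirmed, all remaining steps are purely routine comparisons of suprema, and the lemma follows by substitution.
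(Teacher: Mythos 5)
Your overall strategy --- chain rule plus a product estimate, mirroring Lemma \ref{lem:comp} --- is the intended argument (the paper omits the proof of this lemma entirely), and your first inequality is proved correctly. But two points in the second half need repair, because the lemma is stated \emph{without} a multiplicative constant and \emph{without} any hypothesis on the weight $h$. First, the general scaled product rule you invoke does carry a constant $C=C(c_0)$ and does require \eqref{ass:W1}, precisely because of the cross terms $h_1(X)h_2(Y)$; your closing paragraph acknowledges this worry but resolves it the wrong way. The correct observation is that here one factor, $\na\Phi$, carries the \emph{constant} weight $1$, so no constant and no weight condition are needed: writing $\na(u\circ\Phi)=\na\Phi\cdot\big((\na u)\circ\Phi\big)$ and splitting the difference in the usual way gives
\begin{align*}
R^{\al}\,[\na(u\circ\Phi)]_{\al;h\circ\Phi}\le |\na\Phi|_{0}^{1+\al}\,R^{\al}[\na u]_{\al;h}+R^{\al}[\na\Phi]_{\al}\,|\na u|_{0;h},
\end{align*}
since $h(\Phi(Y))\le h(\Phi(X))+h(\Phi(Y))$; combined with $|\na(u\circ\Phi)|_{0;h\circ\Phi}\le|\na\Phi|_{0}|\na u|_{0;h}$, this factors exactly as $\max(|\na\Phi|_{0}^{\al},1)\,|\na\Phi|_{0,\al;1,R}\,|\na u|_{0,\al;h,R}$, with no extraneous $C$.

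Second, the zeroth-order part of $|\cdot|_{1,\al;h,R}$ is the \emph{unscaled} norm $|u|_{0,\al;h}$, not $|u|_{0,\al;h,R}$. You should therefore bound $|u\circ\Phi|_{0,\al;h\circ\Phi}$ by the first inequality of Lemma \ref{lem:comp}, which gives $|u|_{0,\al;h}\max(|\na\Phi|_{0}^{\al},1)$, rather than by the $R$-scaled inequality you just proved: the comparison $|u\circ\Phi|_{0,\al;h\circ\Phi}\le|u\circ\Phi|_{0,\al;h\circ\Phi,R}$ fails for $R<1$, and even for $R\ge 1$ it lands you on $|u|_{0,\al;h,R}$, which exceeds the $|u|_{0,\al;h}$ appearing in the definition of $|u|_{1,\al;h,R}$, so your final display proves a weaker bound than the one claimed. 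With these two corrections the final factoring goes through and yields the stated inequality exactly.
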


\subsection{Symmetric extension}

Let $\Omega=\R^{d-1}\times [0,1]$ be a strip and $X=(x,y), x\in \R^{d-1}, y\in [0,1]$ be a point in $\Om$.

Let $T_e$ be an even extension from ${C({\Om})}$ to $C(\R^{d})$ defined by
\beno
T_ef(x,2n+y)=T_ef(x,2n-y)=f(x,y)
\eeno
for $x\in \R^{d-1}, y\in [0,1], n\in \Z.$
Let $T_o$ be an odd extension from $C_0(\Om)=\big\{u\in C({\Om}), u=0 \text{ on } \pa\Om\big\}$ to $C(\R^{d})$ defined by
\beno
T_of(x,2n-y)=-f(x,y),\quad T_of(x,2n+y)=f(x,y)
\eeno
for $x\in \R^{d-1}, y\in [0,1], n\in \Z.$

\begin{lemma}\label{lem:extension}
It holds that
\beno
&&|T_ef|_{0,\al}=|f|_{0,\al,\Om},\\
&&|f|_{0,\al;\Om}\le |T_of|_{0,\al}\le 2|f|_{0,\al;\Om}.
\eeno
The same result holds for the weighted H\"{o}lder norm $|\cdot|_{0,\al;h}$ if the weight function $h(X)$ depends only on $x$.
\end{lemma}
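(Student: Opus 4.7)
The plan is to base the whole argument on two elementary facts about the piecewise-linear reflection map $\rho:\R\to[0,1]$ (which folds $\R$ onto $[0,1]$ by successive reflections across the integers) and the associated sign $\epsilon:\R\to\{\pm1\}$ which records the parity of the fold: (i) $\rho$ is $1$-Lipschitz, $|\rho(y_1)-\rho(y_2)|\le|y_1-y_2|$, with equality whenever $y_1,y_2$ lie in a common interval $[n,n+1]$; (ii) $T_of$ vanishes on the hyperplanes $\{y\in\Z\}$, because $f|_{\pa\Om}=0$.

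For the even extension we have $T_ef(x,y)=f(x,\rho(y))$, so for any $X=(x_1,y_1)$, $Y=(x_2,y_2)\in\R^d$ the folded points $X'=(x_1,\rho(y_1))$, $Y'=(x_2,\rho(y_2))$ lie in $\Om$ and satisfy $|X'-Y'|\le|X-Y|$ by (i). Hence
\beno
|T_ef(X)-T_ef(Y)|=|f(X')-f(Y')|\le[f]_{\al;\Om}|X'-Y'|^{\al}\le[f]_{\al;\Om}|X-Y|^{\al}.
\eeno
Together with the trivial equality of sup norms and the reverse inequality obtained by restriction, this gives $|T_ef|_{0,\al}=|f|_{0,\al;\Om}$.

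For the odd extension, write $T_of(x,y)=\epsilon(y)f(x,\rho(y))$ and split the analysis. If $y_1,y_2$ lie in a common reflection interval $[n,n+1]$, then $\epsilon$ is constant on the segment and $\rho$ is an isometry on the $y$-component, so the even-case computation directly yields $|T_of(X)-T_of(Y)|\le[f]_{\al;\Om}|X-Y|^{\al}$. Otherwise some integer $n$ lies between $y_1$ and $y_2$; then by (ii) combined with the H\"{o}lder regularity of $f$ at the boundary,
\beno
|T_of(X)|\le[f]_{\al;\Om}\,d(y_1,\Z)^{\al},\qquad |T_of(Y)|\le[f]_{\al;\Om}\,d(y_2,\Z)^{\al},
\eeno
and the triangle inequality $|T_of(X)-T_of(Y)|\le|T_of(X)|+|T_of(Y)|$, together with $d(y_1,\Z)+d(y_2,\Z)\le|y_1-y_2|$ and the subadditivity $a^{\al}+b^{\al}\le 2(a+b)^{\al}$, produces the bound $2[f]_{\al;\Om}|X-Y|^{\al}$. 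The restriction of $T_of$ to $\Om$ returns $f$ and gives the matching lower bound $|f|_{0,\al;\Om}\le|T_of|_{0,\al}$.

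The only technical subtlety is precisely the factor $2$ in the odd case: one cannot directly compare $T_of(X)$ and $T_of(Y)$ across a reflection hyperplane because the sign $\epsilon$ jumps, and instead one must route through a zero of $T_of$ on the hyperplane $\{y=n\}$, which is exactly where the boundary condition $f=0$ on $\pa\Om$ enters in an essential way. The weighted inequalities reduce to the unweighted ones with no extra work, since the hypothesis that $h$ depends only on the tangential variable $x$ gives $h(X')=h(X)$ for the folded point $X'=(x,\rho(y))$, so the weight passes through both $T_e$ and $T_o$ unchanged in every estimate above.
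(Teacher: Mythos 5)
Your proof is correct and follows essentially the same route as the paper: both use the fold map $\rho$ (the paper's $\rho_0(y)=\inf_n|y-2n|$) together with its $1$-Lipschitz property for the even case, and for the odd case both split according to whether the two points lie in a common reflection cell, routing through the zero of $T_of$ on the intervening hyperplane (where $f|_{\pa\Om}=0$ enters) to produce the factor $2$. The only cosmetic difference is that the paper bounds $|X-X'|^{\al}$ and $|Y-Y'|^{\al}$ each by $|X-Y|^{\al}$ rather than invoking $d(y_1,\Z)+d(y_2,\Z)\le|y_1-y_2|$ and subadditivity, which amounts to the same estimate.
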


\begin{proof}
First of all, it is obvious that
\beno
|f|_{0,\al;\Om}\leq |T_ef|_{0,\al},\quad
|f|_{0,\al;\Om}\le |T_of|_{0,\al},
\eeno
and the same is true for the weighted H\"{o}lder norm $|\cdot|_{0,\al;h}$.
We denote
\beno
&&\rho_0(y)=\inf\limits_{n\in\Z}|y-2n|\in [0,1]\quad \text{for}\ y\in\R,\\
&&\rho(X)=(x,\rho_0(y))\in \Omega\quad\text{for}\ X=(x,y)\in\R^{d},
\eeno
and let
\beno
 \Omega_{+}=\bigcup\limits_{n\in\Z}\R^{d-1}\times [2n,2n+1],\quad  \Omega_{-}=\bigcup\limits_{n\in\Z}\R^{d-1}\times [2n-1,2n].
\eeno
Then it is easy to see that
\beno
&&T_ef=f\circ\rho,\\
&&T_of=f\circ\rho\quad \text{in}\ \Omega_{+},
\quad T_of=-f\circ\rho\quad \text{in} \ \Omega_{-},\\
&&|\rho_0(y)-\rho_0(y')|\leq |y-y'|,\quad |\rho(X)-\rho(Y)|\leq |X-Y|,
\eeno
from which, it follows that
\beno
&&|T_ef|_{0,\al}\leq|f|_{0,\al;\Om} ,\quad |T_ef|_{0,\al;h}\leq|f|_{0,\al;h,\Om},\\&& |T_of|_{0}\leq |f|_{0;\Om},\quad |T_of|_{0;h}\leq |f|_{0;h,\Om}.
\eeno

Given $X=(x,y),\ Y=(x',y')\in \R^{d}$ with $y\leq y'$, if $X,Y\in \Omega_{+}$ or $X,Y\in \Omega_{-}$, then
\begin{align*}
|T_of(X)-T_of(Y)|&=|f\circ\rho(X)-f\circ\rho(Y)|\\
&\leq |f|_{0,\a;h,\Omega}(h\circ\rho(X)+h\circ\rho(Y))|\rho(X)-\rho(Y)|^{\a}\\
&\leq |f|_{0,\a;h,\Omega}(h(X)+h(Y))|X-Y|^{\a}.
\end{align*}
Here we used $h\circ\rho(X)=h(X)$.
Otherwise, there exists $y_1,y_2\in\Z$ so that $y_1-1\leq y\leq y_1\leq y_2\leq y'\leq y_2+1$. Let $X'=(x,y_1),\ Y'=(x',y_2)$. Then for $f\in C_0(\Omega)$, we have
\begin{align*}
|T_of(X)|=|f\circ\rho(X)|=&|f\circ\rho(X)-f\circ\rho(X')|\\
\leq& |f|_{0,\a;h,\Omega}(h\circ\rho(X)+h\circ\rho(X'))|\rho(X)-\rho(X')|^{\a}\\\leq& 2|f|_{0,\a;h,\Omega}h(X)|X-X'|^{\a}.
\end{align*}
Similarly, we have
\beno
|T_of(Y)|\leq 2|f|_{0,\a;h,\Omega}h(Y)|Y-Y'|^{\a}.
\eeno
Then, using $|X-X'|+|Y-Y'|\leq |X-Y|$, we get
\beno
|T_of(X)-T_of(Y)|\leq 2|f|_{0,\a;h,\Omega}(h(X)+h(Y))|X-Y|^{\a}.
\eeno
This shows  $[T_of]_{\a;h}\leq 2[f]_{\a;h,\Omega}$. Similarly, $[T_of]_{\a}\leq 2[f]_{\a;\Omega}$.
\end{proof}

\section{Global well-posedness for the ideal MHD equations}

This section is devoted to the proof of global well-posedness of
the ideal MHD equations in $\R^{d-1}\times [0,1]$ with the boundary condition \eqref{bc:zero}. Recall that in terms of the Elsasser variables $z_\pm=Z_\pm\pm B_0$, the ideal MHD equations take as follows
\begin{align}\label{eq:MHD-ideal}
\left\{
\begin{array}{l}
\partial_t z_++Z_-\cdot\nabla z_+=-\nabla p,\\
\partial_t z_-+Z_+\cdot\nabla z_-=-\nabla p,\\
\text{div} z_+=\text{div}z_-=0,\\
z_\pm^d(t,x,y)=0\quad \text{on}\quad y=0,1.
\end{array}\right.
\end{align}

Without loss of generality, we take the background magnetic field $B_0=(1,0,\cdots,0)$.

\subsection{Main result}

Let $f(x, y)=f_0(x_1)$, where $f_0\in C^1(\R)$ is chosen so that $|f_0'|<f_0<1$ and for some $C_1>0$,
\ben\label{ass:f}
\begin{split}
 &\delta(T)\triangleq\sup_{Y\in \R^d}\int_{-T}^{T}f(Y+ 2B_0t)dt {\leq C_1}\quad\text{for any}\quad T>0,\\
 &\int_{\R^d}\frac{f(Y)}{1+|X-Y|^{d+1}}dY\leq C_1f(X)\quad \text{for any}\quad X\in\R^d,\\
 &f(X)\leq 2f(Y)\quad\text{for any}\quad |X-Y|\leq 2.
 \end{split}
 \een
In fact, $f_0(r)=(C_0+r^2)^{-{\frac{\delta+1}{2}}}$ satisfies the above conditions for some $C_0>1$ and  ${0<\delta<1}$.

Now we introduce the weight function $f_\pm(t,X)$ given by
\beno
f_\pm(t,X)\tre f(X\pm B_0t),
\eeno
which satisfies \eqref{ass:W} with a uniform constant $c_0$ independent of $t$. Let
\beno
M_\pm(t)\triangleq \sup_{|s|\le t}|z_\pm(s)|_{1,\al;f_\pm(s),\Om}.
\eeno

Main result of this section is stated as follows.

\begin{theorem}\label{thm:ideal}
Let $\al\in (0,1)$.
There exists $\epsilon>0$ so that if $M_\pm(0)\le \epsilon$,
then there exists a global in time unique solution $(z_+,z_-)\in L^\infty\big(0,+\infty;C^{1,\al}(\Om)\big)$ with the pressure $p$ determined by (\ref{eq:pressure}) to  the ideal MHD equations (\ref{eq:MHD-ideal}), which  satisfies
\beno
M_\pm(t)\le C\epsilon\quad \text{for any}\quad t\in [0,+\infty).
\eeno
\end{theorem}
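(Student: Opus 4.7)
The plan is a Lagrangian bootstrap in the weighted spaces $C^{1,\alpha}_{f_\pm(t)}$, built on the symmetric extension of Section~2. The core mechanism is that the two Els\"{a}sser fields travel in opposite directions along $B_0$, so the time integral of the quadratic pressure term along $Z_\mp$-characteristics is automatically tame once the weights follow those motions.

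\textbf{Step 1 (setup).} First I would extend $z_\pm$ from $\Omega$ to $\R^d$ using $T_e$ on the horizontal components, $T_o$ on the vertical component, and $T_e$ on $p$. Since the weight $f$ depends only on $x_1$, Lemma~\ref{lem:extension} makes these isomorphisms of the weighted H\"{o}lder spaces. On $\R^d$ define the Lagrangian flows $\Phi_\pm$ by $\partial_t\Phi_\pm(t,X)=Z_\mp(t,\Phi_\pm(t,X))$ with $\Phi_\pm(0)=\mathrm{id}$. Because $Z_\mp=z_\mp\mp B_0$ with $z_\mp$ small, $\Phi_-(t,X)$ stays $O(M_-(t))$-close to $X-B_0 t$; combining Lemma~\ref{lem:comp} with the slow variation of $f$ in $(\ref{ass:f})$ gives $f_+(t)\circ\Phi_-(t,\cdot)\sim f$, so weighted H\"{o}lder norms transfer naturally through $\Phi_-$, and analogously through $\Phi_+$.

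\textbf{Step 2 (transport identity and pressure bound).} Along these flows the Els\"{a}sser equations integrate to
\[
z_+(t,\Phi_-(t,X))=z_+(0,X)-\int_0^t(\nabla p)(s,\Phi_-(s,X))\,ds,
\]
and symmetrically for $z_-$ along $\Phi_+$. Taking the divergence of the first equation and using $\dive z_\pm=0$ produces $-\Delta p=\partial_j z_-^i\,\partial_i z_+^j$ on the extended domain, so schematically $\nabla p=R[\partial z_+\cdot\partial z_-]$ for a Riesz-type operator $R$. The second condition in $(\ref{ass:f})$ is designed precisely so that $R$ is bounded on $C^{0,\alpha}_{f_+(s)f_-(s)}$, and Lemma~\ref{lem:product} controls the quadratic source by $M_+(s)M_-(s)$ in this product weight. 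Evaluating at $\Phi_-(s,X)\approx X-B_0 s$ and using $f_+(s,X-B_0 s)=f(X)$ with $f_-(s,X-B_0 s)=f(X-2B_0 s)$, the first condition in $(\ref{ass:f})$ delivers the decisive estimate
\[
\int_0^t\bigl|\nabla p(s,\Phi_-(s,X))\bigr|\,ds\le C f(X)\,M_+(t)M_-(t)\int_0^t f(X-2B_0 s)\,ds\le CC_1\,f(X)\,M_+(t)M_-(t).
\]

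\textbf{Step 3 (closure and main obstacle).} Inserting this into the transport identity yields $|z_+(t)|_{0;f_+(t),\Omega}\le M_+(0)+CC_1 M_+(t)M_-(t)$, and the full $C^{1,\alpha}$ bound follows by differentiating the equation in space and iterating the same scheme, where the third condition in $(\ref{ass:f})$ together with Lemmas~\ref{lem:product}--\ref{lem:comp} handle the commutator and composition terms. A standard continuity argument then produces $M_\pm(t)\le C\epsilon$ for all $t\ge 0$ once $\epsilon$ is small, and local-in-time $C^{1,\alpha}$ existence is classical. The genuine technical obstacle is the weighted H\"{o}lder bound on $\nabla p$ itself: the oddly extended $z_\pm^d$ lies only in $C^{0,\alpha}(\R^d)$ and not in $C^{1,\alpha}(\R^d)$, so the loss of normal regularity at $y=0,1$ forbids a naive singular-integral estimate. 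This forces the explicit representation formula (\ref{eq:pressure}) mentioned in the introduction, in which the boundary cancellation of the bilinear source $\partial_j z_-^i\,\partial_i z_+^j$ restores $\nabla p\in C^{1,\alpha}$, together with the weighted boundedness of Riesz-type operators, which is the principal technical input of Section~3.
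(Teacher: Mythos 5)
Your proposal follows essentially the same route as the paper: the a priori estimate rests on transporting $z_\pm$ along the $Z_\mp$-characteristics, which stay within distance $2$ of $X\mp B_0t$, so that the time integral of the weight along the flow is controlled by $\delta(T)\le C_1$ from (\ref{ass:f}) (this is Proposition \ref{prop:tran-HW} combined with Lemma \ref{lem:weight}), and the pressure is handled by the representation formula (\ref{eq:pressure}) after symmetric extension, together with weighted bounds for the resulting integral operators (Lemmas \ref{lem:integral} and \ref{lem:extension-p}); you also correctly identify the loss of $C^{1,\al}$ regularity of the odd extension as the reason the representation formula is needed. Two caveats. First, your displayed inequality implicitly uses the pointwise bound $|\nabla p(s)|\lesssim f_+(s)f_-(s)\,M_+M_-$, i.e.\ that the Riesz-type operator preserves the product weight $f_+f_-$; the paper instead lands in the larger weight $g(s,X)=\int_{\R^d}(f_+f_-)(s,Y)\,(1+|X-Y|^{d+1})^{-1}dY$ and then closes via the integrated estimate $\int_{-T}^{T}g(t,X\pm B_0t)\,dt\le C\delta(T)f(X)$ of Lemma \ref{lem:weight}; your conclusion is recoverable, but the intermediate pointwise step should be replaced by this integrated one. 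Second, the appeal to ``classical local existence plus a continuity argument'' is precisely the step the paper flags as unavailable: there is no off-the-shelf well-posedness theory in these weighted H\"{o}lder spaces, and the paper instead constructs the global solution directly by iterating the linear transport problem with source $-I(z_+^{(n)},z_-^{(n)})$, proving uniform weighted $C^{1,\al}$ bounds by induction (using $4C_1A_1\delta(T)<1$ for all $T$), contraction in $C^{0,\al}$, and a separate argument that the divergence-free condition propagates. Your a priori estimates are exactly the right input for that scheme, but the construction itself still has to be carried out.
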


\begin{remark}
Thanks to $M_\pm(0)\sim \big|z_\pm(0)\langle x_1\rangle^{1+\delta}\big|_{1,\al;\Om}$ if $f_0(r)=(C_0+r^2)^{-\frac{\delta+1}{2}}$, this means that the initial data decays at infinity only in one direction. This is a key point for the
global well-posedness in the strip domain,
especially in $\R\times [0,1]$.
\end{remark}

We conclude this subsection by introducing some properties
of weight functions.  Let
\beno
g(t,X)\tre\int_{\R^d}\frac{f(Y+B_0t)f(Y-B_0t)}{1+|X-Y|^{d+1}}dY.
\eeno
We have the following important facts.

\begin{lemma}\label{lem:weight}
There exists a constant $C>0$ so that for any $X\in \R^d, t\in \R$,
\beno
&&f(X+B_0t)f(X-B_0t)\leq Cg(t,X),\\
&&g(t,X)\leq C\big(1+|X-Y|\big)^{d+1}g(t,Y),\\
&&\int_{-T}^{T}g(t,X\pm B_0t)dt\leq C\delta(T)f(X).
\eeno
\end{lemma}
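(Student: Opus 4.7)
The plan is to prove each of the three inequalities directly from the definition of $g(t,X)$ as a convolution-like integral, using the three properties \eqref{ass:f} of $f$ together with some elementary triangle-inequality bookkeeping on the kernel $(1+|\cdot|^{d+1})^{-1}$.

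For the first inequality, I would restrict the integral defining $g(t,X)$ to the region $|X-Y|\le 1$. On this set, the third condition in \eqref{ass:f}, applied to the pairs $(X+B_0t,Y+B_0t)$ and $(X-B_0t,Y-B_0t)$ whose distances both equal $|X-Y|\le 2$, gives $f(Y\pm B_0t)\ge\tfrac12 f(X\pm B_0t)$, hence $f(Y+B_0t)f(Y-B_0t)\ge\tfrac14 f(X+B_0t)f(X-B_0t)$. Since the remaining spatial integral $\int_{|X-Y|\le 1}(1+|X-Y|^{d+1})^{-1}dY$ is a positive constant independent of $X$ and $t$, this yields the claimed lower bound.

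For the second inequality, I would compare the integrands of $g(t,X)$ and $g(t,Y)$ pointwise in the integration variable $Z$. By the triangle inequality $|Y-Z|\le |X-Y|+|X-Z|$ one obtains $1+|Y-Z|^{d+1}\le C(1+|X-Y|)^{d+1}(1+|X-Z|^{d+1})$, so that
\beno
\frac1{1+|X-Z|^{d+1}}\le C(1+|X-Y|)^{d+1}\frac1{1+|Y-Z|^{d+1}}.
\eeno
Integrating against $f(Z+B_0t)f(Z-B_0t)\,dZ$ gives the result.

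The third inequality is the heart of the lemma and is where both non-pointwise conditions on $f$ in \eqref{ass:f} are used in tandem. For the ``$+$'' case, I would apply Fubini and perform the substitution $Z=Y-B_0t$, which turns $X+B_0t-Y$ into $X-Z$ and the weight pair into $f(Z+2B_0t)f(Z)$, producing
\beno
\int_{-T}^T g(t,X+B_0t)\,dt=\int_{\R^d}\frac{f(Z)}{1+|X-Z|^{d+1}}\Big(\int_{-T}^T f(Z+2B_0t)\,dt\Big)dZ.
\eeno
The inner integral is bounded by $\delta(T)$ by the first assumption in \eqref{ass:f}, uniformly in $Z$, and the remaining spatial integral is bounded by $C_1f(X)$ by the second assumption. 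The ``$-$'' case is handled symmetrically via the substitution $Z=Y+B_0t$ combined with $t\mapsto -t$ in the resulting time integral, which reduces it to the same form. I do not foresee any serious obstacle; the one place requiring mild care is the bookkeeping in the change of variables so that the kernel, the two $f$ weights, and the $B_0t$ translations decouple cleanly into a product of a spatial integral and a time integral before Fubini is applied.
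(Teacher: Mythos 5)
Your proposal is correct and follows essentially the same route as the paper's proof: restricting the integral to a ball around $X$ and using $f(Y)\ge f(X)/2$ for the lower bound, the kernel comparison $\frac{1}{1+|X-Z|^{d+1}}\le C\frac{1+|X-Y|^{d+1}}{1+|Y-Z|^{d+1}}$ for the second inequality, and the change of variables plus Fubini reducing the third inequality to the first two assumptions in \eqref{ass:f}. No gaps.
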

\begin{proof}
Thanks to $f(Y)\geq f(X)/2$ for $|X-Y|<2$,  we get
\begin{align*}
g(t,X)&\geq\int_{B(X,2)}\frac{f(Y+B_0t)f(Y-B_0t)}{1+|X-Y|^{d+1}}dY\\
&\geq\f 14\int_{B(X,2)}\frac{f(X+B_0t)f(X-B_0t)}{1+|X-Y|^{d+1}}dY\\
&\geq C^{-1}{f(X+B_0t)f(X-B_0t)},
\end{align*}
which gives the first inequality.

Using the inequality $\dfrac{1}{1+|X-Z|^{d+1}}\leq C\dfrac{1+|X-Y|^{d+1}}{1+|Y-Z|^{d+1}}$, we infer
\begin{align*}
g(t,X)=&\int_{\R^d}\frac{f(Z+B_0t)f(Z-B_0t)}{1+|X-Z|^{d+1}}dZ\\
\leq& C\int_{\R^d}\frac{f(Z+B_0t)f(Z-B_0t)}{1+|Y-Z|^{d+1}}(1+|X-Y|^{d+1}) dY\\
=& C\big(1+|X-Y|^{d+1}\big)g(t,Y),
\end{align*}
which gives the second inequality.

Make a change of variable
\beno
g(t,X+B_0t)=\int_{\R^d}\frac{f(Y+B_0t)f(Y-B_0t)}{1+|X+B_0t-Y|^{d+1}}dY =\int_{\R^d}\frac{f(Y+2B_0t)f(Y)}{1+|X-Y|^{d+1}}dY,
\eeno
which along with (\ref{ass:f})  gives
\begin{align*}
\int_{-T}^{T}g(t,X+B_0t)=&\int_{\R^d}\frac{\int_{-T}^{T}f(Y+2B_0t)f(Y)dt}{1+|X-Y|^{d+1}}dY\\
\leq& C\int_{\R^d}\frac{\delta(T)f(Y)}{1+|X-Y|^{d+1}} dY \leq C \delta(T)f(X).
\end{align*}
Similarly, we have
\beno
\int_{-T}^{T}g(t,X-B_0t) \leq C \delta(T)f(X).
\eeno
This completes the proof of the lemma.
\end{proof}

\subsection{Weighted $C^{1,\al}$ estimate for the transport equation}

Let $Z\in C^1\big([0,T]\times {\Om}\big)$ be a vector field with
$Z^d=0$ on $\pa\Om$. We introduce the characteristic associated with $Z$:
\ben
\frac { d} {dt}\Phi(s,t,X)=Z(t,\Phi(s,t,X)),\quad  \Phi(s,s,X)=X.
\een
Then $\Phi(s,t,X)\in C^1\big([0,T]\times[0,T]\times {\Om}\big)$
is a diffeomorphism from $\Om$ to $\Om$ and $\pa\Om$ to $\pa\Om$
having the property
\beno
\Phi(r,t)\circ \Phi(s,r)=\Phi(s,t),\quad \Phi(s,s)=Id.
\eeno

\begin{lemma}\label{lem:char}
If $Z(t,X)$ satisfies the extra condition
\begin{align}\label{eq:Z-cond}
\ |\nabla Z|_{0,\a;h,{\Omega}, T}\int_{t_0}^Th\big(t, \Phi(T,t,X)\big) dt\leq A_0\quad\text{for any}\quad  X\in {\Omega},
\end{align}
then it holds that for $0\underline{\leq t_0}\leq t\leq s<T$
\beno
&& |\nabla \Phi(s,t)-Id|_{0;{\Omega}}\leq e^{A_0}-1,\\
&&|\nabla \Phi(s,t)|_{0;{\Omega}}\leq e^{A_0},\\
&&[\nabla \Phi(s,t)]_{\a;{\Omega}}\leq 2A_0e^{(2+\a)A_0}.
\eeno
\end{lemma}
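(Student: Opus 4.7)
The plan is to derive a matrix ODE for $F(t,X)\eqdef \nabla_X\Phi(s,t,X)$ and run weighted Gronwall estimates on it. Differentiating the defining ODE of $\Phi$ in $X$ gives
$$\frac{d}{dt}F(t,X)=\nabla Z\bigl(t,\Phi(s,t,X)\bigr)\,F(t,X),\qquad F(s,X)=I,$$
which I would integrate backwards on $[t,s]\subseteq[t_0,T)$. The preparatory step is to convert the hypothesis \eqref{eq:Z-cond}, which controls the weight along trajectories anchored at the endpoint $T$, into a bound along trajectories anchored at the intermediate time $s$. For this I use the flow identity $\Phi(s,r,X)=\Phi\bigl(T,r,\Phi(s,T,X)\bigr)$, which follows because both sides solve the same ODE in $r$ and coincide at $r=T$. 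Applying \eqref{eq:Z-cond} at the point $Y=\Phi(s,T,X)\in\Omega$ and writing $M\eqdef|\nabla Z|_{0,\a;h,\Omega,T}$, one obtains
$$M\int_{t}^{s} h\bigl(r,\Phi(s,r,X)\bigr)\,dr\leq A_0$$
for every $X\in\Omega$ and every $t_0\leq t\leq s<T$, which is the form used throughout.

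With this in hand, the first two inequalities fall out of a standard Gronwall argument applied to $B\eqdef F-I$. From the integrated variational equation,
$$|B(t,X)|\leq\int_{t}^{s} M\,h\bigl(r,\Phi(s,r,X)\bigr)\bigl(1+|B(r,X)|\bigr)\,dr,$$
so backward Gronwall yields $1+|B(t,X)|\leq e^{A_0}$, hence $|\nabla\Phi(s,t)-\mathrm{Id}|_{0;\Omega}\leq e^{A_0}-1$ and $|\nabla\Phi(s,t)|_{0;\Omega}\leq e^{A_0}$. As a byproduct, $\Phi(s,r,\cdot)$ is Lipschitz on $\Omega$ with constant $e^{A_0}$ for every $r\in[t_0,T)$; this will be needed next.

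For the H\"older seminorm I would form the variational equation for $F(t,X)-F(t,Y)$ and split the right-hand side into (a) the difference $\nabla Z(r,\Phi(s,r,X))-\nabla Z(r,\Phi(s,r,Y))$ multiplied by $F(r,X)$, and (b) $\nabla Z(r,\Phi(s,r,Y))$ multiplied by $F(r,X)-F(r,Y)$. Term (a) is controlled by the weighted H\"older seminorm of $\nabla Z$ combined with the Lipschitz bound on $\Phi$ above, producing an integrand at most $[\nabla Z]_{\a;h}\bigl(h(r,\Phi(s,r,X))+h(r,\Phi(s,r,Y))\bigr)e^{\a A_0}|X-Y|^{\a}\cdot e^{A_0}$; time integration against the weight bound contributes $2A_0 e^{(1+\a)A_0}|X-Y|^\a$. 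Term (b) is absorbed by a second backward Gronwall, which costs an extra factor $e^{A_0}$, producing the stated $2A_0 e^{(2+\a)A_0}|X-Y|^{\a}$.

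The main obstacle is the weight bookkeeping addressed in the first paragraph: the hypothesis is pinned at the endpoint $T$, while the variational equation naturally lives along the flow based at the intermediate time $s$; without the flow-identity reduction the weighted norm of $\nabla Z$ does not close against the variational equation. Property \eqref{ass:W} enters implicitly through the definition of the weighted H\"older seminorm of $\nabla Z$, but no further local comparability of $h$ is needed beyond what is already built into that norm.
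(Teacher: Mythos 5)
Your proposal is correct and follows essentially the same route as the paper: the same variational equation for $\nabla\Phi(s,t)$, the same reduction of hypothesis \eqref{eq:Z-cond} via the flow identity $\Phi(s,r)=\Phi(T,r)\circ\Phi(s,T)$, the same backward Gronwall for the first two bounds, and the same two-term splitting plus second Gronwall for the H\"older seminorm, yielding the identical constants $2A_0e^{(1+\a)A_0}$ and then $2A_0e^{(2+\a)A_0}$.
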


\begin{proof}
Thanks to the definition of $\Phi(s,t)$, we have
\beno
&& \partial_t\nabla\Phi(s,t)=\nabla\Phi(s,t)((\nabla Z(t))\circ\Phi(s,t)),\\
&&\Phi(s,s)=Id,\quad \nabla\Phi(s,s)=Id,\\
&&\ |\nabla\Phi(s,t)|\leq |\nabla\Phi(s,t)-Id|+1.
\eeno
Here $|\na \Phi(s,t)|$ is the matrix norm defined by (\ref{def:matrix}).
Therefore,
\begin{align*}
|\nabla\Phi(s,t)-Id|\leq& \int_t^s|\partial_r\nabla\Phi(s,r)|dr\\
\leq& \int_t^s|\nabla\Phi(s,r)||(\nabla Z(r))\circ\Phi(s,r)|dr\\
\leq& \int_t^s|(\nabla Z(r))\circ\Phi(s,r)|dr+\int_t^s|\nabla\Phi(s,r)-Id||(\nabla Z(r))\circ\Phi(s,r)|dr,
\end{align*}
which implies
\beno
|\nabla\Phi(s,t)-Id|\leq \exp\Big(\int_t^s|(\nabla Z(r))\circ\Phi(s,r)|dr\Big)-1.
\eeno
Thanks to
\beno
|(\nabla Z(r))\circ\Phi(s,r)|\leq |\nabla Z|_{0,\a;h,{\Omega}, T}h(r)\circ\Phi(s,r),
\eeno
we get by (\ref{eq:Z-cond}) that
\begin{align*}
  \int_t^s|(\nabla Z(r))\circ\Phi(s,r)(X)|dr\leq& |\nabla Z|_{0,\a;h,{\Omega}, T}\int_t^sh(r)\circ\Phi(s,r)(X)dr\\
=&|\nabla Z|_{0,\a;h,{\Omega}, T}\int_t^sh(r,\Phi(T,r,\Phi(s,T)(X)))dr\\
\leq& A_0.
 \end{align*}
Thus, we conclude that
 \beno
 &&|\nabla\Phi(s,t)-Id|_{0;{\Omega}}\leq e^{A_0}-1,\\
 &&|\nabla\Phi(s,t)|_{0;{\Omega}}\leq e^{A_0},\\
 &&|\Phi(s,t,X)-\Phi(s,t,Y)|\leq |\nabla\Phi(s,t)|_{0;{\Omega}}|X-Y|\leq e^{A_0}|X-Y|.
 \eeno

Notice that
\begin{align*}
|\nabla\Phi(s,t,X)-&\nabla\Phi(s,t,Y)|\leq \int_t^s|\nabla\Phi(s,r,X)-\nabla\Phi(s,r,Y)||(\nabla Z(r))\circ\Phi(s,r,X)|dr\\
&+\int_t^s|\nabla\Phi(s,r,Y)||(\nabla Z(r))\circ\Phi(s,r,X)-(\nabla Z(r))\circ\Phi(s,r,Y)|dr,
\end{align*}
from which and Gronwall's inequality, we infer
\beno
&&|\nabla\Phi(s,t,X)-\nabla\Phi(s,t,Y)|\\ &&
\leq \int_t^s|\nabla\Phi(s,r,Y)||(\nabla Z(r))\circ\Phi(s,r,X)-(\nabla Z(r))\circ\Phi(s,r,Y)|dr \exp\Big(\int_t^s|(\nabla Z(r))\circ\Phi(s,r,X)|dr\Big)\\
&&\leq \int_t^s|\nabla\Phi(s,r,Y)||\nabla Z|_{0,\a;h,{\Omega}, T}(h(r,\Phi(s,r,X))+h(r,\Phi(s,r,Y)))|\Phi(s,r,X)-\Phi(s,r,Y)|^{\a}dre^{A_0}\\
&&\leq \int_t^se^{A_0}|\nabla Z|_{0,\a;h,{\Omega}, T}(h(r,\Phi(s,r,X))+h(r,\Phi(s,r,Y)))e^{\a A_0}|X-Y|^{\a}dre^{A_0}\\
&&=e^{(2+\a)A_0}|X-Y|^{\a}|\nabla Z|_{0,\a;h,{\Omega}, T}\int_t^s(h(r,\Phi(s,r,X))+h(r,\Phi(s,r,Y)))dr\\
&&\leq 2A_0e^{(2+\a)A_0}|X-Y|^{\a},
\eeno
which shows the last inequality of the lemma.
\end{proof}

Next we consider the transport equation
\ben\label{eq:trans-D}
\partial_t u+Z\cdot\nabla u=F,\quad u(0,X)=u_0(X).
\een
Using the characteristic, the solution $u(t,X)$ is given by
\ben\label{eq:trans-I}
u(t,X)=u_0(\Phi(t,0,X))+\int_0^tF\big(s,\Phi(t,s,X)\big)ds.
\een

\begin{lemma}\label{lem:tran-H}
If $Z$ satisfies (\ref{eq:Z-cond}), then we have
\beno
&&|u(t)|_{0,\a;{\Omega}}\leq e^{\a A_0}\Big(|u_0|_{0,\a;{\Omega}}+\int_0^t|F(s)|_{0,\a;{\Omega}}ds\Big),\\
&&|\text{div}\,u(t)|_{0;{\Omega}}\leq |\text{div}\,u_0|_{0;{\Omega}}+\int_0^t|(tr(\nabla Z\nabla u)- \text{div} F)(s)|_{0;{\Omega}}ds.
\eeno
\end{lemma}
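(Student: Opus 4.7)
The plan is to exploit the explicit representation formula \eqref{eq:trans-I} together with the bounds on $\nabla\Phi$ supplied by Lemma \ref{lem:char}, which already packages the effect of hypothesis \eqref{eq:Z-cond} into the convenient constants $e^{A_0}$ and $2A_0 e^{(2+\alpha)A_0}$. The two inequalities are essentially independent: the first is a direct Hölder estimate for a function pulled back by $\Phi$, and the second requires first deriving a transport equation for $\dive u$.

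For the first inequality, I would apply the formula $u(t,X)=u_0(\Phi(t,0,X))+\int_0^tF(s,\Phi(t,s,X))\,ds$ and estimate pointwise. The sup norm is immediate, since $\Phi(t,s,\cdot):\Omega\to\Omega$ and so $|u_0\circ\Phi(t,0,\cdot)|_{0;\Omega}\le|u_0|_{0;\Omega}$ and similarly for $F$. For the Hölder seminorm, Lemma \ref{lem:char} gives $|\Phi(t,s,X)-\Phi(t,s,Y)|\le e^{A_0}|X-Y|$ uniformly in $s\le t$, so
\[
|u_0(\Phi(t,0,X))-u_0(\Phi(t,0,Y))|\le [u_0]_{\alpha;\Omega}\,e^{\alpha A_0}|X-Y|^{\alpha},
\]
and the analogous bound holds for the $F$-term under the integral. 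Adding the sup and Hölder pieces and using $e^{\alpha A_0}\ge 1$ to absorb the constant on the sup norm delivers the first inequality.

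For the divergence estimate, I would differentiate \eqref{eq:trans-D} componentwise: writing $u=(u^i)$, one has
\[
\partial_i(Z^j\partial_j u^i)=(\partial_i Z^j)(\partial_j u^i)+Z^j\partial_j(\partial_i u^i)=\mathrm{tr}(\nabla Z\,\nabla u)+Z\cdot\nabla\dive u,
\]
so that $\dive u$ solves the scalar transport equation
\[
\partial_t\dive u+Z\cdot\nabla\dive u=\dive F-\mathrm{tr}(\nabla Z\,\nabla u),
\]
with initial datum $\dive u_0$. Applying \eqref{eq:trans-I} to this scalar equation and taking the sup norm (which is unaffected by composition with $\Phi(t,s,\cdot)$) yields exactly the second inequality, with the sign inside the integrand chosen so that the absolute value is written as $|(\mathrm{tr}(\nabla Z\nabla u)-\dive F)(s)|_{0;\Omega}$.

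No step looks delicate; the only care is making sure the Lipschitz constant $e^{A_0}$ of $\Phi$ from Lemma \ref{lem:char} raised to the power $\alpha$ is what controls the Hölder seminorm, and that the divergence identity does not require $\dive Z=0$ (it does not, since the commutator term $\mathrm{tr}(\nabla Z\,\nabla u)$ is precisely what appears on the right-hand side of the stated bound). The main conceptual point to highlight is that the boundary condition $Z^d=0$ on $\partial\Omega$ is what keeps $\Phi(t,s,\cdot)$ a diffeomorphism of $\Omega$, so that the compositions above make sense without creating boundary artifacts.
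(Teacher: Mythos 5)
Your proposal is correct and follows essentially the same route as the paper: the first inequality via the representation formula \eqref{eq:trans-I} combined with the Lipschitz bound $|\nabla\Phi(t,s)|_{0;\Omega}\le e^{A_0}$ from Lemma \ref{lem:char} (the paper invokes Lemma \ref{lem:comp} where you unpack the composition estimate by hand), and the second by taking the divergence of \eqref{eq:trans-D} to obtain the scalar transport equation for $\dive u$ and reading off the sup bound along characteristics.
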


\begin{proof}
Using (\ref{eq:trans-I}), Lemma \ref{lem:comp} and Lemma \ref{lem:char}, we get
\begin{align*}
|u(t)|_{0,\a;{\Omega}}\leq& |u_0\circ\Phi(t,0)|_{0,\a;{\Omega}}+\int_0^t|F(s)\circ\Phi(t,s)|_{0,\a;{\Omega}}ds\\
\leq & |u_0|_{0,\a;{\Omega}}\max\big\{|\nabla\Phi(t,0)|_{0;\Om}^{\a},1\big\}+\int_0^t|F(s)|_{0,\a;{\Omega}}
\max\big\{|\nabla\Phi(t,s)|_{0;\Om}^{\a},1\big\}ds\\
\leq& e^{\a A_0}\Big(|u_0|_{0,\a;{\Omega}}+\int_0^t|F(s)|_{0,\a;{\Omega}}ds\Big).
\end{align*}

Taking divergence to (\ref{eq:trans-D}),  we obtain
 \beno
\partial_t\text{div} u+Z\cdot\nabla\text{div} u+tr(\nabla Z\nabla u)=\text{div}F,\quad u(0,X)=u_0(X).
\eeno
So, we have
\beno
\text{div}u(t)=\text{div}u_0\circ\Phi(t,0)+\int_0^t(\text{div}F-tr(\nabla Z\nabla u))(s)\circ\Phi(t,s)ds,
\eeno
then the second inequality follows easily.
\end{proof}

\begin{proposition}\label{prop:tran-HW}
If $|Z+B_0|_{1,\a;f_-,{\Omega}, T}\delta(T)<1,$ then we have
\beno
|u|_{1,\a;f_+,{\Omega},T}\leq C\big(|u_0|_{1,\a;f,{\Omega}}+\delta(T)|F|_{1,\a;g,{\Omega},T}\big).
\eeno
If $| Z-B_0|_{1,\a;f_+,{\Omega}, T}\delta(T)<1,$ then we have
\beno
|u|_{1,\a;f_-,{\Omega},T}\leq C\big(|u_0|_{1,\a;f,{\Omega}}+\delta(T)|F|_{1,\a;g,{\Omega},T}\big).
\eeno
Here $C$ is a constant independent of $T$.
\end{proposition}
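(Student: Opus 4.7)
The plan is to use the characteristic representation \eqref{eq:trans-I} together with the composition lemma (Lemma~\ref{lem:comp}) and the flow estimate (Lemma~\ref{lem:char}). As the first step, I would verify that the abstract hypothesis \eqref{eq:Z-cond} of Lemma~\ref{lem:char} is satisfied with weight $h=f_-$. Since $\nabla B_0=0$, one has $|\nabla Z|_{0,\alpha;f_-,\Omega,T}\le |Z+B_0|_{1,\alpha;f_-,\Omega,T}$, so it suffices to establish the uniform flow bound
\[
\sup_{X\in\Omega}\int_0^T f_-(t,\Phi(T,t,X))\,dt\le C\delta(T)
\]
with an absolute constant~$C$. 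Granted this, one has $A_0\le C|Z+B_0|_{1,\alpha;f_-,\Omega,T}\delta(T)\le C$ by hypothesis, and Lemma~\ref{lem:char} supplies uniform control of $|\nabla\Phi(s,t)-I|_0$, $|\nabla\Phi(s,t)|_0$, and $[\nabla\Phi(s,t)]_\alpha$.

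The flow bound above is the heart of the argument and I would establish it via a continuity bootstrap. The identity
\[
\Phi(T,t,X)-\bigl(X+B_0(T-t)\bigr)=-\int_t^T(Z+B_0)\bigl(r,\Phi(T,r,X)\bigr)\,dr
\]
shows the deviation from the linear flow is bounded by $|Z+B_0|_{1,\alpha;f_-,\Omega,T}\int_t^T f_-(r,\Phi(T,r,X))\,dr$. Under the bootstrap hypothesis that this integral remains at most $2C_1$, the smallness assumption forces the deviation to stay strictly below~$2$, so the third line of \eqref{ass:f} yields
\[
f_-(t,\Phi(T,t,X))=f\bigl(\Phi(T,t,X)-B_0 t\bigr)\le 2f\bigl(X+B_0T-2B_0 t\bigr).
\]
Integrating in $t$ and invoking the first line of \eqref{ass:f} with $Y=X+B_0T$ gives $\int_0^T f_-(t,\Phi(T,t,X))\,dt\le 2\delta(T)\le 2C_1$, closing the bootstrap. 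The same deviation estimate also produces the pointwise comparison $f\circ\Phi(t,0)\sim f_+(t)$, and the second inequality of Lemma~\ref{lem:weight} furnishes $g(s,\Phi(t,s,X))\le Cg(s,X+B_0(t-s))$; integrating in $s$ and using the third inequality of Lemma~\ref{lem:weight} after the change of variables $Y=X+B_0 t$ yields
\[
\int_0^t g(s,\Phi(t,s,X))\,ds\le C\delta(T)f_+(t,X).
\]

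With these weight transfers in hand, applying Lemma~\ref{lem:comp} to $u_0\circ\Phi(t,0)$ and $F(s)\circ\Phi(t,s)$, whose composition constants are uniformly bounded by Lemma~\ref{lem:char}, gives $|u_0\circ\Phi(t,0)|_{1,\alpha;f\circ\Phi(t,0),\Omega}\le C|u_0|_{1,\alpha;f,\Omega}$ and an analogous bound for $F$. Upgrading the weights via the pointwise comparison $f\circ\Phi(t,0)\sim f_+(t)$ and assembling the integral contribution through the third bullet of Lemma~\ref{lem:product} with the integrated weight bound above then yields the stated estimate for $|u|_{1,\alpha;f_+,\Omega,T}$. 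The second statement follows by the obvious symmetry $B_0\leftrightarrow -B_0$, equivalently swapping $f_+\leftrightarrow f_-$ and exchanging the roles of the two Els\"asser variables. The main obstacle is the simultaneous bootstrap on the flow and the integrated weight: the characteristic is only known to approximate the linear trajectory once the weighted integral is controlled, while the weighted integral bound in turn depends on that near-linear behavior; these two feedbacks close precisely at the threshold $|Z+B_0|_{1,\alpha;f_-,\Omega,T}\delta(T)<1$, which is exactly the smallness required to keep the flow within distance~$2$ of the linear trajectory, the regime in which the third line of \eqref{ass:f} applies without constant loss.
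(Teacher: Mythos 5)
Your proposal follows essentially the same route as the paper's proof: a continuity argument showing the characteristic stays within distance $2$ of the linear trajectory (the paper's claim \eqref{claim:phi}), verification of \eqref{eq:Z-cond} with $h=f_-$ via the first and third lines of \eqref{ass:f}, the weight transfers $f\circ\Phi(t,0)\lesssim f_+(t)$ and $\int_0^t g(s,\Phi(t,s,X))\,ds\le C\delta(T)f_+(t,X)$ from Lemma \ref{lem:weight}, and assembly through Lemma \ref{lem:comp}, Lemma \ref{lem:char} and the integral bullet of Lemma \ref{lem:product}. One small logical slip: bootstrapping on the hypothesis that the integral stays below $2C_1$ does not, together with $|Z+B_0|_{1,\a;f_-,\Omega,T}\delta(T)<1$, force the deviation below $2$ when $\delta(T)\ll C_1$, since one only gets deviation $\le 2C_1|Z+B_0|_{1,\a;f_-,\Omega,T}$. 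The paper instead runs the continuity argument directly on the deviation being $<2$ (equivalently, you could bootstrap on the integral being $\le 2\delta(T)$, which is the bound you in fact derive); with that adjustment the argument closes exactly as you describe.
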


\begin{proof}
We only prove the first inequality, the proof of the second one is similar.
Let us claim
\ben\label{claim:phi}
|\Phi(s,t,X)+B_0(t-s)-X|<2\quad \text{for}\quad  0\leq t\leq s\leq T.
\een
Otherwise, there exists $t\in [0,s]$ such that $|\Phi(s,t,X)+B_0(t-s)-X|=2$ and $|\Phi(s,r,X)+B_0(r-s)-X|\leq2$ for $r\in[t,s]$. Thus,
\begin{align*}
 |\Phi(s,t,X)+B_0(t-s)-X|\leq& \int_t^s|\partial_r\Phi(s,r,X)+B_0|dr\\
 =&\int_t^s|Z(r,\Phi(s,r,X))+B_0|dr\\
 \leq&\int_t^s|Z+B_0|_{1,\a;f_-,{\Omega}, T}f_-(r,\Phi(s,r,X))dr\\
 =&|Z+B_0|_{1,\a;f_-,{\Omega}, T}\int_t^sf(\Phi(s,r,X)-B_0r)dr,
 \end{align*}
while, by \eqref{ass:f},
\beno
\int_t^sf(\Phi(s,r,X)-B_0r)dr\leq 2\int_t^sf(X-B_0(r-s)-B_0r)dr\leq 2\delta(T).\eeno
This shows
\beno
|\Phi(s,t,X)+B_0(t-s)-X|\leq 2|Z+B_0|_{1,\a;f_-,{\Omega}, T}\delta(T)<2,
\eeno
which is a contradiction, hence \eqref{claim:phi} is true.

Now we verify (\ref{eq:Z-cond}) for $h=f_-,\ A_0=2$. Indeed, by \eqref{ass:f} and \eqref{claim:phi},
\begin{align*}
\int_0^Tf_-(t,\Phi(T,t,X))dt=&\int_0^Tf(\Phi(T,t,X)-B_0t)dt\\
\leq& 2\int_0^Tf(X-B_0(t-T)-B_0t)dt\leq 2\delta(T),
\end{align*}
which implies (\ref{eq:Z-cond}). Then we infer from Lemma \ref{lem:char}
that
\ben\label{eq:phi-bound}
|\nabla\Phi(t,s)|_{0,\a; \Om}\leq C.
\een

It follows from Lemma \ref{lem:weight} and \eqref{claim:phi} that
\beno
 \int_0^t g(r,\Phi(t,r,X)) dr\leq C\int_0^t g(r,X-B_0(r-t)) dr\leq C\delta(T)f(X+B_0t),
 \eeno
which implies
 \begin{align*}
|u(t)|_{1,\a;f_+(t),\Omega}\leq& |u_0\circ\Phi(t,0)|_{1,\a;f_+(t),\Omega}+C\delta(T)\sup_{0\leq s\leq t}|F(s)\circ\Phi(t,s)|_{0,\a;g(s)\circ\Phi(t,s),{\Omega}}.
\end{align*}
Using the fact $f(\Phi(t,0,X))\leq 2f(X-B_0(0-t))=2f_+(t,X)$, we get
\beno
|u_0\circ\Phi(t,0)|_{1,\a;f_+(t),\Omega}\le 2|u_0\circ\Phi(t,0)|_{1,\a;f\circ\Phi(t,0),\Omega}.
\eeno
Then by Lemma \ref{lem:comp}  and  \eqref{eq:phi-bound}, we obtain\begin{align*}
|u(t)|_{1,\a;f_+(t),\Omega}\leq & C\Big(|u_0\circ\Phi(t,0)|_{1,\a;f,\Omega}+\delta(T)\sup_{0\leq s\leq t}|F(s)|_{1,\a;g(s),{\Omega}}\Big)\\&\quad\times\max\big\{|\nabla\Phi(t,s)|_{0;\Om}^{\a},1\big\}\max\big\{|\nabla\Phi(t,s)|_{0,\a; \Om},1\big\}\\ \leq& C|u_0|_{1,\a;f,\Omega}+C\delta(T)\sup_{0\leq s\leq t}|F(s)|_{1,\a;g(s),{\Omega}}.
\end{align*}
This shows the first inequality of the lemma.
\end{proof}

\subsection{Representation formula of the pressure}

In this subsection, we give a representation formula of the pressure
by using the symmetric extension.

Let $(v,b,p)$ be a smooth solution of (\ref{eq:MHD}) in $[0,T]\times \Om$
with the boundary condition (\ref{bc:zero}).  We make the following symmetric extension for the solution:
\beno
\overline{v}=Tv\eqdef \big(T_ev^1,\cdots,T_ev^{d-1},T_ov^{d}\big),\quad
\overline{b}=Tb,\quad \overline{p}=T_ep.
\eeno
Then $\big(\overline{v}, \overline{b},\overline{p}\big)$ satisfies (\ref{eq:MHD}) in $[0,T]\times \R^{d}$ in the weak sense.
Although the solution after symmetric extension has not the same smoothness as the origin one, we have the following important observation.

\begin{lemma}\label{lem:extension-p}
Let $h$ be a weight satisfying \eqref{ass:W}.
Let $u=(u^1,\cdots, u^d), w=(w^1,\cdots, w^d)\in C^{1,\al}_h(\Om)$ be two vector fields with $u^{d}=w^{d}=0$ on $\pa\Om$. Let
$\overline{u}=Tu,\,\overline{w}=Tw.$
Then it holds that for $i,j=1,\cdots, d$,
\begin{align*}
&|\partial_i\overline{u}^j\partial_j\overline{w}^i|_{0,\al;h}+|\partial_i\overline{u}^i\partial_j\overline{w}^j|_{0,\al;h} \leq C|\nabla{u}|_{0,\al; h,\Omega}|\nabla{w}|_{0,\al;h,\Omega},\\
&|\overline{u}^j\partial_j\overline{w}^i|_{0,\al;h}+|\overline{u}^i\partial_j\overline{w}^j|_{0,\al;h}\leq C|{u}|_{0,\al;h,\Omega}|\nabla{w}|_{0,\al;h,\Omega}.
\end{align*}
\end{lemma}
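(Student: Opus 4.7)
I will show that each product appearing in the statement is itself a symmetric extension --- either $T_e$ or $T_o$ --- of a scalar function on $\Omega$, so that Lemma \ref{lem:extension} transfers its weighted $C^{0,\alpha}(\R^d)$ norm to a weighted $C^{0,\alpha}(\Omega)$ norm, at which point Lemma \ref{lem:product} finishes. Although individual partial derivatives of $\overline u$, $\overline w$ jump across the hyperplanes $\{y\in\Z\}$, the sign factors will cancel in pairs inside each of the six products under consideration.

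The preparation is a parity computation. Let $\sigma:\R^d\to\{\pm1\}$ equal $+1$ on $\Omega_+$ and $-1$ on $\Omega_-$. From the identities $T_e f=f\circ\rho$ and $T_of=\sigma\cdot(f\circ\rho)$, which one reads off from the proof of Lemma \ref{lem:extension}, differentiating away from the lattice $\{y\in\Z\}$ gives
\[
\partial_k(T_e f)(X)=\sigma(X)^{\delta_{kd}}(\partial_kf)(\rho X),\qquad \partial_k(T_of)(X)=\sigma(X)^{1-\delta_{kd}}(\partial_kf)(\rho X),
\]
with exponents modulo $2$ and $\delta_{kd}$ the Kronecker symbol. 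Since $\overline u^j=T_eu^j$ for $j<d$ and $\overline u^d=T_ou^d$, these merge into the single identity $\partial_k\overline u^j(X)=\sigma(X)^{\delta_{kd}+\delta_{jd}}(\partial_ku^j)(\rho X)$. Multiplying two such derivatives with indices $i$, $j$ swapped makes the two sign exponents coincide, so their sum is even and the $\sigma$-factor disappears:
\[
\partial_i\overline u^j\,\partial_j\overline w^i=(\partial_iu^j\,\partial_jw^i)\circ\rho=T_e(\partial_iu^j\,\partial_jw^i),
\]
and likewise $\partial_i\overline u^i\,\partial_j\overline w^j=T_e(\partial_iu^i\,\partial_jw^j)$. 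The undifferentiated case gives $\overline u^j\,\partial_j\overline w^i=\sigma^{\delta_{id}}(u^j\,\partial_jw^i)\circ\rho$, which is $T_e(u^j\,\partial_jw^i)$ for $i<d$ and $T_o(u^j\,\partial_jw^d)$ for $i=d$. The latter $T_o$-extension is admissible because the boundary conditions $u^d=w^d=0$ on $\partial\Omega$ force $u^j\,\partial_jw^d=0$ on $\partial\Omega$: for $j<d$ the tangential derivative $\partial_jw^d$ vanishes there, and for $j=d$ the factor $u^d$ does. The remaining product $\overline u^i\,\partial_j\overline w^j$ is entirely analogous.

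With these identities in hand the two claimed estimates are immediate. Because the weight $h$ depends only on $x$, Lemma \ref{lem:extension} bounds the $|\cdot|_{0,\alpha;h}$ norm of each $T_e$- or $T_o$-extension on $\R^d$ by at most twice the $|\cdot|_{0,\alpha;h,\Omega}$ norm of its base on $\Omega$, and Lemma \ref{lem:product} applied with $(h_1,h_2)=(h,1)$ --- absorbing the resulting factor of $|h|_\infty$ into the constant --- yields the products $|\nabla u|_{0,\alpha;h,\Omega}|\nabla w|_{0,\alpha;h,\Omega}$ and $|u|_{0,\alpha;h,\Omega}|\nabla w|_{0,\alpha;h,\Omega}$. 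The delicate step is the parity computation in the middle paragraph: each factor $\partial_d\overline u^j$ for $j<d$ genuinely jumps across the hyperplanes $\{y\in\Z\}$, so neither it nor its companion $\partial_j\overline w^d$ is Hölder continuous on its own, and it is only after the parities pair off (with $w^d|_{\partial\Omega}=0$ ensuring continuity of the $T_o$ extensions) that one recognises the product as the even extension of a Hölder function on $\Omega$.
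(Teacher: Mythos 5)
Your proof is correct and follows essentially the same route as the paper: the paper's argument consists precisely of the identities $\partial_i\overline{u}^j\partial_j\overline{w}^i=T_e(\partial_i u^j\partial_j w^i)$, $\overline{u}^j\partial_j\overline{w}^i=T_e(u^j\partial_j w^i)$ for $i<d$ and $=T_o(u^j\partial_j w^d)$ for $i=d$ (and their analogues), followed by an appeal to Lemma \ref{lem:extension}. You have simply supplied the parity bookkeeping and the verification that the $T_o$-extended products lie in $C_0(\Omega)$, which the paper leaves as ``easy to verify.''
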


\begin{proof}
It is easy to verify that
\beno
&&\partial_i\overline{u}^j\partial_j\overline{w}^i=T_e(\partial_i{u}^j\partial_j{w}^i),\quad \partial_i\overline{u}^i\partial_j\overline{w}^j=T_e(\partial_i{u}^i\partial_j{w}^j),\\
&&\overline{u}^j\partial_j\overline{w}^i=T_e({u}^j\partial_j{w}^i),\quad \overline{u}^i\partial_j\overline{w}^j=T_e({u}^i\partial_j{w}^j)\quad \text{for}\quad i=1,\cdots,d-1\\
&&\overline{u}^j\partial_j\overline{w}^{d}=T_o({u}^j\partial_j{w}^{d}),
\quad \overline{u}^d\partial_j\overline{w}^j=T_0({u}^d\partial_j{w}^j).
\eeno
Then the lemma follows easily from Lemma \ref{lem:extension}.
\end{proof}

Taking the divergence to the first equation of (\ref{eq:MHD}), we get
\beno
-\Delta \overline{p}=\pa_i\big(\overline{v}^j\pa_j\overline{v}^i-\overline{b}^j\pa_j\overline{b}^i\big).
\eeno
Formally, we have
\beno
\na \overline{p}(t,X)=\na\int_{\R^{d}}N(X-Y)\pa_i\big(\overline{v}^j\pa_j\overline{v}^i-\overline{b}^j\pa_j\overline{b}^i\big)(t,Y)dY,
\eeno
where $N(X)$ is the Newton potential.
In terms of the Els$\ddot{a}$sser variables $\overline{z}_\pm(t,X)$, we have
\begin{align*}
\nabla\overline{p}(t,X)=\nabla\int_{\R^{d}}N(X-Y)\partial_i(\overline{z}^j_+\partial_j\overline{z}^i_-)(t,Y)dY.
\end{align*}
However, this integral does not make sense for $\partial_i(\overline{z}^j_+\partial_j\overline{z}^i_-)\in C^{0,\al}$. To overcome this trouble, we introduce a smooth cut-off function $\theta(r)$ so that
\ben\label{def:theta}
\theta(r)=\left\{\begin{array}{l}1\quad \text{for}\ |r|\leq 1,\\0\quad \text{for}\ |r|\geq 2.\end{array}\right.
\een
Integration by parts, we can split $\nabla \ol{p}(t,X)$ as
\begin{align}
-\nabla \ol{p}(t,X)=&\int_{\R^d}\nabla N(X-Y)(\partial_i\overline{z}^j_+\partial_j\overline{z}^i_-)(t,Y)dY\nonumber\\
&+\int_{\R^{d}}\partial_i\partial_j\Big(\nabla N(X-Y)(1-\theta(|X-Y|))\Big)
(\overline{z}^j_+\overline{z}^i_-)(t,Y)dY.\label{eq:pressure}
\end{align}
It is easy to check that this representation makes sense for $\overline{z}_\pm\in W^{1,\infty}(\R^{d})$.

We denote
\begin{align}\label{eq:B(u,v)}
{T_1u}\tre&\int_{\R^d}\nabla N(X-Y)\theta(|X-Y|)u(Y)dY\nonumber\\
{T_{ij}w}\tre&\int_{\R^d}\partial_i\partial_j\Big(\nabla N(X-Y)(1-\theta(|X-Y|))\Big)w(Y)dY.
\end{align}
Let $u,w\in C^{1,\al}(\Om)$ be two vector fields with $u^d=w^d=0$ on $\pa\Om$.
Let $\ol{u}=Tu, \ol{w}=Tw$ be the symmetric extension. We denote
\ben\label{def:I}
I(u,w)\tre T_1(\pa_i\ol{u}^j\pa_j\ol{w}^i-\pa_j\ol{u}^j\pa_i\ol{w}^i)+T_{ij}(\ol{u}^i\ol{w}^j).
\een
Here and in what follows, the repeated index denotes the summation.
Thanks to
\ben
\pa_i\ol{u}^j\pa_j\ol{w}^i-\pa_j\ol{u}^j\pa_i\ol{w}^i=\pa_i\big(\ol{u}^j\pa_j\ol{w}^i-\ol{u}^i\pa_j\ol{w}^j\big),\label{eq:relation-3}
\een
we infer from Lemma \ref{lem:integral} and Lemma \ref{lem:extension-p} that
\begin{align}
|I(u,w)|_{0,\a;\Om}\le C|u|_{0,\al;\Om}|w|_{1,\al;\Om}.\label{eq:I-Cal}\end{align}

Using Lemma \ref{lem:B-div} and \eqref{eq:relation-3}, let us calculate
\begin{align*}
&\text{div}I(u,w)+(\partial_iu^j\partial_jw^i-\partial_iu^i\partial_jw^j)\\
&=\int_{\R^d}\nabla N(X-Y)\cdot\nabla\theta(|X-Y|)
\big(\partial_i\ol{u}^j\partial_j\ol{w}^i-\partial_i\ol{u}^i\partial_j\ol{w}^j\big)(Y)dY\\
&\quad-\int_{\R^d}\partial_i\partial_j\Big(\nabla N(X-Y)\cdot\nabla\theta(|X-Y|)\Big)(\ol{u}^j\ol{w}^i)(Y)dY\\
&=\int_{\R^d}\partial_i\Big(\nabla N(X-Y)\cdot\nabla\theta(|X-Y|)\Big)
\big(-\ol{u}^j\partial_j\ol{w}^i+\ol{u}^i\partial_j\ol{w}^j+\partial_j(\ol{u}^j\ol{w}^i)\big)(Y)dY\\
&=\int_{\R^d}\partial_i\Big(\nabla N(X-Y)\cdot\nabla\theta(|X-Y|)\Big)
\big(\ol{u}^i\text{div} \ol{w}+\ol{w}^i\text{div}\ol{u}\big)(Y)dY,
\end{align*}
which  implies
\ben\label{eq:I-div}
\big|\text{div}I(u,w)-(\partial_iu^j\partial_jw^i-\partial_iu^i\partial_jw^j)\big|_{0;\Om}
\leq C\big(|u|_{0;\Om}|\text{div} w|_{0;\Om}+|w|_{0,\Om}|\text{div}u|_{0;\Om}\big).
\een

In the case of $\R^d$,  the pressure $p(t,X)$ can also be expressed as
\ben\label{pressure-Rd}
-\na p(t,X)=I(z_+,z_-),
\een
where
\begin{align}
I(u,w)\tre &\int_{\R^d}\nabla N(X-Y)\theta(|X-Y|)(\partial_iu^j\partial_jv^i)(Y)dY\nonumber\\
&+\int_{\R^{d}}\partial_i\partial_j\Big(\nabla N(X-Y)(1-\theta(|X-Y|))\Big)
(u^jv^i)(Y)dY.\label{I(u,w)-Rd}
\end{align}
Notice that the representation formula \eqref{pressure-Rd}  is independent of the choice of $\theta$
in $I(u,w)$.

\subsection{Proof of Theorem \ref{thm:ideal}}
Since we can not find a well-posedness theory for the ideal MHD equations in the weighted H\"{o}lder spaces, we will present a complete proof of
Theorem \ref{thm:ideal}. In fact, we find that
the proof of the existence part is very nontrivial.\smallskip

Using the representation of the pressure (\ref{eq:pressure}), we rewrite the system (\ref{eq:MHD-ideal}) as
\ben\label{eq:MHD-ideal-2}
\left\{\begin{array}{l}
\partial_t z_++Z_-\cdot\nabla z_+=-I({z_+},{z_-}),\\
\partial_t z_-+Z_+\cdot\nabla z_-=-I({z_+},{z_-}),\\
z_+(0,X)=z_{+0}(X),\quad z_-(0,X)=z_{-0}(X).
\end{array}\right.
\een

Let $T>0$ be determined later and $A_1=|z_{+0}|_{1,\a;f,\Omega}+|z_{-0}|_{1,\a;f,\Omega}$.
When $A_1$ is sufficiently small, $T$ can be taken $+\infty$.
The system (\ref{eq:MHD-ideal-2}) is solved by the following iteration scheme:
$$z_+^{(0)}=z_-^{(0)}=0,\quad Z_+^{(n)} =z_+^{(n)}+B_0,\quad Z_-^{(n)}=z_-^{(n)}-B_0.$$
Let us inductively assume that $z_\pm^{(n)}$ satisfies
\beno
|z_+^{(n)}|_{1,\a;f_+,{\Omega}, T}\leq 2C_1A_1,\quad |z_-^{(n)}|_{1,\a;f_-,{\Omega}, T}\leq 2C_1A_1.
\eeno
Take $T>0$ so that $4C_1A_1\delta(T)<1$. Then we have
\ben\label{eq:zn-est}
| z_+^{(n)}|_{1,\a;f_+,{\Omega}, T}\delta(T)<\f12,\quad | z_-^{(n)}|_{1,\a;f_-,{\Omega}, T}\delta(T)<\f12.
\een
Now, the solution $z_+^{(n+1)},\ z_-^{(n+1)}$ are determined by
\begin{align*}
\left\{\begin{array}{l}
\partial_t z_+^{(n+1)}+Z_-^{(n)}\cdot\nabla z_+^{(n+1)}=-I({z_+^{(n)}},{z_-^{(n)}}),\\
\partial_t z_-^{(n+1)}+Z_+^{(n)}\cdot\nabla z_-^{(n+1)}=-I({z_+^{(n)}},{z_-^{(n)}}),\\
z_+^{(n+1)}(0,X)=z_{+0}(X),\quad z_-^{(n+1)}(0,X)=z_{-0}(X).
\end{array}\right.
\end{align*}
It follows from Proposition \ref{prop:tran-HW}  that
\beno
&&|z_+^{(n+1)}|_{1,\a;f_+,{\Omega}, T}\leq C_1\big(|z_{+0}|_{1,\a;f,\Omega}+\delta(T)| z_+^{(n)}|_{1,\a;f_+,{\Omega}, T}| z_-^{(n)}|_{1,\a;f_-,{\Omega},T}\big),\\
&&| z_-^{(n+1)}|_{1,\a;f_-,{\Omega}, T}\leq C_1\big(|z_{-0}|_{1,\a;f,\Omega}+\delta(T)| z_+^{(n)}|_{1,\a;f_+,{\Omega}, T}| z_-^{(n)}|_{1,\a;f_-,{\Omega}, T}\big).
\eeno
Here we used
\begin{align*}
|I(u,w)|_{1,\al;g,\Om}\le& C|\pa_i\ol{u}^j\pa_j\ol{w}^i-\pa_j\ol{u}^j\pa_i\ol{w}^i|_{0,\al;h}+C|\ol{u}\ol{w}|_{0;h}\\
\le& C|u|_{1,\al;h,\Om}|w|_{1,\al;h,\Om},
\end{align*}
which follows from Lemma \ref{lem:integral} with $h(t,X)=f_+f_-(t,X)$
and Lemma \ref{lem:extension-p}.

Due to (\ref{eq:zn-est}), we obtain
\beno
|z_+^{(n+1)}|_{1,\a;f_+,{\Omega}, T}\leq 2C_1A_1,\quad |z_-^{(n+1)}|_{1,\a;f_-,{\Omega}, T}\leq 2C_1A_1.
\eeno
In particular, we show that for any $n$,
\beno
| z_+^{(n)}|_{1,\a;f_+, {\Omega},T}\leq C,\quad | z_-^{(n)}|_{1,\a;f_-,{\Omega}, T}\leq C.
\eeno

Next, we show that $\big\{z_\pm^{(n)}\big\}_{n\ge 0}$ is a Cauchy sequence in $C^{0,\al}(\Om)$.
Indeed, we have
\begin{align*}
&\partial_t (z_+^{(n+1)}-z_+^{(n)})+Z_-^{(n)}\cdot\nabla (z_+^{(n+1)}-z_+^{(n)})+(z_-^{(n)}-z_-^{(n-1)})\cdot\nabla z_+^{(n)}\\
&\qquad\qquad+I({z_+^{(n)}}-{z_+^{(n-1)}},{z_-^{(n)}})
+I({z_+^{(n-1)}},{z_-^{(n)}}-{z_-^{(n-1)}})=0,\\
&\partial_t (z_-^{(n+1)}-z_-^{(n)})+Z_+^{(n)}\cdot\nabla (z_-^{(n+1)}-z_-^{(n)})+(z_+^{(n)}-z_+^{(n-1)})\cdot\nabla z_-^{(n)}\\
&\qquad\qquad+I({z_+^{(n)}}-{z_+^{(n-1)}},{z_-^{(n)}})
+I({z_+^{(n-1)}},{z_-^{(n)}}-{z_-^{(n-1)}})=0,\\
&(z_+^{(n+1)}-z_+^{(n)})(0,X)=0,\ (z_-^{(n+1)}-z_-^{(n)})(0,X)=0.
\end{align*}
Then it follows from Lemma \ref{lem:tran-H} and (\ref{eq:I-Cal}) that
\begin{align*}
|(z_+^{(n+1)}-z_+^{(n)})(t)|_{0,\a;\Omega}\leq& C\int_0^t|(z_-^{(n)}-z_-^{(n-1)})(s)|_{0,\a;\Omega}|\nabla z_+^{(n)}(s)|_{0,\a;\Omega}ds\\
&+C\int_0^t|(z_+^{(n)}-z_+^{(n-1)})(s)|_{0,\a;\Omega}|z_-^{(n)}(s)|_{1,\a;\Omega}ds\\
&+C\int_0^t|(z_-^{(n)}-z_-^{(n-1)})(s)|_{0,\a;\Omega}|z_+^{(n-1)}(s)|_{1,\a;\Omega}ds\\
\leq& C_2\int_0^t\big(|(z_+^{(n)}-z_+^{(n-1)})(s)|_{0,\a;\Omega}+|(z_-^{(n)}-z_-^{(n-1)})(s)|_{0,\a;\Omega}\big)ds.
\end{align*}
Similarly, we have
\begin{align*}
|(z_-^{(n+1)}-z_-^{(n)})(t)|_{0,\a;\Omega}\leq C_2\int_0^t\big(|(z_+^{(n)}-z_+^{(n-1)})(s)|_{0,\a;\Omega}+|(z_-^{(n)}-z_-^{(n-1)})(s)|_{0,\a;\Omega}\big)ds.
\end{align*}
This implies that
\beno
|(z_+^{(n+1)}-z_+^{(n)})(t)|_{0,\a;\Omega}+|(z_-^{(n+1)}-z_-^{(n)})(t)|_{0,\a;\Omega}\leq C(2C_2t)^n/n!.
\eeno
Therefore,$z_+^{(n)}, z_-^{(n)}$ converge to some $z_+, z_-$ uniformly in $[0,t]\times\Omega$ for any $0<t<T$. As $z_+^{(n)}, z_-^{(n)}$ are uniformly bounded in $C^{1,\a}$, we have $z_+, z_-\in C^{1,\a}$. Then $\nabla z_+^{(n)}, \nabla z_-^{(n)}$ converge to $\nabla z_+,\ \nabla z_-$ uniformly in $[0,t]\times\Omega$ for any $0<t<T$. Using the equations of $ z_+^{(n+1)}, z_-^{(n+1)}$, $\partial_t z_+^{(n)}, \partial_t z_-^{(n)}$ also converge uniformly in $[0,t]\times\Omega$ for any $0<t<T$. Thus, $z_+,\ z_-\in C^1([0,t]\times\overline{\Omega})$ satisfies (\ref{eq:MHD-ideal-2}) and $z_+^d= z_-^d=0$ on $\partial\Omega$.

Finally, it remains to prove that if $\text{div}z_{+0}=\text{div}z_{-0}=0,$ then $\text{div}z_+=\text{div}z_-=0.$
It follows from Lemma \ref{lem:tran-H} and (\ref{eq:I-div}) that
\begin{align*}
|\text{div}z_+(t)|_{0;\Omega}\leq& \int_0^t|(\partial_iz_+^j\partial_jz_-^i-\text{div}I({z_+},{z_-}))(s)|_{0;\Omega}ds\\
\leq& C\int_0^t\big(|\text{div}z_+(s)|_{0;\Omega}|\text{div}z_-(s)|_{0;\Omega}+|z_+(s)|_{0;\Omega}|\text{div}z_-(s)|_{0;\Omega}
\\&\qquad+|\text{div}z_+(s)|_{0;\Omega}|z_-(s)|_{0;\Omega}\big)ds\\
\leq & C\int_0^t\big(|\text{div}z_+(s)|_{0;\Omega}+|\text{div}z_-(s)|_{0;\Omega}\big)ds,
\end{align*}
similarly,
\begin{align*}
|\text{div}z_-(t)|_{0;\Omega}\leq C\int_0^t(|\text{div}z_+(s)|_{0;\Omega}+|\text{div}z_-(s)|_{0;\Omega})ds.
\end{align*}
This implies that $\text{div}z_+=\text{div}z_-=0.$

Let us remark that  $I(z_+,z_-)$ can be expressed as $\nabla p$. Indeed, we can find $\theta_1,\ \theta_2\in C^\infty(0,+\infty)$  such that $\theta_1'(r)=-\theta(r)N(r),\ \theta_2'(r)=(\theta(r)-1)N(r)$. Let $\theta_{ij}(X)=\partial_i\partial_j\theta_2(|X|)$ and
\begin{align*}
I_*(u,w)(x)=&\int_{\R^d}\theta_1(|X-Y|)(\partial_iu^j\partial_jw^i-\partial_ju^j\partial_iw^i)(Y)dY
\nonumber\\&+\int_{\R^d}(\theta_{ij}(X-Y)-\theta_{i,j}(-Y))
(u^jw^i)(Y)dY.
\end{align*}
Then we have $\nabla I_*(u,v)=I(u,v)$. Therefore, we can take $p=I_*(\overline{z}_+,\overline{z}_-)$, which satisfies $|p|\leq C\ln(2+|x|)$.
This completes the proof of Theorem \ref{thm:ideal}.

\section{Global well-posedness for the viscous MHD  equations}

In this section, we study the global well-posedness for the viscous MHD equations in the slab domain $\Om=\R^{d-1}\times [0,1]$ with the Navier-slip boundary condition. Because we can reduce the slab domain $\Om=\R^{d-1}\times [0,1]$ to  $\R^{d-1}\times \T$ by using the symmetric extension, we will consider more general domain $\Om=\R^k\times \T^{d-k}$ for $2\le k\le d$. The case of $k=1$ is more difficult and will be dealt in the future work.

In fact, $\Om=\R^k\times \T^{d-k}$ is a special case of $\R^d$ periodic in $ d-k$ directions $ e_1,\cdots, e_{d-k}$.
We will assume that $ e_1,\cdots, e_{d-k},B_0$ are linearly independent.

\subsection{New formulation}
Let $\mu_1=\dfrac{\nu+\mu}{2}$ and $\mu_2=\dfrac{\nu-\mu}{2}$.
In terms of the Els$\ddot{a}$sser variables $Z_\pm=v\pm b$, the MHD equations (\ref{eq:MHD}) read
\begin{align}\label{eq:MHD-v1}
\left\{\begin{array}{l}
\partial_t z_++Z_-\cdot\nabla z_+=\mu_1 \triangle z_++\mu_2\triangle z_--\nabla p,\\
\partial_t z_-+Z_+\cdot\nabla z_-=\mu_1 \triangle z_-+\mu_2\triangle z_+-\nabla p,\\
\text{div} z_+=\text{div}z_-=0,
\end{array}\right.
\end{align}
where $z_\pm=Z_+\pm B_0.$ In the case of $\nu=\mu$(thus, $\mu_2=0$), the formulation (\ref{eq:MHD-v1})
plays a crucial role in the proof of \cite{Cai,HXY}. To deal with the case of $\nu\neq\mu$,
we need to introduce the following key decomposition
\beno
z_+=z_+^{(1)}+z_+^{(2)},\quad z_-=z_-^{(1)}+z_-^{(2)},
\eeno
where $z_\pm^{(1)}$ and $z_\pm^{(2)}$ are determined by
\begin{align}\label{eq:MHD-z1}
\left\{\begin{array}{l}
\partial_t z_+^{(1)}+Z_-\cdot\nabla z_+^{(1)}=\mu_1\triangle z_+^{(1)}-\nabla p_+^{(1)},\\
\partial_t z_-^{(1)}+Z_+\cdot\nabla z_-^{(1)}=\mu_1\triangle z_-^{(1)}-\nabla p_-^{(1)},\\
\text{div} z_+^{(1)}=\text{div}z_-^{(1)}=0,\\
 z_+^{(1)}(0)=z_+(0),\ z_-^{(1)}(0)=z_-(0),
 \end{array}\right.
\end{align}
and
\begin{align}\label{eq:MHD-z2}
\left\{\begin{array}{l}
\partial_t z_+^{(2)}+Z_-\cdot\nabla z_+^{(2)}=\mu_1\triangle z_+^{(2)}+\mu_2\triangle z_--\nabla p_+^{(2)},\\
\partial_t z_-^{(2)}+Z_+\cdot\nabla z_-^{(2)}=\mu_1\triangle z_-^{(2)}+\mu_2\triangle z_+-\nabla p_-^{(2)},\\
\text{div} z_+^{(2)}=\text{div} z_-^{(2)}=0,\\
z_+^{(2)}(0)=z_-^{(2)}(0)=0.
\end{array}\right.
\end{align}

To estimate $z_\pm^{(1)}$, we rewrite (\ref{eq:MHD-z1}) as
\begin{align}\label{eq:MHD-z1-z}
\left\{\begin{array}{l}
\partial_t z_+^{(1)}+Z_-^{(1)}\cdot\nabla z_+^{(1)}=\mu_1\triangle z_+^{(1)}-z_-^{(2)}\cdot\nabla z_+^{(1)}-I(z_-^{(2)},z_+^{(1)})-I(z_-^{(1)},z_+^{(1)}),\\
\partial_t z_-^{(1)}+Z_+^{(1)}\cdot\nabla z_-^{(1)}=\mu_1\triangle z_-^{(1)}-z_+^{(2)}\cdot\nabla z_-^{(1)}-I(z_+^{(2)},z_-^{(1)})-I(z_+^{(1)},z_-^{(1)}),
 \end{array}\right.
\end{align}
where $I(u,w)$ is defined by (\ref{I(u,w)-Rd}).
And we also need to use the equation of $J_\pm^{(1)}=\text{curl} z_\pm^{(1)}$, which is given by
\begin{align}\label{eq:MHD-z1-j}
\left\{\begin{array}{l}\partial_t J_+^{(1)}+Z_-^{(1)}\cdot\nabla J_+^{(1)}+\nabla z_-^{(1)}\wedge\nabla z_+^{(1)}+\text{curl}(z_-^{(2)}\cdot\nabla z_{+}^{(1)})=\mu \triangle J_+^{(1)},\\ \partial_t J_-^{(1)}+Z_+^{(1)}\cdot\nabla J_-^{(1)}+\nabla z_+^{(1)}\wedge\nabla z_-^{(1)}+\text{curl}(z_+^{(2)}\cdot\nabla z_{-}^{(1)})=\mu \triangle J_-^{(1)}.
\end{array}\right.
\end{align}
Here $A\wedge B=(AB)-(AB)^T$ is understood as matrix multiplication.

To estimate $z_\pm^{(2)}$, we need to introduce another formulation in terms of the stream function
$\psi_\pm^{(2)}=\Delta^{-1}\text{curl }z_\pm^{(2)}$, which satisfies
\begin{align}
\left\{\begin{array}{l}
\partial_t \psi_{+}^{(2)}+\triangle^{-1}\text{curl}(Z_-\cdot\nabla z_+^{(2)})=\mu_1\triangle \psi_{+}^{(2)}+\mu_2J_-,\nonumber\\
\partial_t \psi_{-}^{(2)}+\triangle^{-1}\text{curl}(Z_+\cdot\nabla z_-^{(2)})=\mu_1\triangle \psi_{-}^{(2)}+\mu_2J_+,
\end{array}\right.
\end{align}
where
\ben\label{eq:j-dec}
J_\pm=\text{curl }z_\pm=J_\pm^{(1)}+\text{curl }z_\pm^{(2)}.
\een

We introduce
\beno
&&\textrm{II}_1(u,w)\tre\triangle^{-1}\text{curl}\ \text{div}(u\otimes w),\\
&&\textrm{II}_2(u,w)\tre \triangle^{-1}\text{curl}(u\cdot\nabla w)-u\cdot\nabla \triangle^{-1}\text{curl}w.
\eeno
So, we get
\begin{align*}
\triangle^{-1}\text{curl}(Z_-\cdot\nabla z_+^{(2)})=&Z_-^{(1)}\cdot\nabla \psi_+^{(2)}+\textrm{II}_1(z_-^{(2)},z_+^{(2)})+\textrm{II}_2(z_-^{(1)},z_+^{(2)}).
\end{align*}
Then we deduce that
\begin{align}\label{eq:MHD-psi}
\left\{\begin{array}{l}
\partial_t\psi_{+}^{(2)}+Z_-^{(1)}\cdot\nabla \psi_{+}^{(2)}+\textrm{II}_2(z_-^{(1)},z_+^{(2)})+\textrm{II}_1(z_-^{(2)}, z_+^{(2)})
=\mu_1\triangle \psi_{+}^{(2)}+\mu_2J_-,\\
\partial_t \psi_{-}^{(2)}+Z_+^{(1)}\cdot\nabla \psi_{-}^{(2)}+\textrm{II}_2(z_+^{(1)},z_-^{(2)})+\textrm{II}_1(z_+^{(2)}, z_-^{(2)})=\mu_1\triangle \psi_{-}^{(2)}+\mu_2J_+.
\end{array}\right.
\end{align}

A direct calculation shows
\beno
&&-\big(\triangle^{-1}\text{curl}(u\cdot\nabla w)\big)^{jk}=\triangle^{-1}(\partial_k\partial_i(u^iw^j)-\partial_j\partial_i(u^iw^k))=-R_kR_i(u^iw^j)+R_jR_i(u^iw^k),\\
&&-\big(u\cdot\nabla(\triangle^{-1}\text{curl }w)\big)^{jk}=u^i\partial_i\triangle^{-1}(\partial_kw^j-\partial_jw^k)=u^i(-R_iR_kw^j+R_iR_jw^k),
\eeno
where $R_i$ is the Riesz transform defined by $R_i=\pa_i(-\Delta)^{-\f12}$. This gives
\ben\label{eq:Pi-2}
\textrm{II}_2(u,w)^{jk}=[u^i,R_iR_j]w^k-[u^i,R_iR_k]w^j.
\een

\subsection{Weighted $C^{1,\al}$ estimates for the parabolic equation}

We consider the parabolic equation with variable coefficients
\ben\label{eq:para}
\partial_tu-\gamma\partial_i\big(a_{ij}\partial_j u\big)+F_1+F_2+\partial_iG^i=0,
\een
where $\gamma>0$ and the coefficients $a_{ij}(t,X)$ satisfies
\ben\label{ass:coef}
\sup_{t\in [0,T]}\big(|a_{ij}(t)-\delta_{ij}|_0+(1+\gamma t)^{\a/2}[a_{ij}(t)]_{\a}\big)\leq \varepsilon_0,
\een
for some $\al\in (0,1), \e_0>0$ and $T>0$.

Let $f(t,X), h(t,X)$ be two weight functions satisfying \eqref{ass:W} with a uniform constant $c_0$ independent of $t$ and
\ben\label{ass:h}
\int_0^tH(2\ga(t-s))h(s,X)ds\leq c_0^{-1}f(t,X),\quad H(2\gamma(t-s))f(s,X)\le c_0^{-1}f(t,X)
\een
for all $0\le {s< t}\leq T,\,X\in \R^d$,
where
\beno
H(t)\phi(X)=\frac{1}{(4\pi t)^{d/2}}\int_{\R^d}e^{-\frac{|X-Y|^2}{4t}}\phi(Y)dY.
\eeno

Let $\delta>0$. We introduce
\begin{align*}
\Lambda_1(T,&F_1,F_2,G,f,h)\tre \sup_{0< t\leq T}\Big(|F_1(t)|_{1,\a;h(t),(1+\ga t)^{1/2}}\\
&+\ga^{-1}\big((\ga t)^{\frac{1}{2}}+(\ga t)^{1+\frac{\delta}{2}}\big)|F_2(t)|_{0,\al;f(t)}
+\ga^{-1}(1+\ga t)^{\frac{1}{2}}|G(t)|_{0,\a;f(t),(1+\ga t)^{1/2}}\Big),
\end{align*}
and
\begin{align*}
\Lambda_0(T,&F_1,F_2,G,f,h)\tre\sup_{0< t\leq T}\Big(|F_1(t)|_{1,\a;h(t),(\ga t)^{1/2}}\\
&+\ga^{-1}\big((\ga t)^{1-\frac{\a}{2}}+(\ga t)^{1+\frac{\delta}{2}}\big)|F_2(t)|_{0,\a;f(t)}
+\ga^{-1}((\ga t)^{\frac{1}{2}}+(\ga t)^{\frac{1-\a}{2}})|G(t)|_{0,\a;f(t),(\ga t)^{1/2}}\Big).
\end{align*}

\begin{proposition}\label{prop:parabolic}
There exist $\varepsilon_0>0$ and $C>0$ independent of $\ga$ and $T$ such that
\begin{align*}
&\sup_{0< t\leq T}|u(t)|_{1,\a;f(t),(1+\ga t)^{1/2}}\leq C\big(|u(0)|_{1,\a;f(0){,1}}+\Lambda_1(T,F_1,F_2,G,f,h)\big),\\
&\sup_{0< t\leq T}|u(t)|_{1,\a;f(t),(\ga t)^{1/2}}\leq C\big(|u(0)|_{0,\a;f(0)}+\Lambda_0(T,F_1,F_2,G,f,h)\big).
\end{align*}

\end{proposition}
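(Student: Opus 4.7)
The plan is to rewrite \eqref{eq:para} as a heat equation perturbed by a small divergence-form term, apply Duhamel's formula, and then combine heat-kernel smoothing in weighted H\"older spaces with the time-integration estimate of Lemma \ref{lem:inhom}. Setting $b_{ij}=a_{ij}-\delta_{ij}$, we view the equation as
\begin{equation*}
\partial_t u-\gamma\Delta u=\gamma\partial_i(b_{ij}\partial_j u)-F_1-F_2-\partial_i G^i,
\end{equation*}
so that Duhamel gives
\begin{equation*}
u(t)=H(\gamma t)u(0)+\int_0^t H(\gamma(t-s))\bigl[\gamma\partial_i(b_{ij}\partial_j u)-F_1-F_2-\partial_i G^i\bigr](s)\,ds.
\end{equation*}
The idea is to bound each piece in the scaled weighted norm $|\cdot|_{1,\alpha;f(t),(1+\gamma t)^{1/2}}$ (resp.\ $|\cdot|_{1,\alpha;f(t),(\gamma t)^{1/2}}$) and then close the estimate by absorbing the error arising from $b_{ij}$ via the smallness assumption \eqref{ass:coef}.

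First I would establish the core heat-kernel lemma in weighted H\"older spaces: using the two propagation inequalities in \eqref{ass:h}, standard kernel computations yield, for any $\phi$ and $0<s<t\le T$,
\begin{equation*}
|H(\gamma(t-s))\phi|_{0,\alpha;f(t),R}\lesssim |\phi|_{0,\alpha;h(s),R'},\qquad |\nabla H(\gamma(t-s))\phi|_{0,\alpha;f(t),R}\lesssim (\gamma(t-s))^{-1/2}|\phi|_{0,\alpha;h(s),R'},
\end{equation*}
together with the $(\gamma(t-s))^{-1/2-\alpha/2}$ smoothing from $C^{0}$ to $\nabla C^{0,\alpha}$. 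The initial-data term $H(\gamma t)u(0)$ is handled directly by this and the hypothesis on $u(0)$. For the three source terms, I apply Lemma \ref{lem:inhom} with the weight $h\rightsquigarrow f(\cdot)$ and the parameter $k\in\{1,0\}$ corresponding to the two conclusions: the input $u(s)$ in Lemma \ref{lem:inhom} is $H(\gamma(t-s))$ acting on $F_1(s)$, $F_2(s)$ or $\partial_iG^i(s)$, and the three summands in $A$ are designed precisely to match the heat-kernel decay rates $(\gamma(t-s))^{-\beta}$ produced by differentiating $H$. The definitions of $\Lambda_1$ and $\Lambda_0$ are tailored so that each input from $F_1,F_2,G$ contributes at most a constant to Lemma \ref{lem:inhom}'s right-hand side after multiplying by $\gamma$, giving the claimed bounds.

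The main obstacle is controlling the variable-coefficient error term $\gamma\int_0^t H(\gamma(t-s))\partial_i(b_{ij}\partial_j u)(s)\,ds$. I would treat it as a divergence-form source with $G^i_{\mathrm{err}}(s)=-\gamma b_{ij}(s)\partial_j u(s)$ and plug it into the $G$-branch of the Duhamel estimate above. By Lemma \ref{lem:product} and assumption \eqref{ass:coef},
\begin{equation*}
|b_{ij}(s)\partial_j u(s)|_{0,\alpha;f(s),(1+\gamma s)^{1/2}}\lesssim \bigl(|b_{ij}(s)|_0+(1+\gamma s)^{\alpha/2}[b_{ij}(s)]_\alpha\bigr)|\nabla u(s)|_{0,\alpha;f(s),(1+\gamma s)^{1/2}}\lesssim \varepsilon_0\,|u(s)|_{1,\alpha;f(s),(1+\gamma s)^{1/2}},
\end{equation*}
where the final step uses that $|\nabla u|$ controls the top-order piece of the $1,\alpha$ norm with the correct $(1+\gamma s)^{1/2}$ scaling (and analogously with $(\gamma s)^{1/2}$ for the second inequality). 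The factor of $\gamma$ appearing from $\gamma b_{ij}\partial_j u$ is exactly compensated by the $\gamma^{-1}$ in the Duhamel $G$-estimate, so the resulting bound is
\begin{equation*}
C\varepsilon_0\sup_{0<s\le t}|u(s)|_{1,\alpha;f(s),(1+\gamma s)^{1/2}}.
\end{equation*}
Choosing $\varepsilon_0$ sufficiently small so that $C\varepsilon_0\le \tfrac12$ lets me absorb this term into the left-hand side after taking $\sup_{0<t\le T}$, which yields the first asserted inequality; the second one is obtained by the same argument with the weight $(\gamma t)^{1/2}$ in place of $(1+\gamma t)^{1/2}$ and $|u(0)|_{0,\alpha;f(0)}$ in place of $|u(0)|_{1,\alpha;f(0),1}$. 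Technical care is required to verify that $f(s)\rightsquigarrow f(t)$ transitions are all covered by \eqref{ass:h}, and that Lemma \ref{lem:inhom} is applied with the correct scaling parameter $\sqrt{k+\gamma t}$ in each of the two regimes; once these bookkeeping points are in place, the estimate closes and the proposition follows.
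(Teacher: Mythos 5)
Your overall architecture coincides with the paper's: Duhamel for the heat semigroup, weighted heat-kernel smoothing (Lemma \ref{lem:heat2}), Lemma \ref{lem:inhom} for the divergence-form source, and absorption of the variable-coefficient term $\ga(a_{ij}-\delta_{ij})\partial_j u$ as an extra divergence-form source made small by \eqref{ass:coef}; that last step, including the $\ga\cdot\ga^{-1}$ bookkeeping and the choice $C\e_0\le \tfrac12$, matches the paper exactly.

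The one step that would fail as written is routing all three sources $F_1,F_2,G$ through Lemma \ref{lem:inhom}. That lemma needs the integrand to carry the explicit time weights $(\ga s)^{1/2}(\ga(t-s))^{1/2}$, etc., and neither $\La_1$ nor $\La_0$ supplies them for $F_1$ or $F_2$. For $F_1$ the functional $\La_k$ contains only $\sup_t|F_1(t)|_{1,\a;h(t),(k+\ga t)^{1/2}}$ with no time decay whatsoever, so $\int_0^t H(\ga(t-s))F_1(s)\,ds$ can only be controlled by trading the time integral for a change of weight: one must use the first inequality of \eqref{ass:h}, $\int_0^t H(2\ga(t-s))h(s)\,ds\le c_0^{-1}f(t)$, together with the third inequality of Lemma \ref{lem:product}, which yields a sup-in-time bound with the integral absorbed into the weight --- a genuinely different mechanism from Lemma \ref{lem:inhom}. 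For $F_2$, if one feeds $u(s)=H(\ga(t-s))F_2(s)$ into Lemma \ref{lem:inhom}, the first summand of the right-hand side is (up to constants) $(\ga s)^{1/2}(\ga(t-s))^{1/2}|F_2(s)|_{0,\a;f(s)}$, and with $|F_2(s)|_{0,\a;f(s)}\le \ga\La_1\big((\ga s)^{1/2}+(\ga s)^{1+\delta/2}\big)^{-1}$ this behaves like $\ga\La_1(\ga t)^{1/2}$ when $\ga s\sim 1$ and $t$ is large, which is unbounded; the correct argument integrates the smoothing ratio $\varphi_\a(\sqrt{k+\ga t})/\varphi_\a(\sqrt{\ga(t-s)})$ from Lemma \ref{lem:heat2} against the decay $\min\big((\ga s)^{-1+\cdots},(\ga s)^{-1-\delta/2}\big)$ directly. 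Only the $\partial_iG^i$ term actually goes through Lemma \ref{lem:inhom} (combined with Lemma \ref{lem:heat1}). With $F_1$ and $F_2$ rerouted in this way, your plan closes and reproduces the paper's proof.
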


\begin{proof}
Let us first consider the case of $a_{ij}=\delta_{ij}$. Then we get
\beno
u(t)=H(\ga t)u(0)+\int_0^t\big(H(\ga (t-s))(F_1(s)+F_2(s))+\partial_iH(\ga (t-s))G^i(s)\big)ds.
\eeno
Using $H(2\gamma t)f(0)\le c^{-1}_0f(0,X)$, we get by  Lemma \ref{lem:heat2} that
\beno
&&|H(\ga t)u(0)|_{1,\a;f(t),(1+\ga t)^{1/2}}\leq C|H(\ga t)u(0)|_{1,\a;H(2\ga t)f(0),(1+\ga t)^{1/2}}\leq C|u(0)|_{1,\a;f(0),1},\\
&&|H(\ga t)u(0)|_{1,\a;f(t),(\ga t)^{1/2}}\leq C|H(\ga t)u(0)|_{1,\a;H(2\ga t)f(0),(\ga t)^{1/2}}\leq C|u(0)|_{0,\a;f(0)}.
\eeno
By  \eqref{ass:h} and Lemma \ref{lem:heat2}, we have
\beno
&&\left|\int_0^tH(\ga (t-s))F_1(s)ds\right|_{1,\a;f(t),(k+\ga t)^{1/2}}\\
&&\leq C\sup_{0<s<t}|H(\ga (t-s))F_1(s)|_{1,\a;H(2\ga(t-s))h(s),(k+\ga t)^{1/2}}\\
&&=C\sup_{0<s<t}|H(\ga (t-s))F_1(s)|_{1,\a;H(2\ga(t-s))h(s),(k+\ga s+\ga (t-s))^{1/2}}\\
&&\leq C\sup_{0<s<t}|F_1(s)|_{1,\a;h(s),(k+\ga s)^{1/2}},
\eeno
and by Lemma \ref{lem:heat2},
\beno
&&\left|\int_0^tH(\ga (t-s))F_2(s)ds\right|_{1,\a;f(t),(k+\ga t)^{1/2}}\\
&&\leq C\int_0^t|H(\ga (t-s))F_2(s)|_{1,\a;H(2\ga(t-s))f(s),(k+\ga t)^{1/2}}ds\\ &&\leq C\int_0^t\varphi_{\a}(\sqrt{k+\ga t})/\varphi_{\a}(\sqrt{\ga(t-s)})|F_2(s)|_{0,\al;f(s)}ds
\eeno
for $k=0,1$. Recall that $\varphi_\al(R)=\max(R,R^{1+\al})$, for $k=0,1$,
\begin{align*}
&\int_0^t\varphi_{\a}(\sqrt{\ga(t-s)})^{-1}\min((\ga s)^{-1+\frac{\a+k(1-\a)}{2}},(\ga s)^{-1-\frac{\delta}{2}})ds\\
&\leq \int_0^t({\ga(t-s)})^{-\frac{1}{2}}(\ga s)^{-1+\frac{\a+k(1-\a)}{2}}ds\\
&\leq C\ga^{-1}(\ga t)^{-\frac{(1-k)(1-\a)}{2}},
\end{align*}
and
\begin{align*}
&\int_0^t\varphi_{\a}(\sqrt{\ga(t-s)})^{-1}\min\big((\ga s)^{-1+\frac{\a+k(1-\a)}{2}},(\ga s)^{-1-\frac{\delta}{2}}\big)ds\\
&\leq \int_0^{t}({\ga(t-s)})^{-\frac{1+\a}{2}}\min\big((\ga s)^{-1+\frac{\a+k(1-\a)}{2}},(\ga s)^{-1-\frac{\delta}{2}}\big)ds\\
&\leq C\int_0^{\frac{t}{2}}({\ga t})^{-\frac{1+\a}{2}}\min\big((\ga s)^{-1+\frac{\a+k(1-\a)}{2}},(\ga s)^{-1-\frac{\delta}{2}}\big)ds+ \int_{\frac{t}{2}}^{t}({\ga(t-s)})^{-\frac{1+\a}{2}}(\ga t)^{-1}ds
\\&\leq C\ga^{-1}(\ga t)^{-\frac{1+\a}{2}}.
\end{align*}
Thus, we have
\beno
&&\int_0^t\varphi_{\a}(\sqrt{k+\ga t})/\varphi_{\a}(\sqrt{\ga(t-s)})\min\big((\ga s)^{-1+\frac{\a+k(1-\a)}{2}},(\ga s)^{-1-\frac{\delta}{2}}\big)ds\\
&&\le C\ga^{-1}\max\big((k+\ga t)^\f12 ,(k+\ga t)^{\f {1+\al} 2}\big)
\min\big((\ga t)^{-\frac{(1-k)(1-\a)}{2}},(\ga t)^{-\frac{1+\a}{2}}\big)\\
&&\le C\ga^{-1}.
\eeno
Therefore, we deduce that for $k=0,1$ and $j=1, 2$,
\beno
 \left|\int_0^tH(\ga (t-s))F_j(s)ds\right|_{1,\a;f(t),(k+\ga t)^{1/2}}\leq C\Lambda_k(T,F_1,F_2,G,f,h).
\eeno

It follows from Lemma \ref{lem:inhom} and Lemma \ref{lem:heat1} that
for $k=0,1$,
\beno
&&\left|\int_0^t\partial_iH(\ga (t-s))G^i(s))ds\right|_{1,\a;f(t),(k+\ga t)^{1/2}}\\
&&\leq C\ga^{-1}\sup\limits_{0<s<t}\Big((\ga s)^{\frac{1}{2}}(\ga (t-s))^{\frac{1}{2}}|\partial_iH(\ga (t-s))G^i(s)|_{0,\a;f(t)}\\
&&\qquad+\varphi_{\a}(\sqrt{k+\ga s})(\ga (t-s))^{1-\frac{\a}{2}}|\nabla \partial_iH(\ga (t-s))G^i(s)|_{0;f(t)}\\&&\qquad+\varphi_{\a}(\sqrt{k+\ga s})(\ga (t-s))^{\frac{3-\a}{2}}[\nabla \partial_iH(\ga (t-s))G^i(s)]_{1;f(t)}\Big)\\
 &&\leq C\ga^{-1}\sup\limits_{0<s<t}((\ga s)^{\frac{1}{2}}|G(s)|_{0,\a;f(s)}+\varphi_{\a}(\sqrt{k+\ga s})[G(s)]_{\a;f(s)}) \\ &&\leq C\ga^{-1}\sup\limits_{0<s<t}((k+\ga s)^{\frac{1}{2}}+(k+\ga s)^{\frac{1-\a}{2}})|G(s)|_{0,\a;f(s),(k+\ga s)^{\frac{1}{2}}}\\
 &&\leq C\Lambda_k(T,F_1,F_2,G,f,h).
 \eeno

 Summing up, we conclude the proof for the case $a_{ij}=\delta_{ij}$.

 To deal with the general case,  we rewrite (\ref{eq:para})  as
\beno
\partial_tu-\gamma\triangle u+F_1+F_2+\partial_i\widehat{G}^i=0,
 \eeno
 where $\widehat{G}^i={G}^i-\ga(a_{ij}-\delta_{ij})\partial_j u$. Thus, we have
  \beno
  \sup_{0< t\leq T}|u(t)|_{1,\a;f(t),(k+\ga t)^{1/2}}\leq C\big(|u(0)|_{1,\a;f(0)\underline{,k}}+\Lambda_k(T,F_1,F_2,\widehat{G},f,h)\big),
  \eeno
for $k=0,1$, where
 \begin{align*}
  \Lambda_k(T,F_1,F_2,\widehat{G},f,h)&\leq \Lambda_k(T,F_1,F_2,{G},f,h)\\&+\sup_{0\leq t\leq T}\sup_{i}((k+\ga t)^{\frac{1}{2}}+(\ga t)^{\frac{1-\a}{2}})|(a_{ij}-\delta_{ij})\partial_j u(t)|_{0,\a;f(t),(k+\ga t)^{\frac{1}{2}}},
 \end{align*}
 and by \eqref{ass:coef},
 \begin{align*}
\big|(a_{ij}-\delta_{ij})\partial_j u(t)\big|_{0,\a;f(t),(k+\ga t)^{\frac{1}{2}}}\leq& C|a_{ij}(t)-\delta_{ij}|_{0,\a;1,(k+\ga t)^{\frac{1}{2}}}|\partial_j u(t)|_{0,\a;f(t),(k+\ga t)^{\frac{1}{2}}}\\
\leq& C\varepsilon_0|\nabla u(t)|_{0,\a;f(t),(k+\ga t)^{\frac{1}{2}}}\\\leq& C\varepsilon_0\min((k+\ga t)^{-\frac{1}{2}},(k+\ga t)^{-\frac{1-\a}{2}})| u(t)|_{1,\a;f(t),(k+\ga t)^{\frac{1}{2}}}.
\end{align*}
This shows that
\begin{align*}
\sup_{0< t\leq T}|u(t)|_{1,\a;f(t),(k+\ga t)^{1/2}} \leq C\Big(&|u(0)|_{1,\a;f(0)}+\Lambda_k(T,F_1,F_2,{G},f,h)\\
&+\varepsilon_0\sup_{0\leq t\leq T}|u(t)|_{1,\a;f(t),(k+\ga t)^{1/2}}\Big),
\end{align*}
which gives the desired result by taking $\e_0$ so that $C\varepsilon_0\leq \frac{1}{2}.$
\end{proof}

\subsection{Weighted $C^{1,\al}$ estimates for the transport-diffusion equation}
We consider the transport-diffusion equation with general form
\ben\label{eq:TD-E}
\partial_t u+Z\cdot\nabla u-\gamma\triangle u+F_1+F_2+\partial_iG^i=0,\quad u(0,X)=u_0(X).
\een

Given the divergence free vector field $Z(t,X)\in C^1([0,T]\times \R^d)$ and $s\in [0,T]$, we define
\beno
\f d {dt}\Phi(s,t,X)=Z(t,\Phi(s,t,X)),\quad \Phi(s,s,X)=X.
\eeno
We denote by $D\Phi$ and $\na \Phi$ the matrix with the convention
\beno
(D\Phi)_{ij}=\partial_j\Phi^{i},\quad (\nabla\Phi)_{ij}=\partial_i\Phi^{j}.
\eeno
That is, $(D\Phi)=(\nabla\Phi)^{T}$. We introduce
\beno
b=(D\Phi)^{-1},\quad a=(D\Phi)^{-1}(\nabla\Phi)^{-1},\quad a_{ij}=b_{ik}b_{kj}.
\eeno
For $v(t,X)$ defined in $[0,T]\times \R^d$, we denote
\beno
v^*(t,X)\tre v(t,\Phi(s,t,X)).
\eeno
Using the formulas
\beno
(\text{div}G)\circ\Phi=\text{div}((D\Phi)^{-1} G\circ\Phi),\ \ (\triangle u)\circ\Phi=\text{div}((D\Phi)^{-1}(\nabla\Phi)^{-1} \nabla u\circ\Phi),
\eeno
we can tranform (\ref{eq:TD-E})  into the following form
\ben\label{eq:TD-L}
\partial_t u^*(t)-\gamma\partial_i\big(a_{ij}\partial_j u^*(t)\big)+F_1^*+F_2^*+\partial_iG_*^i=0,
\een
where $G_*^i=b_{ij}(G^*)^j$.
\smallskip

We introduce the weight function $f(t,X), \widehat{f}(t,X), h(t,X)$, which satisfy (\ref{ass:W}) with a uniform constant $c_0$ and
\ben\label{ass:W2}
\begin{split}
&\int_0^tH(2\ga(t-s))h_{\pm}(s,X)ds\leq c_0^{-1}\widehat{f}(t,X)\quad \text{for all } 0\leq t\leq T,\, X\in \R^d,\\
&\int_0^T f_{\pm}(t,X\pm B_0t)dt=\int_0^T f(t,X\pm 2B_0t)dt\leq c_0^{-1},\\
&H(2\ga(t-s))\widehat{f}(s,X)\le c_0^{-1}\widehat{f}(t,X)\quad \text{for all } 0\le s\le t\le T,\, X\in \R^d,
\end{split}
\een
where we denote
\beno
f_{\pm}(t, X)=U(\pm t)f(t,X), \quad U(t)f(s,X)=f(s,X+B_0t).
\eeno

\begin{proposition}\label{prop:tran-HW1}
There exists $\varepsilon_1>0$ and $C>0$ independent of $\ga$ and $T$ such that if
$$|Z(t)+ B_0|_{1,\a;f_-(t),(1+\ga t)^{1/2}}<\varepsilon_1,$$ and (\ref{ass:W2}) holds for the minus sign,
then it holds that for $k=0,1,$
\beno
\sup_{0\leq t\leq T}|u(t)|_{1,\a;\widehat{f}_+(t),(k+\ga t)^{1/2}}\leq C\big(|u_0|_{1,\a;\widehat{f}(0),k}+\Lambda_k(T,F_1,F_2,{G},\widehat{f}_+,h)\big).
\eeno
Similarly, if
$$|Z(t)- B_0|_{1,\a;f_+(t),(1+\ga t)^{1/2}}<\varepsilon_1,$$ and (\ref{ass:W2}) holds for the plus sign,
then it holds that for $k=0,1,$
\beno
\sup_{0\leq t\leq T}|u(t)|_{1,\a;\widehat{f}_-(t),(k+\ga t)^{1/2}}\leq C\big(|u_0|_{1,\a;\widehat{f}(0),k}+\Lambda_k(T,F_1,F_2,{G},\widehat{f}_-,h)\big).
\eeno
\end{proposition}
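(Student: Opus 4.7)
The plan is to reduce the transport-diffusion equation \eqref{eq:TD-E} to the pure parabolic equation \eqref{eq:TD-L} for $u^*(t,X)=u(t,\Phi(s,t,X))$ via the Lagrangian change of variables already recorded in the setup, and then apply Proposition~\ref{prop:parabolic} to $u^*$. I describe the minus-sign case; the plus-sign case is symmetric under $B_0\to-B_0$.

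\textbf{Step 1 (flow control).} Mimicking the bootstrap argument in the proof of Proposition~\ref{prop:tran-HW}, the hypothesis $|Z+B_0|_{1,\a;f_-,(1+\ga t)^{1/2}}<\varepsilon_1$ together with the Alfv\'en integrability in the second line of \eqref{ass:W2} yields
$$|\Phi(s,t,X)+B_0(t-s)-X|\leq C\varepsilon_1 \quad \text{for all } 0\le t\le s\le T.$$
Then an analogue of Lemma~\ref{lem:char} carried out in the scaled weighted space $|\cdot|_{1,\a;\cdot,(1+\ga t)^{1/2}}$ gives $|\nabla\Phi(s,t)-Id|_{0}\leq C\varepsilon_1$ together with the scaled H\"older bound $(1+\ga t)^{\a/2}[\nabla\Phi(s,t)]_{\a}\leq C\varepsilon_1$.

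\textbf{Step 2 (coefficient and weight hypotheses).} From Step~1, $a_{ij}=b_{ik}b_{kj}$ with $b=(D\Phi)^{-1}$ satisfies \eqref{ass:coef} with $\varepsilon_0=C\varepsilon_1$, so \eqref{ass:coef} can be enforced by choosing $\varepsilon_1$ small enough relative to the $\varepsilon_0$ of Proposition~\ref{prop:parabolic}. Moreover, because $\Phi(s,t,X)$ differs from the exact Alfv\'en translation $X-B_0(s-t)$ by a bounded amount, the weights $\widehat{f}_+\circ\Phi$ and $h_+\circ\Phi$ are pointwise comparable to $\widehat{f}_+(s,\cdot)$ and $h_+(s,\cdot)$ via \eqref{ass:W}; the heat-semigroup hypothesis \eqref{ass:h} required by Proposition~\ref{prop:parabolic} is thus a direct consequence of the first and third lines of \eqref{ass:W2} applied to the translated weights.

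\textbf{Step 3 (apply Proposition~\ref{prop:parabolic} and pull back).} Proposition~\ref{prop:parabolic} applied to \eqref{eq:TD-L} produces a bound on $|u^*(t)|_{1,\a;\widehat{f}(t),(k+\ga t)^{1/2}}$ in terms of $|u_0|_{1,\a;\widehat{f}(0),k}$ and $\Lambda_k(T,F_1^*,F_2^*,G_*,\widehat{f},h)$ for $k=0,1$. Using Lemma~\ref{lem:comp1} together with the flow bounds of Step~1, the weighted norms of the starred quantities are equivalent, up to uniform constants, to the corresponding Eulerian norms of $u$, $F_1$, $F_2$, $G$ measured against $\widehat{f}_+$ and $h_+$. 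Composing back through $\Phi^{-1}$ then yields the desired estimate.

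\textbf{Main obstacle.} The delicate step is the bookkeeping in Steps~1--2: one must track the flow simultaneously at two different scales --- the $L^{\infty}$ Alfv\'en closeness (fed by the integrability in \eqref{ass:W2}) and the scaled $C^{1,\a}$ bound on $\nabla\Phi$ (fed by smallness of $\nabla Z$ in the norm with scale $(1+\ga t)^{1/2}$) --- and then verify that both rescale in a compatible way with the parabolic weights, so that \eqref{ass:coef} and \eqref{ass:h} hold with the correct constants after the change of variables. Once this bookkeeping is in place, the estimate is assembled by routine combination of the ingredients prepared in Sections~2 and~3 and in the preceding subsection.
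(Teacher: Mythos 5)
Your proposal is correct and follows essentially the same route as the paper: Lagrangian change of variables to reduce \eqref{eq:TD-E} to \eqref{eq:TD-L}, flow control via the Alfv\'en integrability in \eqref{ass:W2} and the smallness of $\nabla Z$ in the scaled weighted norm (which, through Lemma \ref{lem:char} with time-dependent $A_0\le C\varepsilon_1(1+\ga t)^{-1/2}$, yields \eqref{ass:coef}), verification of the heat-semigroup hypothesis for the translated weights $U(s)\widehat f$, $U(s-\cdot)h$ by commuting $H$ with translations, and transfer of the data norms via Lemma \ref{lem:comp1}. The only cosmetic difference is that the paper fixes $s$, runs Proposition \ref{prop:parabolic} with the translated (rather than flow-composed) weight, and reads off the conclusion only at $t=s$ where $\Phi(s,s)=Id$ and $U(s)\widehat f(s)=\widehat f_+(s)$, thereby avoiding the "compose back through $\Phi^{-1}$" step of your Step 3.
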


\begin{proof}
We only consider the case of $|Z(t)+B_0|_{1,\a;f_-(t),(1+\ga t)^{1/2}}<\varepsilon_1$.  In this case, similar to \eqref{claim:phi}, we have
\beno
|\Phi(s,t,X)+B_0(t-s)-X|<2\quad \text{for}\quad  0\leq t\leq s\leq T.
\eeno
Then we get by (\ref{ass:W}) and (\ref{ass:W2})  that
\begin{align}
& \sup_{t\leq s\leq T}|\nabla Z(s)|_{0,\a;f_-(s)}\int_{0}^Tf_-\big(s, \Phi(T,s,X)\big) ds\nonumber\\
&\leq \varepsilon_1(1+\ga t)^{-1/2}c_0^{-1}\int_{0}^Tf_-\big(s, X-B_0(s-T)\big) ds\leq \varepsilon_1(1+\ga t)^{-1/2}c_0^{-1},\label{eq:w-p3}
 \end{align}
 and by \eqref{ass:W},
 \begin{align}\label{eq:w-p1}
 &U(s-t)h(t)=h(t,X+B_0(s-t))\ge c_0h(t)\circ\Phi(s,t),\\
 &U(s)\widehat{f}(t)=U(s-t)\widehat{f}_{+}(t)\ge c_0\widehat{f}_{+}(t)\circ\Phi(s,t).\label{eq:w-p2}
\end{align}

Now we fix $s\ge 0$ and assume $0\leq t\leq s\leq T$. With \eqref{eq:w-p3}, we infer from Lemma \ref{lem:char}  that
\ben\label{eq:Phi-p1}
|\nabla \Phi(s,t)-Id|_{0,\a}\leq C\varepsilon_1(1+\ga t)^{-1/2}.
\een
This implies that
\beno
|a_{ij}(t)-\delta_{ij}|_{0,\a}\leq C\varepsilon_1(1+\ga t)^{-1/2},\quad |b_{ij}(t)|_{0,\a;1,(1+\ga t)^{1/2}}\leq C.
\eeno
Using \eqref{ass:W2}, it is easy to verify that
\beno
&&H(2\ga(t-\tau))U(s)\widehat{f}(\tau,X)=U(s)H(2\ga(t-\tau))\widehat{f}(\tau,X)\leq c_0^{-1}U(s)\widehat{f}(t),
\eeno
and
\begin{align*}
\int_0^tH(2\ga(t-\tau))U(s-\tau)h(\tau,X)d\tau=&\int_0^tH(2\ga(t-\tau))U(s)h_{-}(\tau,X)d\tau\\
\leq& c_0^{-1}U(s)\widehat{f}(t).
\end{align*}
Therefore, if we take $\varepsilon_1>0$ so that $C\varepsilon_1\leq \varepsilon_0$, then we can apply Proposition \ref{prop:parabolic} to obtain
\beno
&&\sup_{0< t\leq s}|u^*(t)|_{1,\a;U(s)\widehat{f}(t),(k+\ga t)^{1/2}}\\
&&\leq C\big(|u_0\circ\Phi(s,0)|_{1,\a;U(s)\widehat{f}(0),k}+\Lambda_k(s,F_1^*,F_2^*,{G_*},U(s)\widehat{f},U(s-\cdot)h)\big).
\eeno

Thanks to \eqref{eq:Phi-p1}, we get by Lemma \ref{lem:comp1}, (\ref{eq:w-p1}) and \eqref{eq:w-p2} that
\begin{align*}
&|u_0\circ\Phi(s,0)|_{1,\a;U(s)\widehat{f}(0),k}\leq C|u_0\circ\Phi(s,0)|_{1,\a;\widehat{f}(0)\circ\Phi(s,0),k}\leq C|u_0|_{1,\a;\widehat{f}(0),k},\\
&|F_2^*(t)|_{0,\a;U(s)\widehat{f}(t)}\leq C|F_2^*(t)|_{0,\a;\widehat{f}_+(t)\circ\Phi(s,t)}\leq C|F_2(t)|_{0,\a;\widehat{f}_+(t)},\\
&|F_1^*(t)|_{1,\a;U(s-t)h(t),(k+\ga t)^{1/2}}\leq C|F_1^*(t)|_{1,\a;h(t)\circ\Phi(s,t),(k+\ga t)^{1/2}}\leq C|F_1(t)|_{1,\a;h(t),(k+\ga t)^{1/2}},
\end{align*}
and
\begin{align*}
|G_{*}(t)|_{0,\a;U(s)\widehat{f}(t),(k+\ga t)^{1/2}}\leq& C|G_{*}(t)|_{0,\a;\widehat{f}_+(t)\circ\Phi(s,t),(k+\ga t)^{1/2}}\\
\leq& C|b(t)|_{0,\a;1, (1+\ga t)^{1/2}}|G(t)\circ\Phi(s,t)|_{0,\a;\widehat{f}_+(t)\circ\Phi(s,t),(k+\ga t)^{1/2}}\\
\leq& C|G(t)|_{0,\a;\widehat{f}_+(t),(k+\ga t)^{1/2}}.
\end{align*}
This proves
\beno
\Lambda_k(s,F_1^*,F_2^*,{G_*},U(s)\widehat{f},U(s-t)h)\leq C\Lambda_k(s,F_1,F_2,{G},\widehat{f}_+,h).
\eeno
Therefore, we conclude
\beno
\sup_{0< t\leq s}|u^*(t)|_{1,\a;U(s)\widehat{f}(t),(k+\ga t)^{1/2}}
\le C\big(|u_0|_{1,\a;\widehat{f}(0),k}+\Lambda_k(s,F_1,F_2,{G},\widehat{f}_+,h)\big).
\eeno
Thanks to $u^*(s)=u(s)$ and $U(s)\widehat{f}(s)=\widehat{f}_+(s)$, we have \begin{align*}
|u(s)|_{1,\a;\widehat{f}_+(s),(k+\ga s)^{1/2}}\leq C\big(|u_0|_{1,\a;\widehat{f}(0),k}+\Lambda_k(s,F_1,F_2,{G},\widehat{f}_+,h)\big)
\end{align*} for all $0< s\leq T$. The case of $s=0$ is trivial. This completes the proof.
\end{proof}

\subsection{Main result}

Let us first introduce the weight functions
\beno
f(t)=H(1+2\mu_1t)\phi_1,\quad f_1(t)=H(1+2\mu_1t)\phi_0,
\eeno
where if $B_0=(1,0,\cdots,0)$, we may take
\ben\label{eq:weight}
\phi_1(X)=|x_1^2+x_2^2|^{-\frac{1+\delta}{2}},\quad\phi_0(X)=|x_2|^{-\delta}
\een
for some $0<\delta<\underline{\frac{1}{2}}$.  Let
\beno
g(t,X)\tre\int_{\R^d}\frac{f_+(t,Y)f_-(t,Y)}{1+|X-Y|^{d+1}}dY.
\eeno

We introduce
\beno M_{\pm}(t)\tre\sup_{0\leq \tau\leq t}\Big(|z_{\pm}^{(1)}(\tau)|_{1,\a;f_{\pm}(\tau),(1+\mu_1 \tau)^{1/2}}+|J_{\pm}^{(1)}(\tau)|_{1,\a;f_{\pm}(\tau),(\mu_1 \tau)^{1/2}}+\mu_1^{-1}|\psi_{\pm}^{(2)}(\tau)|_{1,\a;f_{1}(\tau),(\mu_1 \tau)^{1/2}}\Big).
\eeno

Main result of this section is stated as follows.

\begin{theorem}\label{thm:viscous}
Let $\al\in (0,1)$.
There exists $\epsilon_2>0$ so that if $M_\pm(0)+\mu_2/\mu_1\le \epsilon\le \epsilon_2$,
then there exists a global in time unique solution $(z_+,z_-)\in L^\infty((0,+\infty)\times\R^d)$ with the pressure $p$ determined by {(\ref{pressure-Rd})} to  the viscous MHD equations (\ref{eq:MHD-v1}) satisfying
\beno
M_\pm(t)\le C\epsilon\quad \text{for any}\quad t\in [0,+\infty).
\eeno
\end{theorem}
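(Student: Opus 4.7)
My plan is to prove Theorem \ref{thm:viscous} by a continuation/bootstrap argument on $M_\pm(t)$. Local well-posedness in the weighted H\"older framework is set up by an iteration scheme modeled on the ideal-case proof of Section 3, with the parabolic heat-kernel estimate of Proposition \ref{prop:parabolic} replacing the pure transport estimate. I then introduce the maximal time $T^*$ on which a solution exists satisfying $M_\pm(t)\le 2C\epsilon$, and aim to improve this to $M_\pm(t)\le C\epsilon$, which forces $T^*=+\infty$.

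Under this bootstrap hypothesis the transport coefficients $Z_\mp^{(1)} = z_\mp^{(1)}\mp B_0$ satisfy the smallness required by Proposition \ref{prop:tran-HW1}, since $|Z_\mp^{(1)}\pm B_0|_{1,\a;f_\mp(t),(1+\mu_1 t)^{1/2}} = |z_\mp^{(1)}|_{1,\a;f_\mp(t),(1+\mu_1 t)^{1/2}}\le C\epsilon<\varepsilon_1$. I apply the proposition three times. First, to \eqref{eq:MHD-z1-z} for $z_\pm^{(1)}$ with target weight $\widehat f = f$ at scale $(1+\mu_1 t)^{1/2}$ (the $k=1$ case): the convective source $z_\mp^{(2)}\cdot\nabla z_\pm^{(1)}$ enters the $F_1$ slot with auxiliary weight $h=g$ chosen via Lemma \ref{lem:weight}, while the nonlocal pressures $I(z_\mp^{(2)},z_\pm^{(1)})$ and $I(z_\mp^{(1)},z_\pm^{(1)})$ fit into either the $F_2$ or the $\partial_i G^i$ slot, using the weighted H\"older boundedness of the Riesz transform. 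Second, to \eqref{eq:MHD-z1-j} for $J_\pm^{(1)}$ at scale $(\mu_1 t)^{1/2}$ (the $k=0$ case, exploiting parabolic smoothing since no higher regularity of $J^{(1)}(0)$ is assumed). Third, to \eqref{eq:MHD-psi} for $\psi_\pm^{(2)}$ with weight $f_1$ as in \eqref{eq:weight}: the linear source $\mu_2 J_\mp$ produces a contribution of size $\mu_2 M_\mp/\mu_1$ after the normalization by $\mu_1$ in the definition of $M_\pm$, which accounts for the $\mu_2/\mu_1$ in the smallness hypothesis.

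The key bilinear estimates divide into two classes. The quadratic sources $\nabla z_\pm^{(1)}\cdot\nabla z_\mp^{(1)}$ and the pressure kernel $I(z_+^{(1)},z_-^{(1)})$ close against the weight $g$ thanks to the pointwise bound $f_+f_-\lesssim g$ and the time-integrability $\int_0^T g(t,X\pm B_0 t)\,dt\lesssim f(X)$ from Lemma \ref{lem:weight}; this is exactly the compatibility required by \eqref{ass:W2} to drive Proposition \ref{prop:tran-HW1}. The nonlocal operators $\textrm{II}_1,\textrm{II}_2$ in \eqref{eq:MHD-psi}, decomposed via \eqref{eq:Pi-2}, reduce to Riesz transforms and their commutators; they are controlled by the weighted H\"older bounds alluded to in the introduction. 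The slower-decaying weight $f_1$ (depending only on $x_2$) is selected precisely because a product weight like $f_+f_-$ would lose too much under a Riesz operator acting on the low-regularity quantity $z_\pm^{(2)}$.

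The principal obstacle is the asymmetric weight/regularity system: $z_\pm^{(1)}$ is controlled in $C^{1,\a}$ with weight $f_\pm$, whereas $z_\pm^{(2)}$ is recovered only from $\psi_\pm^{(2)}\in C^{1,\a}_{f_1}$ via a curl, so every coupling term must be rebalanced between the two weight classes with the correct $\mu_1$-scale power. In particular the $\mu_2 J_\mp$ source ties $\psi_\pm^{(2)}$ to the rate $(\mu_1 t)^{1/2}$, and the normalization $\mu_1^{-1}$ in $M_\pm$ is chosen so that all bilinear interactions close either quadratically in $M_\pm$ or linearly in $\mu_2/\mu_1$. Combining the three applications of Proposition \ref{prop:tran-HW1} produces an inequality of the form $M_\pm(t)\le C\bigl(M_\pm(0)+\mu_2/\mu_1+M_\pm(t)^2\bigr)$, which bootstraps to $M_\pm(t)\le C\epsilon$ on $[0,T^*)$ once $\epsilon_2$ is small enough, completing the argument.
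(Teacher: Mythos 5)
Your proposal follows essentially the same route as the paper: a continuity argument in which Proposition \ref{prop:tran-HW1} is applied three times --- to \eqref{eq:MHD-z1-z} for $z_\pm^{(1)}$ at scale $(1+\mu_1 t)^{1/2}$, to \eqref{eq:MHD-z1-j} for $J_\pm^{(1)}$ at scale $(\mu_1 t)^{1/2}$, and to \eqref{eq:MHD-psi} for $\psi_\pm^{(2)}$ with weight $f_1$ --- yielding $M_\pm\le C(M_\pm(0)+(M_\pm+\mu_2/\mu_1)M_\mp)$ and closing by smallness. The only slip is the slot assignment in Step 1: the paper puts the quadratic pressure $I(z_\mp^{(1)},z_\pm^{(1)})$ into $F_1$ with weight $g$ (it is the term with full $C^{1,\al}_{f_+f_-}$ control), while $z_\mp^{(2)}\cdot\nabla z_\pm^{(1)}+I(z_\mp^{(2)},z_\pm^{(1)})$ must go into $F_2$, since $z^{(2)}$ has only $C^{0,\al}$ regularity and cannot carry the $C^{1,\al}_g$ norm your $F_1$ placement would require.
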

\begin{remark}
Thanks to $M_\pm(0)\sim \big|z_\pm(0)\langle (x_1,x_2)\rangle^{1+\delta}\big|_{1,\al}$, this means that the initial data decays at infinity only in two directions. This is a key point for the
global well-posedness in the strip domain,
especially in $\R^2$ and $\R^2\times [0,1]$.
\end{remark}
To proceed, we need to verify that the weight functions introduced here
satisfy some key properties \eqref{ass:W1} and \eqref{ass:W2}.

With the choice of \eqref{eq:weight},  it is easy to check that for $k=0,1$,
\beno
&C^{-1}R^d\min(\phi_k(X),R^{-k-\delta})\leq\int_{B(X,R)}\phi_k(Y)dY\leq CR^d\min(\phi_k(X),R^{-k-\delta}),\\
&\int_{\R}\phi_1(X+B_0t)dt\leq C\phi_0(X),
\eeno
which imply
\ben\label{f}
&C^{-1}\min(\phi_1(X),(1+\mu_1t)^{-\frac{1+\delta}{2}})\leq f(t,X)\leq C\min(\phi_1(X),(1+\mu_1t)^{-\frac{1+\delta}{2}}),\\
&\label{f1} C^{-1}\min(\phi_0(X),(1+\mu_1t)^{-\frac{\delta}{2}})\leq f_1(t,X)\leq C\min(\phi_0(X),(1+\mu_1t)^{-\frac{\delta}{2}}),\\
&\int_{\R}f(t,X+B_0s)ds\leq Cf_1(t,X).
\een
Therefore,
\ben\label{f01}\int_{B(X,R)}f_1(t,Y)dY\leq CR^d\min(R^{-\delta},(1+\mu_1t)^{-\frac{1+\delta}{2}}),\een
and
\ben\label{f02}\int_{B(X,R)}h(Y)dY\leq Ch(X),
\een
which is true for $h=1,f(t),f_1(t)$, and $f_{\pm}(t)$ by translation. Thus,
\ben\label{prop:f+-}
\int_{\R^d}\frac{f_{\pm}(t,Y)dY}{R^{d+1}+|X-Y|^{d+1}}\leq CR^{-1}f_{\pm}(t,X).\een

\begin{lemma}\label{lem:w1}
(1) The weight functions $f(t,X),\ {f_1}(t,X),\ g(t,X)$  satisfies \eqref{ass:W1} with $R=(1+\mu_1t)^\f12$  and a uniform constant $c_0$ independent of $t$.

(2) Property (\ref{ass:W2}) with $\ga=\mu_1$ holds true for $(\widehat{f},h)=(f,g) $ or $(\widehat{f},h)=(f_1,f_-) $ for the minus sign  or $(\widehat{f},h)=(f_1,f_+) $ for the plus sign.
\end{lemma}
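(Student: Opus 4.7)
The plan is to verify the three claimed weight identities directly from the explicit bounds \eqref{f}--\eqref{f02}, using throughout the semigroup identity $H(2\mu_1(t-s))H(1+2\mu_1 s)=H(1+2\mu_1 t)$. Set $R_t=(1+\mu_1 t)^{1/2}$.

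For part (1), I first establish \eqref{ass:W1} for $f$ by a case split on whether $X$ lies in the plateau $\{(x_1^2+x_2^2)^{1/2}\le R_t\}$ or the tail. In the plateau, any $Y$ with $|X-Y|\le 2R_t$ has axial distance at most $3R_t$, forcing $f(t,X)\asymp f(t,Y)\asymp R_t^{-(1+\delta)}$; in the tail, the triangle inequality gives $(y_1^2+y_2^2)^{1/2}\le 3(x_1^2+x_2^2)^{1/2}$, so $\phi_1(Y)\ge 3^{-(1+\delta)}\phi_1(X)$ and $f(t,Y)\ge cf(t,X)$. The same argument with $\phi_0$ and the single coordinate $x_2$ gives \eqref{ass:W1} for $f_1$; by translation invariance, $f_\pm(t,\cdot)$ also satisfies \eqref{ass:W1} with a uniform constant. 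For $g$, given $|X'-X|\le 2R_t$, decompose $\R^d=B(X,6R_t)\cup B(X,6R_t)^c$: on the inner ball, the doubling just proved for $f_\pm$ factorises $f_+(t,Y)f_-(t,Y)\asymp f_+(t,X)f_-(t,X)\asymp f_+(t,X')f_-(t,X')$, while the residual kernel integrals $\int_{|Y-X|\le 6R_t}K(X-Y)dY$ and $\int_{|Y-X|\le 6R_t}K(X'-Y)dY$ with $K(Z)=1/(1+|Z|^{d+1})$ are uniformly bounded above by $\|K\|_{L^1}$ and below by $\int_{B(0,1)}K>0$; on the exterior, $|Y-X|\ge 6R_t\ge 3|X-X'|$ yields $|Y-X|\asymp|Y-X'|$, so the outer kernels agree pointwise up to a uniform constant. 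Combining, $g(t,X)\asymp g(t,X')$.

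For part (2), the third condition of \eqref{ass:W2} is literally an equality for $\widehat f\in\{f,f_1\}$ thanks to the semigroup identity. The second condition $\int_0^T f(t,X\pm 2B_0 t)dt\le C$ reduces, via the substitution $u=x_1\pm 2t$, to a one-dimensional integral of $\min(|u^2+x_2^2|^{-(1+\delta)/2},R_t^{-(1+\delta)})$ bounded by $CR_t^{-\delta}\le C$ through a plateau/tail split on $\{u^2+x_2^2\le R_t^2\}$. The first condition for $(\widehat f,h)=(f_1,f_\pm)$ is handled by commuting $H(2\mu_1(t-s))$ through the spatial translation inside $(f_\pm)_\pm(s,X)=f(s,X\pm 2B_0 s)$, which by the semigroup identity equals $f(t,X\pm 2B_0 s)$; the resulting inequality $\int_0^t f(t,X\pm 2B_0 s)ds\le Cf_1(t,X)$ follows from the same plateau/tail decomposition, now against the sharper target $f_1(t,X)\asymp\min(|x_2|^{-\delta},R_t^{-\delta})$.

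The main obstacle is the first condition of \eqref{ass:W2} in the pairing $(\widehat f,h)=(f,g)$. Here I plan to write $g(s,\cdot)=K*(f_+f_-)(s,\cdot)$, commute $H(2\mu_1(t-s))$ with the spatial convolution $K*$, and bound one factor of $f_+f_-$ pointwise by $CR_s^{-(1+\delta)}$ while applying the semigroup identity to the other translated factor so the heat action collapses back to $f(t,\cdot)$ at a suitably shifted argument; the remaining $K$-convolution is then absorbed by \eqref{prop:f+-}. The difficulty is that the naive pointwise bound $g(s,Y)\le CR_s^{-(1+\delta)}f_\mp(s,Y)$ produces a time factor $\int_0^t R_s^{-(1+\delta)}ds\asymp R_t^{1-\delta}/\mu_1$ that is too large when $\delta<1$ for a constant independent of $\mu_1$. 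Closing the estimate requires exploiting the anisotropic product structure: the regions where $f_+$ and $f_-$ each reach their plateau values lie in tubes centred on $Y_1=-s$ and $Y_1=s$ with axial width $R_s$, and become disjoint once $s\gtrsim\mu_1^{-1}$; combining this separation with the heat smoothing should trade the polynomial growth in $s$ for extra spatial decay in $X$ matching the target $f(t,X)$.
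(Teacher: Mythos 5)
Parts (1) and the first three items of part (2) in your proposal are fine and essentially coincide with the paper's argument: the doubling property of $f,f_1,f_\pm$ follows from the two-sided bounds \eqref{f}--\eqref{f1} by the plateau/tail split, the kernel decomposition transfers it to $g$, the third condition of \eqref{ass:W2} is the exact semigroup identity $H(2\mu_1(t-s))H(1+2\mu_1 s)\phi=H(1+2\mu_1 t)\phi$, and both the second condition and the case $(\widehat f,h)=(f_1,f_\mp)$ reduce, after commuting the heat operator through the translation, to $\int_{\R}f(t,X\pm 2B_0 s)\,ds\le Cf_1(t,X)\le C$.

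However, the case $(\widehat f,h)=(f,g)$ --- which is the only genuinely hard part of the lemma --- is not proved in your proposal; you correctly diagnose that the naive bound $g(s,Y)\le CR_s^{-(1+\delta)}f_{\mp}(s,Y)$ loses a factor $\sim R_t^{1-\delta}/\mu_1$, but the proposed repair ("disjointness of the plateau tubes plus heat smoothing should trade growth for decay") is a hope, not an argument, and your intermediate step of "applying the semigroup identity to the other translated factor" is not available: $H(2\mu_1(t-s))$ acts on the \emph{product} $f(s)\,U(\pm2s)f(s)$ and does not distribute over factors. The paper closes this case differently and without any geometric separation argument. After commuting $H(2\mu_1(t-s))$ with the convolution against $1/(1+|\cdot|^{d+1})$, one never takes a sup of either factor; instead one shows the heat operator is harmless on the product: for $t/2\le s<t$ the kernel width $\sqrt{2\mu_1(t-s)}$ is dominated by the doubling scale $\sqrt{1+\mu_1 s}$ of $f(s)U(\pm2s)f(s)$, so
\begin{equation*}
H(2\mu_1(t-s))\bigl(f(s)U(\pm2s)f(s)\bigr)(Y)\le C\,f(t,Y)\,U(\pm2s)f(0)(Y),
\end{equation*}
while for $0\le s\le t/2$ one uses $H(2\mu_1(t-s))\le CH(2\mu_1 t)$ on nonnegative functions. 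In both regimes the $s$-integration is then performed \emph{against the translation}, via $\int_{\R}U(\pm2s)f(0)(Y)\,ds\le Cf_1(0,Y)\le C$ --- this is exactly what the anisotropic choice $\phi_1=|x_1^2+x_2^2|^{-(1+\delta)/2}$ is designed for --- leaving $Cf(t,Y)$ (resp.\ $CH(2\mu_1 t)(f(0)f_1(0))\le Cf(t,Y)$), and the outer kernel convolution is absorbed by $\int_{\R^d}(1+|X-Y|^{d+1})^{-1}f(t,Y)\,dY\le Cf(t,X)$. Without this step your proof of the lemma is incomplete at precisely the point where it is needed for Proposition \ref{prop:tran-HW1} in Step 1 of the main theorem.
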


\begin{proof}
We deduce from \eqref{f} and \eqref{f1}  that $f(t), {f_1}(t)$ satisfy \eqref{ass:W1} with $R=(1+\mu_1t)^\f12$. So do $f_{\pm}(t)$ and $f_{+}(t)f_{-}(t)$, thus $g(t)$. This also implies
\beno
g(t,X)\geq C^{-1}f_{+}(t,X)f_{-}(t,X).
\eeno

By definition, we have
\beno
H(2\mu_1(t-s)){f}(s,X)={f}(t,X),\quad H(2\mu_1(t-s)){f_1}(s,X)={f_1}(t,X)
\eeno
which give the third inequality of (\ref{ass:W2}).

Thanks to
$$\int_0^T f_{\pm}(t,X\pm B_0t)dt=\int_0^T f(t,X\pm 2B_0t)dt\leq Cf_1(t,X)\leq C,
$$
which gives the second inequality of (\ref{ass:W2}).

Thanks to
\beno\int_0^tH(2\mu_1(t-s))f_{--}(s,X)ds=\int_0^tH(2\mu_1(t-s))U(-2s)f(s,X)ds\\=\int_0^tf(t,X-2B_0s)ds\leq C{f}_1(t,X),
\eeno
which gives the first inequality of (\ref{ass:W2}) with minus sign for $(\widehat{f},h)=(f_1,f_-) $. Similarly, the first inequality of (\ref{ass:W2}) with plus sign for $(\widehat{f},h)=(f_1,f_+) $ is true.

Notice that
\begin{align*}
H(2\mu_1(t-s))g_{\pm}(s,X)=&\int_{\R^d}\frac{H(2\mu_1(t-s))(f_+(s)f_-(s))(Y\pm B_0s)}{1+|X-Y|^{d+1}}dY\\
=&\int_{\R^d}\frac{H(2\mu_1(t-s))(f(s)U(\pm 2s)f(s))(Y)}{1+|X-Y|^{d+1}}dY.
\end{align*}
By \eqref{f}, we have
\beno
f(t,X)\leq C\big(1+{|Y-X|}/{\sqrt{1+\mu_1 t}}\big)^{1+\delta}f(t,Y),
\eeno
which gives
$$f(s)U(\pm 2s)f(s)(X)\leq C\big(1+{|Y-X|}/{\sqrt{1+\mu_1 s}}\big)^{2+2\delta}f(s)U(\pm 2s)f(s)(Y).
$$
Therefore, for $t/2\leq s< t,$
\beno
&&H(2\mu_1(t-s))(f(s)U(\pm 2s)f(s))(Y)\\
&&=\int_{\R^d}K(2\mu_1(t-s),X-Y)f(s)U(\pm 2s)f(s))(X)dX\\
&&\leq C\int_{\R^d}K(2\mu_1(t-s),X-Y)\big(1+{|Y-X|}/{\sqrt{1+\mu_1 s}}\big)^{2+2\delta}f(s)U(\pm 2s)f(s)(Y)dX\\
&&\leq C f(s)U(\pm 2s)f(s))(Y)\leq C f(t)U(\pm 2s)f(0))(Y),
\eeno
and for $0\leq s\leq t/2,$
\begin{align*}
H(2\mu_1(t-s))(f(s)U(\pm 2s)f(s))\leq& C H(2\mu_1 t)(f(s)U(\pm 2s)f(s))\\
\leq& C H(2\mu_1 t)(f(0)U(\pm 2s)f(0)),
\end{align*}
therefore,
\beno
&&\int_0^tH(2\mu_1(t-s))(f(s)U(\pm 2s)f(s))ds\\
&&\leq C\int_0^{\frac{t}{2}}H(2\mu_1 t)(f(0)U(\pm 2s)f(0))ds+C\int_{\frac{t}{2}}^tf(t)U(\pm 2s)f(0))ds\\
&& \leq C H(2\mu_1 t)(f(0)f_1(0))+Cf(t)f_1(0)\leq Cf(t).
\eeno
This shows that
\beno
\int_0^tH(2\mu_1(t-s))g_{\pm}(s,X)ds\leq C\int_{\R^d}\frac{f(t,Y)}{1+|X-Y|^{d+1}}dY\leq Cf(t,X),\eeno
which gives the first inequality of (\ref{ass:W2}) for $(\widehat{f},h)=(f,g) $.
\end{proof}

\subsection{Proof of Theorem \ref{thm:viscous}}

The following lemma gives the relation between the H\"{o}lder norms of
$z_\pm^{(i)}(t), i=1,2$ and $M_\pm(t)$.

\begin{lemma}\label{lem:z1-z2}
It holds that
\beno
&&|z_{\pm}^{(2)}(t)|_{0,\a;f_{1}(t),(\mu_1 t)^{1/2}} \leq C\mu_1\min((\mu_1 t)^{-\frac{1}{2}},(\mu_1 t)^{-\frac{1-\a}{2}})M_{\pm}(t),\\
&&|z_{\pm}^{(2)}(t)|_{0,\a; 1, (1+\mu_1 t)^{1/2}}\leq C\mu_1\min((\mu_1 t)^{-\frac{1}{2}},(\mu_1 t)^{-\frac{1+\delta}{2}})M_{\pm}(t),\\
&&|\nabla z_{\pm}^{(1)}|_{1,\a;f_{\pm}(t),(\mu_1 t)^{1/2}}\leq CM_{\pm}(t).\eeno
\end{lemma}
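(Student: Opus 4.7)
The three bounds all follow by repackaging the ingredients of $M_\pm(t)$, and they rest on two structural identities. First, since $\psi_\pm^{(2)}=\Delta^{-1}\curl z_\pm^{(2)}$ and $\dive z_\pm^{(2)}=0$, a direct computation gives $z_\pm^{(2)}$ as a first divergence-type derivative of the antisymmetric tensor $\psi_\pm^{(2)}$, namely $(z_\pm^{(2)})^k=\partial_j(\psi_\pm^{(2)})^{jk}$; in particular $|z_\pm^{(2)}|_{0,\a;h,R}\leq C|\nabla\psi_\pm^{(2)}|_{0,\a;h,R}$ for any weight $h$ and scale $R$. Second, since $\dive z_\pm^{(1)}=0$ and $J_\pm^{(1)}=\curl z_\pm^{(1)}$, each component of $\nabla z_\pm^{(1)}$ is a zero-order Calder\'on--Zygmund image of components of $J_\pm^{(1)}$, so $\nabla^2 z_\pm^{(1)}$ is a Calder\'on--Zygmund image of $\nabla J_\pm^{(1)}$.

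For the first estimate, I would unwrap the definition of the scaled norm $|\psi_\pm^{(2)}|_{1,\a;f_1,R}$ and isolate the summand $\max(R,R^{1-\a})|\nabla\psi_\pm^{(2)}|_{0,\a;f_1,R}$. Taking $R=(\mu_1 t)^{1/2}$ and using $|\psi_\pm^{(2)}|_{1,\a;f_1,(\mu_1 t)^{1/2}}\leq \mu_1 M_\pm(t)$ (built into the definition of $M_\pm$) then gives
\[
|\nabla\psi_\pm^{(2)}|_{0,\a;f_1,(\mu_1 t)^{1/2}}\leq \min\bigl((\mu_1 t)^{-1/2},(\mu_1 t)^{-(1-\a)/2}\bigr)\mu_1 M_\pm(t),
\]
which combined with the first identity yields the first inequality.

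For the second inequality, I would pass from the $f_1$-weighted bound to an unweighted bound via the elementary inequalities $|u|_0\leq |f_1(t)|_0|u|_{0;f_1}$ and $[u]_\a\leq 2|f_1(t)|_0[u]_{\a;f_1}$, together with the pointwise estimate $|f_1(t)|_0\leq C(1+\mu_1 t)^{-\delta/2}$ from \eqref{f1}. A case split on $\mu_1 t\leq 1$ versus $\mu_1 t\geq 1$ produces respectively the exponent $1/2$ (using $(1-\a)/2\leq 1/2$ and the boundedness of the $(1+\mu_1 t)^{\pm}$ factors there) and the exponent $(1+\delta)/2$ (where $(1+\mu_1 t)^{-\delta/2}\sim(\mu_1 t)^{-\delta/2}$ compounds with the $(\mu_1 t)^{-1/2}$ contribution of the $\min$ from the previous step).

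For the third estimate, I would split $|\nabla z_\pm^{(1)}|_{1,\a;f_\pm,(\mu_1 t)^{1/2}}$ into its zeroth, H\"older, and second-derivative pieces. The first two are dominated by $|z_\pm^{(1)}|_{1,\a;f_\pm,(1+\mu_1 t)^{1/2}}\leq M_\pm(t)$ after comparing the scales $(1+\mu_1 t)^{1/2}\geq (\mu_1 t)^{1/2}$. The remaining piece $\max((\mu_1 t)^{1/2},(\mu_1 t)^{(1-\a)/2})|\nabla^2 z_\pm^{(1)}|_{0,\a;f_\pm,(\mu_1 t)^{1/2}}$ is controlled, by the second identity, by the analogous norm of $\nabla J_\pm^{(1)}$, provided the Riesz transform is bounded on the weighted H\"older space $C^{0,\a}_{f_\pm}$. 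With that input, the bound $|J_\pm^{(1)}|_{1,\a;f_\pm,(\mu_1 t)^{1/2}}\leq M_\pm(t)$ closes the estimate. The essential obstacle is precisely this weighted H\"older boundedness of the Riesz transform for the translating, anisotropic weights $f_\pm$; it is one of the new technical tools announced at the start of the paper, and invoking it is the only non-routine step in the whole proof.
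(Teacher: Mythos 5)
Your treatment of the first two inequalities is correct and coincides with the paper's: $z_\pm^{(2)}=\dive\psi_\pm^{(2)}$, unwrap the scaled norm to extract the factor $\min((\mu_1 t)^{-1/2},(\mu_1 t)^{-(1-\a)/2})$, then trade the weight $f_1(t)$ for the constant weight using $|f_1(t)|_0\le C(1+\mu_1 t)^{-\delta/2}$ and the scale change $(\mu_1 t)^{1/2}\to(1+\mu_1 t)^{1/2}$.

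For the third inequality there is a genuine gap. You reduce $|\nabla^2 z_\pm^{(1)}|_{0,\a;f_\pm(t),(\mu_1 t)^{1/2}}$ to a bound on $\nabla J_\pm^{(1)}$ via the zero-order operator $\nabla^2\triangle^{-1}\dive$, and you correctly flag that this requires the Riesz transform to be bounded on $C^{0,\a}_{f_\pm}$ \emph{with the same weight on both sides}. But that is not one of the tools the paper provides, and it is exactly the statement the paper avoids: Lemma \ref{Riesz1} only gives $|R_iR_ju|_{0,\a;h,(1+\mu_1t)^{1/2}}\le C(1+\mu_1t)^{-\delta/2}|u|_{0,\a;hf_1(t),(1+\mu_1t)^{1/2}}$, i.e.\ the source must carry an extra decaying factor $f_1$, precisely because the sup-norm part $|R_iR_ju|_{0;h}\lesssim|u|_{0;h}$ is problematic for a nonlocal kernel of critical decay $|X-Y|^{-d}$ against a decaying weight (the paper's pointwise weight estimate \eqref{prop:f+-} only holds for kernels decaying like $|X-Y|^{-d-1}$). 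Proposition \ref{prop:Riesz} is likewise a commutator/product estimate, not same-weight boundedness. The paper's route is different and purely local: from $\dive z_\pm^{(1)}=0$ one has $\triangle z_\pm^{(1)}=\dive J_\pm^{(1)}$, so $|\triangle z_\pm^{(1)}|_{0,\a;f_\pm(t),(\mu_1t)^{1/2}}\le C\min((\mu_1t)^{-1/2},(\mu_1t)^{-(1-\a)/2})|J_\pm^{(1)}|_{1,\a;f_\pm(t),(\mu_1t)^{1/2}}$, and then the weighted interior Schauder estimate (Lemma \ref{Schauder}), which needs only the local doubling property $f_\pm(t,X)\le Cf_\pm(t,Y)$ for $|X-Y|\le(1+\mu_1t)^{1/2}$ guaranteed by Lemma \ref{lem:w1}, converts control of $\nabla z_\pm^{(1)}$ and $\triangle z_\pm^{(1)}$ into control of $\nabla^2 z_\pm^{(1)}$ with no singular integral at all. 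You should replace the Riesz-transform step by this Schauder argument; as written, the step you identify as ``the only non-routine step'' is an unproven (and, in the same-weight form, likely false) ingredient rather than a citable one.
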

\begin{proof}
As $z_{\pm}^{(2)}=\text{div}\psi_{\pm}^{(2)}$,  we have
\begin{align*}
|z_{\pm}^{(2)}(t)|_{0,\a;f_{1}(t),(\mu_1 t)^{1/2}}&\leq C|\nabla\psi_{\pm}^{(2)}(t)|_{0,\a;f_{1}(t),(\mu_1 t)^{1/2}}\\
&\leq C|\psi_{\pm}^{(2)}(t)|_{1,\a;f_{1}(t),(\mu_1 t)^{1/2}}\min((\mu_1 t)^{-\frac{1}{2}},(\mu_1 t)^{-\frac{1-\a}{2}})\\
&\leq C\mu_1\min((\mu_1 t)^{-\frac{1}{2}},(\mu_1 t)^{-\frac{1-\a}{2}})M_{\pm}(t),
\end{align*}
which along with \eqref{f1} gives
\begin{align*}
 |z_{\pm}^{(2)}(t)|_{0,\a;1, (1+\mu_1 t)^{1/2}}&\leq |z_{\pm}^{(2)}(t)|_{0,\a;f_{1}(t),(\mu_1 t)^{1/2}}\left(1+\frac{1}{\mu_1 t}\right)^{\frac{\a}{2}}|f_{1}(t)|_0\\
 &\leq C\mu_1\min((\mu_1 t)^{-\frac{1}{2}},(\mu_1 t)^{-\frac{1-\a}{2}})M_{\pm}(t)\left(1+\frac{1}{\mu_1 t}\right)^{\frac{\a}{2}}(1+\mu_1 t)^{-\frac{\delta}{2}}\\
 &\leq C\mu_1\min((\mu_1 t)^{-\frac{1}{2}},(\mu_1 t)^{-\frac{1+\delta}{2}})M_{\pm}(t).
 \end{align*}

Obviously, we have
 \beno
 |\nabla z_{\pm}^{(1)}|_{0,\a;f_{\pm}(t)}\leq| z_{\pm}^{(1)}|_{1,\a;f_{\pm}(t),(1+\mu_1 t)^{1/2}}\leq M_{\pm}(s).
 \eeno
Thanks to $\triangle z_{\pm}^{(1)}=\text{div} J_{\pm}^{(1)}$, we have
\begin{align*}
|\triangle z_{\pm}^{(1)}|_{0,\a;f_{\pm}(t),(\mu_1 t)^{1/2}}&\leq C|J_{\pm}^{(1)}|_{1,\a;f_{\pm}(t),(\mu_1 t)^{1/2}}\min((\mu_1 t)^{-\frac{1}{2}},(\mu_1 t)^{-\frac{1-\a}{2}})\\
&\leq CM_{\pm}{(s)}\min((\mu_1 t)^{-\frac{1}{2}},(\mu_1 t)^{-\frac{1-\a}{2}}).\end{align*}
Notice that by Lemma \ref{lem:w1},
\beno
f_\pm(t,X)\le Cf_\pm(t,Y)\quad \text{if}\quad |X-Y|\le (1+\mu_1t)^\f12.
\eeno
Then we infer from Lemma \ref{Schauder}  that
\begin{align*}
 |\nabla^2 z_{\pm}^{(1)}|_{0,\a;f_{\pm}(t),(\mu_1 t)^{1/2}}\leq& C\Big(|\nabla z_{\pm}^{(1)}|_{0,\a;f_{\pm}(t)}\min((\mu_1 t)^{-\frac{1}{2}},(\mu_1 t)^{-\frac{1-\a}{2}})+|\triangle z_{\pm}^{(1)}|_{0,\a;f_{\pm}(t),(\mu_1 t)^{1/2}}\Big)\\
 \leq& CM_{\pm}(s)\min((\mu_1 t)^{-\frac{1}{2}},(\mu_1 t)^{-\frac{1-\a}{2}}).
 \end{align*}
This proves the third inequality.
\end{proof}

Now let's begin with the proof of Theorem \ref{thm:viscous}.

\begin{proof}
For fixed $\nu>0$ and $\mu>0$, the local well-posedness of the MHD equations in the weighted H\"{o}lder space can be proved by using the semigroup method and the estimates of heat operator in the weighted H\"{o}lder space(see section 6.3). Here we omit the details.
The local well-posedness of the linear equations (4.2)-(4.7) in the weighted H\"{o}lder space is also true.

The proof of global well-posedness is based on the continuity argument. Let us first assume
\ben\label{ass:Mpm}
M_{\pm}(s)<\varepsilon_1.
\een
for $\epsilon_1>0$ given by Proposition \ref{prop:tran-HW1}.  This in particular gives
\beno
|Z^{(1)}_\pm(t)\pm B_0|_{1,\a;f_\mp(t),(1+\mu_1 t)^{1/2}}<\varepsilon_1.
\eeno
Our next goal is to show that
\begin{align}
M_{+}(s)\leq C\big(M_+(0)+(M_{+}(s)+\mu_2/\mu_1)M_{-}(s)\big),\label{eq:M+}\\
M_{-}(s)\leq C\big(M_-(0)+(M_{-}(s)+\mu_2/\mu_1)M_{+}(s)\big).\label{eq:M-}
\end{align}
With the above estimates, we can deduce our result if $\epsilon_2$ is taken small enough so that
\beno
CM_\pm(0)\le C\epsilon_2<\varepsilon_1/2,\quad C^2\epsilon_2<1/2.
\eeno
This condition on $\epsilon_2$ implies that if $M_{\pm}(s)< \varepsilon_1$
 then $M_{\pm}(s)\le 2CM_\pm(0)< \varepsilon_1.$
 \smallskip

The proof of (\ref{eq:M+}) and (\ref{eq:M-}) is split into three steps.\smallskip

\no{\bf Step 1.} $C^{1,\al}$ estimate for $z_{\pm}^{(1)}$

For the system (\ref{eq:MHD-z1-z}), we apply Proposition \ref{prop:tran-HW1} to obtain
\beno
&&\sup_{0\leq t\leq s}|z_{+}^{(1)}(t)|_{1,\a;{f}_+(t),(1+\mu_1t)^{1/2}}\\
&&\leq C\Big(|z_{+}(0)|_{1,\a;\widehat{f}(0)}+\Lambda_1\big(s,I(z_-^{(1)},z_+^{(1)}),z_-^{(2)}\cdot\nabla z_+^{(1)}+I(z_-^{(2)},z_+^{(1)}),0,{f}_+,g\big)\Big).
\eeno
By (\ref{eq:I(u,w)-est1}), we have
\begin{align*}
|I(z_-^{(1)}(t),z_+^{(1)}(t))|_{1,\a;g(t),(1+\mu_1 t)^{1/2}}\leq& C|z_-^{(1)}(t)|_{1,\a;f_-(t),(1+\mu_1 t)^{1/2}}|z_+^{(1)}(t)|_{1,\a;f_+(t),(1+\mu_1 t)^{1/2}}\\
\leq& CM_{+}(s)M_{-}(s),
\end{align*}
and  by  (\ref{eq:I(u,w)-est2}) and Lemma \ref{lem:z1-z2},
\begin{align*}
|z_-^{(2)}\cdot\nabla z_+^{(1)}(t)+I(z_-^{(2)}(t),z_+^{(1)}(t))|_{0,\a;f_+(t)}\leq& C|z_-^{(2)}(t)|_{0,\a;1,(1+\mu_1t)^{1/2}}|z_+^{(1)}(t)|_{1,\a;f_+(t),(1+\mu_1 t)^{1/2}}(1+\mu_1 t)^{-\frac{1}{2}}\\
\leq& C\mu_1 M_-(s)\min((\mu_1 t)^{-\frac{1}{2}},(\mu_1 t)^{-\frac{1+\delta}{2}})M_{+}(s)(1+\mu_1 t)^{-\frac{1}{2}}\\
\leq& C\mu_1 M_{+}(s)M_-(s)\min((\mu_1 t)^{-\frac{1}{2}},(\mu_1 t)^{-1-\frac{\delta}{2}}),
\end{align*}
and  obviously,
\beno
|z_{+}(0)|_{1,\a;{f}(0)}\leq M_+(0).
\eeno
Therefore, we obtain
\beno
\sup_{0\leq t\leq s}|z_{+}^{(1)}(t)|_{1,\a;{f}_+(t),(1+\mu_1 t)^{1/2}}\leq C\big(M_+(0)+M_{+}(s)M_-(s)\big).
\eeno
Similarly, we have
\beno
\sup_{0\leq t\leq s}|z_{-}^{(1)}(t)|_{1,\a;{f}_-(t),(1+\mu_1 t)^{1/2}}\leq C\big(M_-(0)+M_{+}(s)M_-(s)\big).
\eeno

\no{\bf Step 2.} $C^{1,\al}$ estimate for $J_{\pm}^{(1)}$

For the system (\ref{eq:MHD-z1-j}), we apply Proposition \ref{prop:tran-HW1}  to obtain
\beno
\sup_{0\leq t\leq s}|J_{+}^{(1)}(t)|_{1,\a;{f}_+(t),(\mu_1 t)^{1/2}}\leq C\Big(|J_{+}(0)|_{0,\a;\widehat{f}(0)}+\Lambda_0\big(s,\nabla z_-^{(1)}\wedge\nabla z_+^{(1)},0,z_-^{(2)}\cdot\nabla z_+^{(1)},{f}_+,g\big)\Big).
\eeno
Thanks to the choice of weight functions, we have
\beno
f_-(t,X)f_+(t,X)\le Cg(t,X).
\eeno
Then by Lemma \ref{lem:z1-z2} and analogous of Lemma \ref{lem:product}, we have
 \begin{align*}
|\nabla z_-^{(1)}\wedge\nabla z_+^{(1)}(t)|_{1,\a;g(t),(\mu_1 t)^{1/2}}\leq& C|\nabla z_-^{(1)}|_{1,\a;f_-(t),(\mu_1 t)^{1/2}}|\nabla z_+^{(1)}(t)|_{1,\a;f_+(t),(\mu_1 t)^{1/2}}\\
\leq& CM_{+}(s)M_{-}(s),\\
|z_-^{(2)}\cdot\nabla z_+^{(1)}(t)|_{0,\a;f_+(t),(\mu_1 t)^{1/2}}\leq& C|z_-^{(2)}(t)|_{0,\a;1,(\mu_1 t)^{1/2}}|\nabla z_+^{(1)}(t)|_{0,\a;f_+(t),(\mu_1 t)^{1/2}}\\
\leq& C\mu_1 \min((\mu_1 t)^{-\frac{1-\a}{2}},(\mu_1 t)^{-\frac{1}{2}})M_-(s)M_{+}(s),
\end{align*}
and $|j_{+}(0)|_{0,\a;{f}(0)}\leq M_+(0).$
Therefore, we obtain
\beno
\sup_{0\leq t\leq s}|J_{+}^{(1)}(t)|_{1,\a;{f}_+(t),(\mu_1 t)^{1/2}}\leq C\big(M_+(0)+M_{+}(s)M_-(s)\big).
\eeno
Similarly, we have
\beno
\sup_{0\leq t\leq s}|J_{-}^{(1)}(t)|_{1,\a;{f}_-(t),(\mu_1 t)^{1/2}}\leq C\big(M_-(0)+M_{+}(s)M_-(s)\big).
\eeno

\no{\bf Step 3.} $C^{1,\al}$ estimate for $\psi_{\pm}^{(2)}$

For the system (\ref{eq:MHD-psi}), we apply Proposition \ref{prop:tran-HW1} to obtain
\beno
\sup_{0\leq t\leq s}|\psi_{+}^{(2)}(t)|_{1,\a;{f}_1(t),(\mu_1t)^{1/2}}\leq C\Lambda_0\big(s,\textrm{II}_2(z_-^{(1)},z_+^{(2)})-\mu_2j_-^{(1)},\textrm{II}_1(z_-^{(2)}, z_+^{(2)}),\mu_2z_-^{(2)},{f}_1,f_-\big),
\eeno
here we used the fact that $\psi_{\pm}^{(2)}(0)=0, f_{1\pm}=f_1$, and the decomposition of $ J_{\pm}$ in (\ref{eq:j-dec}). We get by Proposition\ref{prop:Riesz} and Lemma \ref{lem:z1-z2}  that
\begin{align*}
&|\textrm{II}_2(z_-^{(1)}(t),z_+^{(2)}(t))-\mu_2j_-^{(1)}(t)|_{1,\a;f_-(t),(\mu_1 t)^{1/2}}\\
&\leq C|z_-^{(1)}(t)|_{1,\a;f_-(t),(\mu_1 t)^{1/2}}|\nabla \psi_{+}^{(2)}(t)|_{1,\a;f_1(t),(\mu_1 t)^{1/2}}+\mu_2|j_-^{(1)}(t)|_{1,\a;f(t),(\mu_1 t)^{1/2}}\\&\leq C\mu_1 M_{+}(s)M_{-}(s)+\mu_2M_{-}(s),
\end{align*}
and
\begin{align*}
|\textrm{II}_1(z_-^{(2)}(t), z_+^{(2)}(t))|_{0,\a;f_1(t)}\leq& C|z_-^{(2)}(t)|_{0,\a;f_1(t),(\mu_1 t)^{1/2}}|z_-^{(2)}(t)|_{0,\a;f_1(t),(\mu_1 t)^{1/2}}(1+\mu_1t)^{-\frac{\delta}{2}}(1+(\mu_1 t)^{-\frac{\a}{2}})\\
\leq& C\mu_1^2\min((\mu_1 t)^{-1+\frac{\a}{2}},(\mu_1 t)^{-1-\frac{\delta}{2}})M_-(s)M_{+}(s),
\end{align*}
and
\beno
|\mu_2z_-^{(2)}(t)|_{0,\a;{f}_1(t),(\ga t)^{1/2}}\leq C\mu_1\mu_2 \min((\mu_1 t)^{-\frac{1}{2}},(\mu_1 t)^{-\frac{1-\a}{2}})M_-(s).
\eeno
This shows that
\beno
\sup_{0\leq t\leq s}|\psi_{+}^{(2)}(t)|_{1,\a;{f}_1(t),(\mu_1 t)^{1/2}}\leq C\big(\mu_1 M_{+}(s)+\mu_2\big)M_-(s).
\eeno
Similarly, we have
\beno
\sup_{0\leq t\leq s}|\psi_{-}^{(2)}(t)|_{1,\a;{f}_1(t),(\mu_1 t)^{1/2}}\leq C\big(\mu_1 M_{-}(s)+\mu_2\big)M_+(s).
\eeno

Summing up the estimates in Step 1-Step 3, we conclude (\ref{eq:M+})
and (\ref{eq:M-}).
\end{proof}

\section{Appendix}

\subsection{Weighted $C^{1,\al}$ estimate for the integral operator}

Recall that
\begin{align*}
T_1u\tre &\int_{\R^d}\nabla N(X-Y)\theta(|X-Y|)u(Y)dY\nonumber\\
T_{ij}w\tre &\int_{\R^d}\partial_i\partial_j\Big(\nabla N(X-Y)(1-\theta(|X-Y|))\Big)w(Y)dY,
\end{align*}where the cut-off function $\theta$ is given by (\ref{def:theta}).

\begin{lemma}\label{lem:integral}
Let $u, w\in C^{0,\al}_h(\R^d)$ with the weight $h$ satisfying \eqref{ass:W}. Then there exists a constant $C>0$ depending only on $c_0$ so that
\beno
&&|T_1u|_{1,\al;h}\le C|u|_{0,\al;h},\\
&&|T_{ij}w|_{1,\al;g}\le C|w|_{0;h},
\eeno
where $g(X)=\int_{\R^d}\frac {h(Y)} {1+|X-Y|^{d+1}}dy$. In particular, we have
\beno
|T_1u+T_{ij}w|_{1,\al;g}\le C\big(|u|_{0,\al;h}+|w|_{0,h}\big).
\eeno
\end{lemma}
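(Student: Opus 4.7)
The plan is to treat the two operators separately, since $T_1$ is a singular integral with compactly supported kernel (localizing the Newton potential) while $T_{ij}$ is a smooth integral operator with polynomial decay at infinity. The combined estimate will follow since $h \le C g$ on any fixed ball, so $|\cdot|_{1,\al;h}$ controls $|\cdot|_{1,\al;g}$ with a bounded loss.

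For $T_1 u$, set $K(Z) = \theta(|Z|) \na N(Z)$, which is supported in $|Z|\le 2$ with $|K(Z)|\lesssim |Z|^{1-d}$. The zeroth-order bound $|T_1 u|_{0;h}\le C|u|_{0;h}$ follows directly from the weight condition \eqref{ass:W}: for $|X-Y|\le 2$ we have $h(Y)\le c_0^{-1} h(X)$, so $|T_1u(X)|\le |u|_{0;h}\int_{|Z|\le 2}|K(Z)|h(X-Z)\,dZ \le C h(X)|u|_{0;h}$. For $\na T_1 u$, differentiate under the integral to get $\partial_k T_1 u(X)=\int \partial_k K(X-Y)\,u(Y)\,dY$. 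Since $\int \partial_k K(Z)\,dZ = 0$ by compact support, I can insert $-u(X)$ freely:
\begin{equation*}
\partial_k T_1 u(X) = \int \partial_k K(X-Y)\bigl[u(Y)-u(X)\bigr]dY.
\end{equation*}
Using $|\partial_k K(Z)|\lesssim |Z|^{-d}$ on $|Z|\le 2$ together with $|u(Y)-u(X)|\le (h(X)+h(Y))|X-Y|^\al[u]_{\al;h}\le C h(X)|X-Y|^\al[u]_{\al;h}$ (by the weight condition), the integrand is dominated by $h(X)|X-Y|^{\al-d}[u]_{\al;h}$, which is integrable on $\{|X-Y|\le 2\}$. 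This yields $|\na T_1 u|_{0;h}\le C|u|_{0,\al;h}$.

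The $C^{0,\al}_h$ estimate for $\na T_1 u$ is the technical heart of the lemma and is where I expect the main difficulty. I would carry out the standard Schauder-type near/far split. For $|X-X'|\ge 1$ the previous pointwise bound combined with \eqref{ass:W} gives the Hölder bound trivially. For $|X-X'|<1$, set $r=|X-X'|$, split the domain into $|X-Y|\le 2r$ and $|X-Y|>2r$, and estimate
\begin{equation*}
\partial_k T_1u(X)-\partial_k T_1u(X') = \int \bigl[\partial_k K(X-Y)-\partial_k K(X'-Y)\bigr]\bigl[u(Y)-u(X)\bigr]dY + \text{correction},
\end{equation*}
where in the near region one handles each term separately using $|\partial_k K(Z)|\lesssim |Z|^{-d}$ (integrating $|Z|^{\al-d}$ gives $r^\al$), and in the far region one uses $|\partial_k K(X-Y)-\partial_k K(X'-Y)|\lesssim r|X-Y|^{-d-1}$ so that the singularity is integrable at the rate $r \int_{|Z|\ge 2r}|Z|^{\al-d-1}dZ \sim r^\al$. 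The crucial point is that throughout the region $|X-Y|\le 2$ the weight condition gives $h(X)\sim h(Y)\sim h(X')$, so a uniform factor $h(X)+h(X')$ pulls out and one obtains $[\na T_1 u]_{\al;h}\le C[u]_{\al;h}$.

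For $T_{ij}w$ the kernel $K_{ij}(Z)=\partial_i\partial_j\bigl(\na N(Z)(1-\theta(|Z|))\bigr)$ is smooth on $\R^d$ and obeys $|\na^\ell K_{ij}(Z)|\lesssim (1+|Z|)^{-d-1-\ell}$ for $\ell=0,1,2$. Therefore the pointwise bound
\begin{equation*}
|T_{ij}w(X)|+|\na T_{ij}w(X)|+|\na^2 T_{ij}w(X)|\le C|w|_{0;h}\int_{\R^d}\frac{h(Y)}{1+|X-Y|^{d+1}}\,dY = C g(X)|w|_{0;h}
\end{equation*}
is immediate. The Hölder semi-norm of $\na T_{ij}w$ follows by interpolation: for $|X-X'|\le 1$ apply the mean-value theorem with the $\na^2$-bound to get $|X-X'|^{1-\al}$, and for $|X-X'|\ge 1$ use the triangle inequality with the $\na$-bound; in both cases $g(X)+g(X')$ pulls out since $g$ itself satisfies \eqref{ass:W} (because $1+|X-Z|^{d+1}$ and $1+|X'-Z|^{d+1}$ are comparable when $|X-X'|\le 2$). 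Combining the two estimates and using $h\le C g$ on unit balls (a consequence of the definition of $g$ together with \eqref{ass:W}) gives the final combined bound.
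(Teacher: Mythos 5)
Your treatment of $T_{ij}$ coincides with the paper's: both rest on the pointwise bounds $|\na^k K_{ij}(Z)|\lesssim (1+|Z|)^{-d-1}$ for $k=0,1,2$ and the definition of $g$. For $T_1$, however, you take a genuinely different route. The paper decomposes the kernel dyadically, $\na N\cdot\theta=\sum_{k\ge0}\varphi_k$ with $\varphi_k$ smooth and supported on the annulus $|Z|\sim 2^{-k}$, proves $|\na B_k(u)|\lesssim 2^{-k\al}h$ and $|\na^2 B_k(u)|\lesssim 2^{k(1-\al)}h$ on each piece, and sums $\sum_k 2^{-k\al}\min(1,2^k|X-Y|)\lesssim |X-Y|^{\al}$; you instead run the classical near/far Schauder split directly on the unlocalized kernel. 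Both implement the same cancellation (subtracting $u(X)$, legitimate because $\int\na\varphi_k=0$, resp.\ because the distributional derivative of the compactly supported $L^1$ kernel integrates to zero), and both use \eqref{ass:W} to pull out a single factor $h(X)+h(Y)$ over the support $|X-Y|\le 2$. The dyadic route buys you smooth pieces supported away from the singularity, so every integral is absolutely convergent and no principal values ever appear.

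One step of your sketch would fail as literally stated. In the near region $|X-Y|\le 2r$ you propose to "handle each term separately using $|\partial_k K(Z)|\lesssim |Z|^{-d}$." For the term centered at $X$ this works ($\int_{|Z|\le 2r}|Z|^{\al-d}dZ\sim r^{\al}$), but for the term $\int_{|X-Y|\le 2r}\partial_k K(X'-Y)\,[u(Y)-u(X)]\,dY$ the bound $|u(Y)-u(X)|\lesssim h(X)r^{\al}[u]_{\al;h}$ against $|X'-Y|^{-d}$ gives a logarithmically divergent integral, since $X'\in B(X,2r)$. The standard repair is to recenter the increment at $X'$, writing $u(Y)-u(X)=[u(Y)-u(X')]+[u(X')-u(X)]$: the first piece is absolutely integrable ($|X'-Y|^{\al-d}$), while the constant piece must be paired with $\big|\int_{B(X,2r)}\partial_k K(X'-Y)\,dY\big|\le C$, obtained from the divergence theorem (the boundary integrand $K\sim|Z|^{1-d}$ is integrable on $\partial B(X,2r)$ and on small spheres about $X'$). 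The same bookkeeping is what makes your identity $\partial_k T_1u(X)=\int\partial_k K(X-Y)[u(Y)-u(X)]\,dY$ correct: the principal-value constant $\tfrac{\delta_{jk}}{d}$ and the delta contribution from differentiating $\na N$ cancel exactly. These are recoverable, textbook details, but they are precisely the ones the paper's dyadic decomposition is designed to avoid.
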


\begin{proof}
Thanks to
\beno
\left|\nabla^k\partial_i\partial_j\Big(\nabla N(X-Y)\cdot(1-\theta(|x-y|))\Big)\right|\leq \dfrac{C}{1+|x-y|^{d+1}},\ k=0,1,2,
\eeno
and $h(X)\le Cg(X)$, we get
\beno
|\nabla^kT_{ij}w(X)|\leq Cg(X)|w/h|_0,
\eeno
which in particular implies
\ben\label{eq:Bij-H}
|T_{ij}w|_{1,\al;g}\le C|w/h|_0.
\een

To deal with $T_1u$, we decompose it as follows
\beno
T_1u=\sum_{k=0}^{+\infty}B_k(u),
\eeno
where
\beno
B_k(u)=\int_{\R^d}\varphi_k(X-Y)u(Y)dY,\quad
\varphi_k(X)=\nabla N(X)\cdot\big(\theta(2^k|X|)-\theta(2^{k+1}|X|)\big).
\eeno
To proceed, we need to use the following simple facts:
\beno
&&\int_{\R^d}|\varphi_k(X)|dX\leq C2^{-k},\\
&&\int_{\R^d}|\nabla\varphi_k(X)||X|^{\a}dX\leq C2^{-k\a},\\
&&\int_{\R^d}|\nabla^2\varphi_k(X)||X|^{\a}dX\leq C2^{k(1-\a)},\\
&&\varphi_k(X)=0\quad\text{for}\quad |X|>2,\, k\ge 0.
\eeno
Then we have
\ben\label{eq:Bk1}
|B_k(u)(X)|\leq \int_{\R^d}|\varphi_k(X-Y)||h(Y)|dY|u/h|_0\leq C2^{-k}h(X)|u/h|_0.
\een
Notice that
\beno
\nabla B_k(u)(X)=\int_{\R^d}\nabla\varphi_k(X-Y)(u(Y)-u(X))dY,
\eeno
from which, we deduce
\begin{align}
|\nabla B_k(u)(X)|\leq& \int_{\R^d}|\nabla\varphi_k(X-Y)||X-Y|^{\a}(h(X)+h(Y))dY|u|_{0,\a;h}\nonumber\\
\leq& C2^{-k\a}h(X)|u|_{0,\a;h}.\label{eq:Bk2}
\end{align}
Similarly, we have
\begin{align}
|\nabla^2B_k(u)(X)|\leq C2^{k(1-\a)}h(X)|u|_{0,\a;h}.\label{eq:Bk3}
\end{align}
It follows from \eqref{eq:Bk1} and \eqref{eq:Bk2} that
\beno
&&\sum_{k=0}^{+\infty}|B_k(u)(X)|\leq \sum_{k=0}^{+\infty}C2^{-k}h(X)|u/h|_0\leq Ch(X)|u/h|_0,\\
&&\sum_{k=0}^{+\infty}|\nabla B_k(u)(X)|\leq  \sum_{k=0}^{+\infty}C2^{-k\a}h(X)|u|_{0,\al}\leq Ch(X)|u|_{0,\a;h}.
\eeno
It follows from \eqref{eq:Bk2} and \eqref{eq:Bk3} that
\beno
|\nabla B_k(u)(X)-\nabla B_k(u)(Y)|\leq C2^{-k\a}(h(X)+h(Y))|u|_{0,\a;h}\min\big\{1,2^k|X-Y|\big\},
\eeno
which gives
\begin{align*}
\Big|\sum\limits_{k=0}^{+\infty}\na\big(B_k(u)(X)-B_k(u)(Y)\big)\Big|&\leq C(h(X)+h(Y))|u|_{0,\a;h}\sum\limits_{k=0}^{+\infty}2^{-k\a}\min\big\{1,2^k|X-Y|\big\}\\
&\leq C\big(h(X)+h(Y)\big)|u|_{0,\a;h}|X-Y|^{\a}.
\end{align*}
Now we can conclude that
\beno
|T_1u|_{1,\al;h}\le \Big|\sum_{k=0}^{+\infty}B_k(u)\Big|_{1,\al;h}\le C|u|_{0,\al;h}.
\eeno
This finishes the proof of  the lemma.
\end{proof}

\begin{lemma}\label{lem:B-div}
It holds that
\begin{align*}
\text{div}\,(T_1u+T_{ij}w^{ij})+u=&\int_{\R^d}\nabla N(X-Y)\cdot\nabla\theta(|X-Y|)
u(Y)dY\\&-\int_{\R^d}\partial_i\partial_j\Big(\nabla N(X-Y)\cdot\nabla\theta(|X-Y|)\Big)w^{ij}(Y)dY.
\end{align*}
\end{lemma}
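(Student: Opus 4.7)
The plan is to compute the two divergences separately by moving the $X$-derivatives inside the integrals, using the fundamental identity $-\Delta N=\delta$, and exploiting the fact that the cutoff $\theta(|X-Y|)$ equals $1$ at the diagonal while $1-\theta(|X-Y|)$ vanishes there. Since $\nabla N$ is only an $L^1_{\mathrm{loc}}$ kernel, the differentiations have to be interpreted distributionally, but this is routine once the calculation is set up correctly.

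First I would handle $T_1 u$. Formally,
\[
\mathrm{div}_X\bigl[\nabla N(X-Y)\theta(|X-Y|)\bigr]=\Delta N(X-Y)\,\theta(|X-Y|)+\nabla N(X-Y)\cdot\nabla_X\theta(|X-Y|),
\]
and $\Delta N=-\delta$ in the distributional sense. Integrating against $u(Y)$, the $\Delta N\cdot\theta$ contribution collapses to $-\theta(0)u(X)=-u(X)$ because $\theta(0)=1$, so
\[
\mathrm{div}_X T_1 u=-u(X)+\int_{\R^d}\nabla N(X-Y)\cdot\nabla_X\theta(|X-Y|)\,u(Y)\,dY.
\]
To justify this rigorously one pairs $T_1 u$ with a test function $\varphi$, applies Fubini (the kernel $\nabla N$ has integrable singularity $|X-Y|^{-d+1}$ and $\theta$ is compactly supported, so all integrals converge absolutely), and integrates by parts in $X$ to transfer the derivative onto the smooth pair $\nabla N\cdot\theta$, at which point the classical identity for $\Delta N$ applies away from the diagonal and the delta contribution is read off from the jump.

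Next I would handle $T_{ij}w^{ij}$. The crucial observation is that the kernel $K_{ij}(X-Y)=\partial_i\partial_j[\nabla N(X-Y)(1-\theta(|X-Y|))]$ is genuinely smooth on all of $\R^d$, because $1-\theta$ vanishes identically on $\{|X-Y|\le 1\}$ and thereby annihilates the singularity of $N$. Consequently $\mathrm{div}_X$ commutes with the integral with no distributional subtlety, and
\[
\mathrm{div}_X\bigl[\nabla N(X-Y)(1-\theta(|X-Y|))\bigr]=\Delta N(X-Y)(1-\theta(|X-Y|))-\nabla N(X-Y)\cdot\nabla_X\theta(|X-Y|).
\]
The first term on the right is zero: $\Delta N$ is supported at the diagonal as a distribution, but $1-\theta$ vanishes in a neighborhood of it, so the product is zero (and even pointwise zero off the diagonal since $\Delta N=0$ there classically). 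Therefore
\[
\mathrm{div}_X T_{ij}w^{ij}=-\int_{\R^d}\partial_i\partial_j\bigl[\nabla N(X-Y)\cdot\nabla_X\theta(|X-Y|)\bigr]w^{ij}(Y)\,dY.
\]

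Adding the two identities and transposing $-u$ to the left yields exactly the claim. The only point where any care is needed is the distributional identification $\Delta N=-\delta$ in the computation of $\mathrm{div}_X T_1 u$; the $T_{ij}$ part is classical because the cutoff $1-\theta$ sterilizes the singularity. No other estimates are invoked, so the argument is essentially a direct calculation once the role of $\theta$ is understood.
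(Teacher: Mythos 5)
Your argument is correct, and your treatment of the $T_{ij}$ term coincides exactly with the paper's: the cutoff $1-\theta$ removes the singularity, the divergence passes inside the integral, $\Delta N$ vanishes off the diagonal, and only the $-\nabla N\cdot\nabla\theta$ contribution survives. For the $T_1$ term you take a genuinely different route. You invoke the distributional identity $\Delta N=-\delta$ directly, producing the $-u(X)$ term from the surface--flux (excision) computation. The paper instead decomposes $T_1u=\sum_{k\ge0}B_k(u)$ dyadically, where each kernel $\varphi_k$ is supported in an annulus away from the origin so that $\text{div}\,B_k(u)$ is computed classically, and then telescopes: $\text{div}\sum_{k=0}^{N}B_k(u)=\int(\varphi_0^*-\varphi_{N+1}^*)(X-Y)u(Y)\,dY$ with $\varphi_k^*(X)=\nabla N(X)\cdot\nabla\theta(2^k|X|)\ge0$ and $\int\varphi_k^*=1$, so that $\varphi_{N+1}^*$ is an approximate identity and the remainder $\int\varphi_{N+1}^*(X-Y)(u(Y)-u(X))\,dY$ is $O(2^{-N\alpha}[u]_{\alpha})\to0$. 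Both mechanisms encode the same normalization of the flux of $\nabla N$; yours is shorter and more standard, while the paper's avoids distribution theory altogether and makes explicit where the H\"older continuity of $u$ enters (it is what makes the approximate identity converge). One small point to add to your write-up: the excision argument yields the identity only in $\mathcal{D}'$, and to read it as the pointwise identity claimed in the lemma you should observe that both sides are continuous functions --- the left side because $T_1u+T_{ij}w^{ij}\in C^{1,\alpha}$ by Lemma \ref{lem:integral}, the right side because its kernels are integrable --- which requires the same hypothesis $u\in C^{0,\alpha}$ that the paper uses to kill its remainder term.
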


\begin{proof}
With the notations in Lemma \ref{lem:integral},
a direct calculation gives
\begin{align*}
&\text{div}\,T_{ij}(w^{ij})= -\int_{\R^d}\partial_i\partial_j\Big(\nabla N(X-Y)\cdot\nabla\theta(|X-Y|)\Big)w^{ij}(Y)dY,\\
&\text{div}B_{k}(u)= \int_{\R^d}\text{div}\varphi_k(X-Y)u(Y)dY,
\end{align*}
where
\begin{align*}
&\text{div}\varphi_k(X)=\nabla N(X)\cdot\nabla\big(\theta(2^k|X|)-\theta(2^{k+1}|X|)\big)=\varphi^*_k(X)-\varphi^*_{k+1}(X),\\
&\varphi^*_k(X)=\nabla N(X)\cdot\nabla\theta(2^{k}|X|)=-c_d\frac{2^{k}\theta'(2^{k}|X|)}{|X|^{d-1}}\geq 0.
\end{align*}
Therefore,
\beno
&&\text{div}\sum_{k=0}^NB_{k}(u)+u\\
&&=\int_{\R^d}(\varphi^*_0(X-Y)-\varphi^*_{N+1}(X-Y))u(Y)dY+u(X)\\
&&=\int_{\R^d}\varphi^*_0(|X-Y|)u(Y)dY
-\int_{\R^d}\varphi^*_{N+1}(X-Y)(u(Y)-u(X))dY\\
&&\tre I^*_0-I^*_{N+1}.
\eeno
Here we used $\int_{\R^d}\varphi^*_k(X)dX=1.$ Now,
\beno
\left|I^*_{N+1}\right|\leq {[u]_{\a}}\int_{\R^d}\varphi^*_{N+1}(X-Y)|X-Y|^{\a}dY=C[u]_{\a}2^{-N\a}\longrightarrow 0,
\eeno
as $N\rightarrow +\infty$. This proves the lemma.
\end{proof}

We also introduce
\begin{align*}
T_1(u,R)\tre&\int_{\R^d}\nabla N(X-Y)\theta(|X-Y|/R)u(Y)dY,\nonumber\\
T_{ij}(w,R)\tre&\int_{\R^d}\partial_i\partial_j\Big(\nabla N(X-Y)(1-\theta(|X-Y|/R))\Big)w(Y)dY,
\end{align*}
where $N(X)$ is the Newton potential.
Let $R\ge 1$.
If  $h(X)\leq C_0h(Y)$ for $|X-Y|\leq 2R$, then we can deduce by following the proof of Lemma \ref{lem:integral}  that
\beno
&&|T_1(u,R)|_{1,\al;g,R}+|T_{ij}(w,R)|_{1,\al;g,R}\le C\big(R^2|u|_{0,\al;h,R}+|w|_{0;h}\big),
\eeno
where  $g(X)=\int_{\R^d}\frac {h(Y)} {R^{d+1}+|X-Y|^{d+1}}dy$.
Due to (\ref{prop:f+-}),  we also have
\beno
&& R^{-1}|T_1(u,R)|_{1,\al;f_{\pm}(t),R}+|T_{ij}(w,R)|_{0,\al;f_{\pm}(t),R}\le C\big(|u|_{0,\al;f_{\pm}(t),R}+R^{-1}|w|_{0;f_{\pm}(t)}\big)
\eeno
for $R=\sqrt{1+\mu_1t}.$

In particular, we have
\begin{align}
&|I(u,w)|_{1,\a;g(t),(1+\mu_1 t)^{1/2}}\nonumber\\
&\leq C\big((1+\mu_1 t)|\nabla u|_{0,\a;f_+(t),(1+\mu_1 t)^{1/2}}|\nabla w|_{0,\a;f_-(t),(1+\mu_1 t)^{1/2}}+| u|_{0;f_+(t)}| w|_{0;f_-(t)}\big)\nonumber\\
&\leq C| u|_{1,\a;f_+(t),(1+\mu_1 t)^{1/2}}| w|_{1,\a;f_-(t),(1+\mu_1 t)^{1/2}},\label{eq:I(u,w)-est1}
\end{align}
where $g,f_{\pm}$ are defined as in section 4.4.

For $\text{div} u=\text{div} w=0,$ we have
\beno
I(u,w)\tre T_1(\pa_i{u}^j\pa_j{w}^i,R)+T_{ij}( {u}^i{w}^j,R)=\pa_iT_1({u}^j\pa_j{w}^i,R)+T_{ij}({u}^i{w}^j,R).
\eeno
Therefore, we deduce
\begin{align}
&|I(u,w)|_{0,\a;f_\pm(t)}\leq C|u|_{0,\a;1,(1+\ga t)^{1/2}}|w|_{1,\a;f_\pm(t),(1+\ga t)^{1/2}}(1+\ga t)^{-\f12}.\label{eq:I(u,w)-est2}
\end{align}

\subsection{Weighted H\"{o}lder estimates for the heat  operator}
Let $H(t)$ be the heat operator given by
\beno
H(t)f(X)\eqdef \frac{1}{(4\pi t)^{d/2}}\int_{\R^d}e^{-\frac{|X-Y|^2}{4t}}f(Y)dY=\int_{\R^d}K(t,X-Y)f(Y)dY,
\eeno
where $K(t,X)=(4\pi t)^{-d/2}e^{-\frac{|X|^2}{4t}}$.
Let $\al\ge 0$ and $k\in \N$. It is easy to verify the following properties
\ben\label{eq:ker}
\begin{split}
&|\nabla^kK(t,X)|\leq Ct^{-\frac{k}{2}}K(2t,X),\\
&|\nabla^kK(t,X)||X'|^{\a}\leq Ct^{-\frac{k-\a}{2}}K(2t,X),\\
&|\nabla^kK(t,X)-\nabla^kK(t,Y)|\leq Ct^{-\frac{k+1}{2}}K(2t,X)|X-Y|,\\
&|\nabla^kK(t,X)-\nabla^kK(t,Y)||X'|^{\a}\leq Ct^{-\frac{k+1-\a}{2}}K(2t,X)|X-Y|,
\end{split}
\een
for any $ X',Y\in B(X, \sqrt{t})$. Here $C$ is a constant independent of $t$.

We introduce the following seminorm
$$
[u]_{1;h}\eqdef \sup_{X,Y\in \R^d}\frac{| u(X)- u(Y)|}{(h(X)+h(Y))|X-Y|}.
$$
Then it is easy to check that \ben\label{22} [u]_{\a;h}\leq [u]_{1;h}^{\a}|u|_{0;h}^{1-\a},\quad |\nabla u|_{0;h}\leq 2[u]_{1;h}.\een
\begin{lemma}\label{lem:heat1}
Let $u\in C^{0,\al}_h(\R^d)$ with $0<h<C_0$ and $\al\in (0,1)$. Then there exists a constant  $C>0$ depending only on $d,\a, k$ so that for $k\in \N$,
\beno
&& |\nabla^kH(t)u|_{0;H(2t)h}\le Ct^{-\frac{k}{2}}|u|_{0;h},\\
&&[\nabla^kH(t)u]_{1;H(2t)h}\le Ct^{-\frac{k+1}{2}}|u|_{0;h},\\ &&[\nabla^kH(t)u]_{\a;H(2t)h}\le Ct^{-\frac{k}{2}}[u]_{\a;h},\\
&& [\nabla^kH(t)u]_{1;H(2t)h}\le Ct^{-\frac{k+1-\a}{2}}[u]_{\a;h}.
\eeno
\end{lemma}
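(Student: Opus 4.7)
I would prove the four bounds in order, combining the pointwise kernel estimates in \eqref{eq:ker} with the convolution representation $\nabla^k H(t)u(X) = \int \nabla^k K(t,X-Z) u(Z)\,dZ$. The first estimate is immediate: from $|\nabla^k K(t,X-Z)| \leq C t^{-k/2} K(2t,X-Z)$ and $|u(Z)| \leq h(Z)|u|_{0;h}$, integration gives $|\nabla^k H(t)u(X)| \leq C t^{-k/2} H(2t)h(X)|u|_{0;h}$. For the third estimate, exploit the translation-invariance of the kernel by writing
\[
\nabla^k H(t)u(X) - \nabla^k H(t)u(Y) = \int \nabla^k K(t,W)\bigl(u(X-W) - u(Y-W)\bigr)\,dW,
\]
applying the weighted H\"older bound $|u(X-W)-u(Y-W)| \leq (h(X-W)+h(Y-W))|X-Y|^\alpha [u]_{\alpha;h}$, and using bound 1 of \eqref{eq:ker} to recover $C t^{-k/2}(H(2t)h(X)+H(2t)h(Y))$ after the change of variables.

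\textbf{Lipschitz-type bounds via case split.} For the second estimate I would split on whether $|X-Y|\leq\sqrt{t}$. In the small-$|X-Y|$ regime, the third bound of \eqref{eq:ker} applied to the kernel difference yields an extra $|X-Y|$ factor at the cost of $t^{-1/2}$, after which integration against $|u(Z)|\leq h(Z)|u|_{0;h}$ gives the result. When $|X-Y|>\sqrt{t}$, apply the first estimate separately at $X$ and $Y$ and use $\sqrt{t}\leq|X-Y|$ to convert a power of $t^{-1/2}$ into a factor of $|X-Y|$. The large-$|X-Y|$ case of the fourth estimate is similar: the already-proven third estimate combined with the interpolation $|X-Y|^\alpha = |X-Y|\cdot|X-Y|^{\alpha-1} \leq t^{(\alpha-1)/2}|X-Y|$ yields the desired form.

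\textbf{Main obstacle: fourth estimate when $|X-Y|\leq\sqrt{t}$.} Here I would use the kernel-difference formula
\[
\nabla^k H(t)u(X) - \nabla^k H(t)u(Y) = \int \bigl(\nabla^k K(t,X-Z) - \nabla^k K(t,Y-Z)\bigr)\bigl(u(Z) - u(X)\bigr)\,dZ,
\]
valid because the kernel-difference has zero mean (for both $k=0$ and $k\ge 1$). The H\"older bound $|u(Z)-u(X)| \leq (h(Z)+h(X))|Z-X|^\alpha[u]_{\alpha;h}$ contributes a factor $|Z-X|^\alpha$, which I would match against the $|X'|^\alpha$ in bound 4 of \eqref{eq:ker} by choosing $X' = X-Z$ (which trivially lies in $B(X-Z,\sqrt{t})$); this produces the factor $C t^{-(k+1-\alpha)/2}|X-Y|K(2t,X-Z)$. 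The integrated $h(Z)$-contribution then yields the desired weight $H(2t)h(X)$, but the $h(X)$-contribution yields only $h(X)$, which must be absorbed into $H(2t)h(X)+H(2t)h(Y)$. This is the main technical difficulty: closing the gap requires a symmetrized version of bound 4 (with $K(2t,X-Z)+K(2t,Y-Z)$ obtained by swapping $X\leftrightarrow Y$) together with a comparison between $h$ and its heat-smoothing. In the applications of this lemma in Proposition \ref{prop:parabolic}, the input weight is itself of the form $H(s)\phi$, so the comparison $h \lesssim H(2t)h$ is transparent via the semigroup property $H(s)H(2t) = H(s+2t)$, and boundedness $0<h<C_0$ suffices to handle the remaining constants.
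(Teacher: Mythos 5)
Your treatment of the first three inequalities and of the large-separation case ($|X-Y|\ge\sqrt t$) of the fourth coincides with the paper's proof. The genuine gap is exactly where you locate it, in the fourth estimate for $|X-Y|\le\sqrt t$, and your proposed resolution does not close it under the lemma's hypotheses. Subtracting $u(X)$ forces you to bound a term proportional to $h(X)$ by $H(2t)h(X)+H(2t)h(Y)$, and the comparison $h\le C\,H(2t)h$ with $C$ depending only on $d,\alpha,k$ is simply false for a general positive bounded weight: take $h$ equal to $1$ at a point $X_0$ and close to $0$ on the rest of $B(X_0,\sqrt t)$; then $H(2t)h(X_0)\ll h(X_0)$. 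The hypothesis $0<h<C_0$ is an upper bound and gives no control in the needed direction, and invoking the structure of the weights used later in Proposition \ref{prop:parabolic} amounts to adding a hypothesis to the lemma rather than proving it as stated.

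The paper's fix is to subtract $u(Y')$ for an auxiliary point $Y'$ rather than $u(X)$: choose $Y'\in B(X,\sqrt t)$ so that
\begin{equation*}
h(Y')\int_{B(X,\sqrt t)}K(2t,X-X')\,dX'\le\int_{B(X,\sqrt t)}K(2t,X-X')h(X')\,dX'\le H(2t)h(X),
\end{equation*}
i.e.\ take $Y'$ (nearly) minimizing $h$ on $B(X,\sqrt t)$; since $\int_{B(X,\sqrt t)}K(2t,X-X')\,dX'\ge c_d>0$, this gives $h(Y')\le C\,H(2t)h(X)$ with a universal constant. The zero-mean kernel difference still allows the subtraction of the constant $u(Y')$, the factor $|X'-Y'|^\alpha$ remains admissible in the fourth bound of \eqref{eq:ker} because $|X-Y'|\le\sqrt t$, and the two weight contributions $h(X')$ and $h(Y')$ are then both dominated by $H(2t)h(X)$ after integration. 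With this one substitution your argument goes through verbatim; without it, the small-separation case of the fourth inequality is not proved.
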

\begin{proof}
Thanks to  (\ref{eq:ker}),  we have
 \begin{align*} \
 |\nabla^kH(t)u(X)|&=\left|\int_{\R^d}\nabla^kK(t,X-Y)u(Y)dY\right|\\
 &\leq \int_{\R^d}|\nabla^kK(t,X-Y)||u(Y)|dY\\
 &\leq Ct^{-\frac{k}{2}}\int_{\R^d}K(2t,X-Y)h(Y)dY |u|_{0;h}\\
 &\leq Ct^{-\frac{k}{2}}H(2t)h(X) |u|_{0;h},
 \end{align*}
which gives the first inequality.

If $|X-Y|< \sqrt{t},$ then we get by (\ref{eq:ker}) that
\begin{align*}
|\nabla^kH(t)u(X)-\nabla^kH(t)u(Y)|&=\left|\int_{\R^d}(\nabla^kK(t,X-X')-\nabla^kK(t,Y-X'))u(X')dX'\right|\\
&\leq \int_{\R^d}|\nabla^kK(t,X-X')-\nabla^kK(t,Y-X')||u(X')|dX'\\
&\leq Ct^{-\frac{k+1}{2}}|X-Y|\int_{\R^d}K(2t,X-X')h(X')dX' |u|_{0;h} \\
&\leq Ct^{-\frac{k+1}{2}}|X-Y|H(2t)h(X) |u|_{0;h},
\end{align*}
and if $|X-Y|\geq \sqrt{t},$ then
\begin{align*}
|\nabla^kH(t)u(X)-\nabla^kH(t)u(Y)|&\leq|\nabla^kH(t)u(X)|+|\nabla^kH(t)u(Y)|\\
&\leq Ct^{-\frac{k}{2}}H(2t)h(X) |u|_{0;h}+Ct^{-\frac{k}{2}}H(2t)h(Y) |u|_{0;h}\\
&\leq Ct^{-\frac{k+1}{2}}|X-Y|(H(2t)h(X)+H(2t)h(Y)) |u|_{0;h},
\end{align*}
which imply the second inequality.

For any $X,Y\in \R^d$, we have
\begin{align*}
\big|\nabla^kH(t)u(X)-\nabla^kH(t)u(Y)\big|=&\left|\int_{\R^d}\nabla^kK(t,X')u(X-X')dX'-\int_{\R^d}\nabla^kK(t,X')u(Y-X')dX'\right|\\ \leq& \int_{\R^d}|\nabla^kK(t,X')||u(X-X')-u(Y-X')|dX'\\ \leq& Ct^{-\frac{k}{2}}\int_{\R^d}K(2t,X')(h(X-X')+h(Y-X'))dX' |X-Y|^{\a}[u]_{\a;h}\\ \leq& Ct^{-\frac{k}{2}}(H(2t)h(X)+H(2t)h(Y)) |X-Y|^{\a}[u]_{\a;h},
\end{align*}
which gives the third inequality.

For any $X,Y\in\R^d$, if $|X-Y|< \sqrt{t},$ we take $Y'\in B(X,\sqrt{t})$ so that
 \beno
 h(Y')\int_{B(X,\sqrt{t})}K(2t,X-X')dX'\leq \int_{B(X,\sqrt{t})}K(2t,X-X')h(X')dX'\leq H(2t)h(X),
 \eeno
which gives $h(Y')\leq CH(2t)h(X)$. Then we deduce for $|X-Y|< \sqrt{t}$,\begin{align*}
&|\nabla^kH(t)u(X)-\nabla^kH(t)u(Y)|\\
&=\left|\int_{\R^d}(\nabla^kK(t,X-X')-\nabla^kK(t,Y-X'))(u(X')-u(Y'))dX'\right|\\
&\leq \int_{\R^d}|\nabla^kK(t,X-X')-\nabla^kK(t,Y-X')||u(X')-u(Y')|dX'\\
&\leq \int_{\R^d}|\nabla^kK(t,X-X')-\nabla^kK(t,Y-X')||X'-Y'|^{\a}(h(X')+h(Y'))dX'[u]_{\a;h}\\
&\leq Ct^{-\frac{k+1-\a}{2}}|X-Y|\int_{\R^d}K(2t,X-X')(h(X')+h(Y'))dX' [u]_{\a;h}\\
&\leq Ct^{-\frac{k+1-\a}{2}}|X-Y|(H(2t)h(X)+h(Y')) [u]_{\a;h}\\
&\leq Ct^{-\frac{k+1-\a}{2}}|X-Y|H(2t)h(X) [u]_{\a;h}.
\end{align*}
While, if $|X-Y|\geq \sqrt{t},$ then
\begin{align*}
|\nabla^kH(t)u(X)-\nabla^kH(t)u(Y)| \leq& Ct^{-\frac{k}{2}}\big(H(2t)h(X)+H(2t)h(Y)\big) |X-Y|^{\a}[u]_{\a;h}\\
\leq& Ct^{-\frac{k+1-\a}{2}}(H(2t)h(X)+H(2t)h(Y)) |X-Y|[u]_{\a;h}.
\end{align*}
This proves the fourth inequality.
\end{proof}

\begin{lemma}\label{lem:heat2}
Let $\ga>0, k\geq0,$ and $u\in C^{0,\al}_h(\R^d)$ with $0<h<C_0$. Let $R\geq\sqrt{t}>0$. Then there exists a constant $C>0$ depending only on $d,\al$ so that
\begin{align*}
& |H( t)u|_{1,\a;H(2 t)h,\sqrt{k+t}}\leq C|u|_{1,\a;h,\sqrt{k}},\\
& |H( t)u|_{1,\a;H(2 t)h,R}\leq C\varphi_{\a}( R)/\varphi_{\a}(  \sqrt{t})|u|_{0,\a;h},
\end{align*}
where $\varphi_{\a}( R)=\max(R,R^{1+\a})$
\end{lemma}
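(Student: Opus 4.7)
The plan is to unpack the scaled weighted norms $|\cdot|_{1,\alpha;\cdot,R}$ on both sides according to their definition and control each resulting piece using Lemma~\ref{lem:heat1}, combining two complementary tools: the smoothing estimates already proved in Lemma~\ref{lem:heat1}, and the commutation identity $\nabla H(t)u = H(t)\nabla u$. Recall that for $R\geq 1$ the norm $|u|_{1,\alpha;h,R}$ is equivalent to $|u|_{0;h}+[u]_{\alpha;h}+R|\nabla u|_{0;h}+R^{1+\alpha}[\nabla u]_{\alpha;h}$, and for $R<1$ the two gradient coefficients become $R^{1-\alpha}$ and $R$ respectively; in either case, each inequality reduces to four weighted controls on $H(t)u$, $\nabla H(t)u$ and their $\alpha$-seminorms.

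For the first inequality I would split into two regimes according to the relative size of $k$ and $t$. When $k\geq t$, so $\sqrt{k+t}\sim\sqrt{k}$, I move the gradient through the heat semigroup via $\nabla H(t)u = H(t)\nabla u$ and apply Lemma~\ref{lem:heat1} (with derivation order zero) to $\nabla u$, obtaining $|\nabla H(t)u|_{0;H(2t)h}\leq C|\nabla u|_{0;h}$ and $[\nabla H(t)u]_{\alpha;H(2t)h}\leq C[\nabla u]_{\alpha;h}$; these are absorbed directly into $\max(\sqrt{k},\sqrt{k}^{1-\alpha})|\nabla u|_{0,\alpha;h,\sqrt{k}}$ since the $\max$-weights on the two sides are comparable in this regime. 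When $k<t$, so $\sqrt{k+t}\sim\sqrt{t}$, I use instead the smoothing bounds of Lemma~\ref{lem:heat1} with derivation order one, namely $|\nabla H(t)u|_{0;H(2t)h}\leq C\min(t^{-1/2}|u|_{0;h},\,t^{-(1-\alpha)/2}[u]_{\alpha;h})$ and $[\nabla H(t)u]_{\alpha;H(2t)h}\leq Ct^{-1/2}[u]_{\alpha;h}$; the prefactors $\max(\sqrt{k+t},\sqrt{k+t}^{1-\alpha})$ and $\sqrt{k+t}^{\alpha}$ combine with these $t^{-j/2}$ decays to yield a bounded coefficient after separating the subcases $\sqrt{t}\geq 1$ and $\sqrt{t}<1$ (the latter using the $[u]_{\alpha;h}$ bound so as not to blow up near $t=0$), and the contribution is majorized by $|u|_{0,\alpha;h}\leq |u|_{1,\alpha;h,\sqrt{k}}$.

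For the second inequality the right-hand side is only $|u|_{0,\alpha;h}$, so the commutation trick is unavailable and the estimate must come entirely from smoothing. In addition to the four bounds of Lemma~\ref{lem:heat1} I would first record the two companion estimates $[H(t)u]_{\alpha;H(2t)h}\leq Ct^{-\alpha/2}|u|_{0;h}$ and $[\nabla H(t)u]_{\alpha;H(2t)h}\leq Ct^{-(1+\alpha)/2}|u|_{0;h}$, each obtained by the same near/far splitting at scale $\sqrt{t}$ used in the proof of the fourth inequality of Lemma~\ref{lem:heat1}: on $|X-Y|<\sqrt{t}$ use the Lipschitz bound $Ct^{-(k+1)/2}|u|_{0;h}|X-Y|=Ct^{-(k+\alpha)/2}|u|_{0;h}|X-Y|^\alpha\cdot(|X-Y|/\sqrt{t})^{1-\alpha}$, and on $|X-Y|\geq\sqrt{t}$ use the $C^0$ bound $Ct^{-k/2}|u|_{0;h}=Ct^{-(k+\alpha)/2}|u|_{0;h}|X-Y|^{\alpha}\cdot(|X-Y|/\sqrt{t})^{-\alpha}$. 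With all four smoothing estimates in hand, each of $|H(t)u|_{0;H(2t)h}$, $R^{\alpha}[H(t)u]_{\alpha;H(2t)h}$, $\max(R,R^{1-\alpha})|\nabla H(t)u|_{0;H(2t)h}$ and $\max(R^{1+\alpha},R)[\nabla H(t)u]_{\alpha;H(2t)h}$ is bounded by $C t^{-j/2}|u|_{0,\alpha;h}$ times a power of $R$; the hypothesis $R\geq\sqrt{t}$ is then used to verify that the resulting coefficient is dominated by $\varphi_\alpha(R)/\varphi_\alpha(\sqrt{t})$ through an elementary case analysis over $R\lessgtr 1$ and $\sqrt{t}\lessgtr 1$, each case reducing to a scalar inequality of the form $(\sqrt{t}/R)^{\alpha}\leq 1$.

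The principal obstacle is neither the individual kernel estimates nor the commutation argument, both of which are straightforward; it is the combinatorial bookkeeping across the sign regimes of $R-1$, $\sqrt{t}-1$ and $k-t$, and checking that the $\max(\cdot,\cdot^{1-\alpha})$ weights on the two sides match correctly in each regime. Once that case analysis is laid out, every verification collapses to a scalar inequality derived from $\sqrt{t}\leq R$ (for the second bound) or from $\sqrt{t}\leq\sqrt{k+t}$ together with the commutation identity (for the first).
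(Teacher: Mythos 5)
Your proposal is correct and follows essentially the same route as the paper: both rest on the smoothing estimates of Lemma \ref{lem:heat1} (together with the interpolation $[v]_{\a;h}\leq[v]_{1;h}^{\a}|v|_{0;h}^{1-\a}$ to produce the $t^{-\a/2}$ and $t^{-(1+\a)/2}$ companion bounds), the commutation $\nabla H(t)u=H(t)\nabla u$ for the $k$-dependent part of the first inequality, and the elementary use of $R\geq\sqrt t$ via $\max(R,R^{1-\a})\,R^{\a}=\varphi_\a(R)$ and $\min(t^{-1/2},t^{-(1+\a)/2})=\varphi_\a(\sqrt t)^{-1}$ for the second. The only cosmetic difference is that the paper splits $\max((k+t)^{a},(k+t)^{b})$ additively into a $k$-part and a $t$-part rather than distinguishing the regimes $k\gtrless t$, which amounts to the same bookkeeping.
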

\begin{proof}
By Lemma \ref{lem:heat1} and (\ref{22}), we have
\beno
&&|H( t)u|_{0;H(2 t)h}\leq C|u|_{0;h},\ [H( t)u]_{\a;H(2 t)h}\leq C[u]_{\a;h},\ |H( t)u|_{0,\a;H(2 t)h}\leq C|u|_{0,\a;h},\\
&&|\nabla H( t)u|_{0;H(2 t)h}\leq \min(Ct^{-\frac{1}{2}}|u|_{0;h},Ct^{-\frac{1-\a}{2}}[u]_{\a;h})\leq C\min(t^{-\frac{1}{2}},t^{-\frac{1-\a}{2}})|u|_{0,\a;h},\\
&&[\nabla H( t)u]_{\a;H(2 t)h}\leq \min(Ct^{-\frac{1+\a}{2}}|u|_{0;h},Ct^{-\frac{1}{2}}[u]_{\a;h})\leq C\min(t^{-\frac{1+\a}{2}},t^{-\frac{1}{2}})|u|_{0,\a;h}.
\eeno
Due to $\nabla H( t)u=H( t) \nabla u$, we have
$$|\nabla H( t)u|_{0;H(2 t)h}\leq C|\nabla u|_{0;h},\quad [\nabla H( t)u]_{\a;H(2 t)h}\leq C[\nabla u]_{\a;h}.$$
Therefore,
\begin{align*}
 |H( t)u|_{1,\a;H(2 t)h,\sqrt{k+t}}=&|H( t)u|_{0,\a;H(2 t)h}+\max((k+t)^{\frac{1-\a}{2}},(k+t)^{\frac{1}{2}})|\nabla H( t)u|_{0;H(2 t)h}\\
 &+\max((k+t)^{\frac{1}{2}},(k+t)^{\frac{1+\a}{2}})[\nabla H( t)u]_{\a;H(2 t)h}\\
\leq& C|u|_{0,\a;h}+\max(k^{\frac{1-\a}{2}},k^{\frac{1}{2}})|\nabla H( t)u|_{0;H(2 t)h}+\max(t^{\frac{1-\a}{2}},t^{\frac{1}{2}})|\nabla H( t)u|_{0;H(2 t)h}\\ &+\max(k^{\frac{1}{2}},k^{\frac{1+\a}{2}})[\nabla H( t)u]_{\a;H(2 t)h}+\max(t^{\frac{1}{2}},t^{\frac{1+\a}{2}})[\nabla H( t)u]_{\a;H(2 t)h}\\
\leq& C|u|_{0,\a;h}+C\max(k^{\frac{1-\a}{2}},k^{\frac{1}{2}})|\nabla u|_{0;h}+C|u|_{0,\a;h}\\
&+C\max(k^{\frac{1}{2}},k^{\frac{1+\a}{2}})[\nabla u]_{\a;h}+C|u|_{0,\a;h}\leq C|u|_{1,\a;h,\sqrt{k}},
\end{align*}
which gives the first inequality. Also,
\begin{align*}
|H( t)u|_{1,\a;H(2 t)h,R}=&|H( t)u|_{0,\a;H(2 t)h}+\max(R^{1-\a},R)(|\nabla H( t)u|_{0;H(2 t)h}+R^{\a}[\nabla H( t)u]_{\a;H(2 t)h})\\
 \leq& C|u|_{0,\a;h}+\max(R,R^{1+\a})(t^{-\frac{\a}{2}}|\nabla H( t)u|_{0;H(2 t)h}+[\nabla H( t)u]_{\a;H(2 t)h})\\
 \leq& C|u|_{0,\a;h}+C\varphi_{\a}( R)\min(t^{-\frac{1+\a}{2}},t^{-\frac{1}{2}})|u|_{0,\a;h}\\
 \leq& C\varphi_{\a}( R)/\varphi_{\a}(  \sqrt{t})|u|_{0,\a;h},
\end{align*}
which gives the second inequality.
\end{proof}

\subsection{Riesz transform in the weighted H\"{o}lder spaces}

Throughout this subsection, we take $f,f_1,f_\pm$ be as in section 4.4.
We need the following property for the weight functions.
\begin{lemma}\label{lem:weight2} For $h=1,f_1(t),f(t),f_{\pm}(t),$ we have
\ben\label{f2}
R^{-d}\int_{B(X,R)}h(Y)f_1(t,Y)dY\leq Ch(X)\min(R^{-\delta},(1+\mu_1 t)^{-\frac{\delta}{2}}) .
\een
\end{lemma}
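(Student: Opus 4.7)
Set $R_0=(1+\mu_1 t)^{1/2}$, so the right-hand side equals $Ch(X)\min(R^{-\delta},R_0^{-\delta})$. The natural approach splits into the two regimes $R\le R_0$ and $R>R_0$, and exploits two structural features of the weights: first, every $h\in\{1,f(t),f_1(t),f_\pm(t)\}$ depends only on $(y_1,y_2)$ (for $h=f_\pm(t)$ using that $B_0=(1,0,\dots,0)$ acts by translation in $y_1$), and $f_1(t,\cdot)$ depends only on $y_2$; hence integrating out $y_3,\dots,y_d$ contributes a factor $\le CR^{d-2}$ and reduces everything to a two-dimensional integral in the $(y_1,y_2)$-plane. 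Second, \eqref{f1} gives the pointwise envelope $f_1(t,Y)\le C\min(|y_2|^{-\delta},R_0^{-\delta})$.

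In the small-scale regime $R\le R_0$ the estimate is essentially immediate. Here $\min(R^{-\delta},R_0^{-\delta})=R_0^{-\delta}$, and by Lemma~\ref{lem:w1}(1) every $h$ in the list satisfies \eqref{ass:W1} at scale $R_0$, so $h(Y)\le Ch(X)$ on $B(X,R)\subset B(X,R_0)$. Combined with the uniform bound $f_1(t,Y)\le CR_0^{-\delta}$ and $|B(X,R)|\le CR^d$ this yields $R^{-d}\int h\,f_1\,dY\le Ch(X)R_0^{-\delta}$ directly.

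In the large-scale regime $R>R_0$ we must produce the sharper factor $R^{-\delta}$ out of the $y_2$-decay of $f_1$. Using $f_1(t,Y)\le C|y_2|^{-\delta}$ and the slice integration noted above, matters reduce to proving the two-dimensional estimate
\begin{equation*}
\int_{|(y_1,y_2)-(x_1,x_2)|\le R}h(Y)\,|y_2|^{-\delta}\,dy_1\,dy_2\le CR^{2-\delta}\,h(X).
\end{equation*}
For $h=1$ this follows immediately from $\int_{|y_2|\le 2R}|y_2|^{-\delta}dy_2\le CR^{1-\delta}$. For $h=f_1(t)$ one reuses $f_1(t,Y)\le C|y_2|^{-\delta}$, making the integrand $|y_2|^{-2\delta}$; the radial integral converges because $\delta<1/2$ and yields $CR^{2-2\delta}$, which is absorbed into $CR^{2-\delta}f_1(t,X)$ after a short case-check on the size of $|x_2|$ relative to $R_0$ and $R$. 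For $h=f(t)$ (and, after translation, $h=f_\pm(t)$) one applies the pointwise bound $h(Y)\le C(|(y_1,y_2)|+R_0)^{-(1+\delta)}$, transforms to polar coordinates in $(y_1,y_2)$, and uses
\[
\int_0^{2\pi}|\sin\theta|^{-\delta}d\theta\le C,\qquad\int_0^{2R}(r+R_0)^{-(1+\delta)}r^{1-\delta}\,dr\le CR^{1-2\delta};
\]
a case split according to whether $|(x_1,x_2)|\le R$ or $|(x_1,x_2)|>R$ (and in the latter case, whether $|x_2|\ge 2R$ or $|x_2|<2R$) then produces the required $CR^{2-\delta}h(X)$.

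\textbf{Main obstacle.} The substantive part is the case-by-case analysis for $h=f(t)$ and $h=f_\pm(t)$ in the regime $R>R_0$. When $X$ lies far from the singular locus $\{y_1=y_2=0\}$, the ratio $h(Y)/h(X)$ inside the ball can be polynomially large and one cannot simply dominate $h(Y)$ by $Ch(X)$; one has to trade this polynomial growth against the decay of $|y_2|^{-\delta}$ after paying the cost of crossing from $X$ into the singular set $\{y_2=0\}$. The critical input throughout is the assumption $\delta<1/2$, which is exactly what keeps all the resulting radial integrals of type $\int r^{-2\delta}\,dr$ and $\int(r+R_0)^{-(1+\delta)}r^{1-\delta}\,dr$ convergent up to the upper endpoint $R$.
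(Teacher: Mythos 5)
Your proposal is correct and follows essentially the same strategy as the paper's proof: in the regime where the weight is comparable across the ball one uses \eqref{ass:W1} at scale $(1+\mu_1 t)^{1/2}$ together with the envelope \eqref{f1}, and in the complementary regime one integrates explicit power-law bounds on $h\,f_1$ against a lower bound $h(X)\gtrsim R^{-\delta}$ (resp. $R^{-1-\delta}$), with $\delta<\tfrac12$ guaranteeing convergence. The only differences are cosmetic: the paper organizes the cases by the distance of $X$ to the singular locus rather than by $R$ versus $(1+\mu_1t)^{1/2}$, and computes the key integral for $h=f(t)$ via the splitting $\phi_1(Y)\phi_0(Y)\le C|y_1|^{-\frac12-\delta}|y_2|^{-\frac12-\delta}$ and Fubini instead of your polar coordinates (just make sure your "far" threshold is $|(x_1,x_2)|>2R$, not $>R$, when you invoke $h(Y)\le Ch(X)$).
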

\begin{proof}
 The case  of $h=1$ follows from \eqref{f01}. We denote
 \beno
 \rho_1(X)=|x_2|,\quad \rho_2(X)=|(x_1,x_2)|\quad \text{for}\,X=(x_1,\cdots,x_d)\in\R^d.
  \eeno
Then by \eqref{f1}, {for $h=f_1(t)$} if $\rho_1(X)\geq 2R $ or $\rho_1(X)\leq 2\sqrt{1+\mu_1 t}, $  we have
\beno
h(Y)\leq Ch(X)\quad \text{for}\,\, |Y-X|\leq R,
\eeno
which gives,
\begin{align*}
R^{-d}\int_{B(X,R)}h(Y)f_1(t,Y)dY\leq& CR^{-d}\int_{B(X,R)}h(X)f_1(t,Y)dY\\\leq& C h(X)\min(R^{-\delta},(1+\mu_1 t)^{-\frac{\delta}{2}}) .
\end{align*}
Using \eqref{f}, the above inequality holds for $h=f(t)$ if $\rho_2(X)\geq 2R $ or $\rho_2(X)\leq 2\sqrt{1+\mu_1 t}$.

For the case of $h=f_1(t)$, if $2\sqrt{1+\mu_1 t}\leq\rho_1(X)\leq 2R $, then by \eqref{f1},
\beno
&&h(X)\geq C^{-1}\phi_1(X)\geq C^{-1}R^{-\delta},\\
 &&h(Y)f_1(t,Y)\leq C\phi_1(Y)^2=C\rho_1(Y)^{-2\delta},
\eeno
which imply
\beno
R^{-d}\int_{B(X,R)}h(Y)f_1(t,Y)dY\leq CR^{-d}\int_{B(X,R)}\rho_1(Y)^{-2\delta}dY\leq CR^{-2\delta}\leq Ch(X)R^{-\delta}.
\eeno

For the case of $h=f(t)$,  if $2\sqrt{1+\mu_1 t}\leq\rho_2(X)\leq 2R $,
then by \eqref{f},
\beno
&&h(X)\geq C^{-1}\phi_2(X)\geq C^{-1}R^{-1-\delta},\\
&&h(Y)f_1(t,Y)\leq C\phi_1(Y)\phi_2(Y)=C|y_1|^{-\frac{1}{2}-\delta}|y_2|^{-\frac{1}{2}-\delta},
\eeno
which imply
\beno
R^{-d}\int_{B(X,R)}h(Y)f_1(t,Y)dY\leq CR^{-1-2\delta}\leq Ch(X)R^{-\delta}.
\eeno
Therefore, \eqref{f2} is true for $h=f_1(t), f(t)$. The case of $h=f_{\pm}(t)$ follows from the case of $h=f(t)$ by translation.
\end{proof}

\begin{proposition}\label{prop:Riesz}
It holds that
\begin{align*}
&|[u,R_iR_j]\partial_kw|_{1,\a;f_\pm(t),(\mu_1 t)^{1/2}}\leq C|u|_{1,\a;f_\pm(t),(1+\mu_1 t)^{1/2}}|w|_{1,\a;f_1(t),(\mu_1 t)^{1/2}},\\
&|R_iR_j(uw)|_{0,\a;f_1(t)}\leq C(1+\mu_1 t)^{-\frac{\delta}{2}}(1+(\mu_1 t)^{-\frac{\a}{2}})|u|_{0,\a;f_1(t),(\mu_1 t)^{1/2}}|w|_{0,\a;f_1(t),(\mu_1 t)^{1/2}}.
\end{align*}
\end{proposition}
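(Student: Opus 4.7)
The plan is to deduce both bounds from a kernel decomposition at scale $R=(\mu_1t)^{1/2}$, combined with the sphere-cancellation of the Calder\'on--Zygmund kernel $K_{ij}=c_d\pa_i\pa_j N$ and the weight integration bound of Lemma~\ref{lem:weight2}. The common strategy is local/nonlocal splitting via the cutoff $\theta(|X-Y|/R)$, followed by dyadic summation in the nonlocal zone.

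For the second inequality I would write
\begin{align*}
R_iR_j(uw)(X) =&\int K_{ij}(X-Y)\theta(|X-Y|/R)\big(u(Y)w(Y)-u(X)w(X)\big)dY\\
&+\int K_{ij}(X-Y)(1-\theta(|X-Y|/R))u(Y)w(Y)\,dY,
\end{align*}
where subtracting $u(X)w(X)$ in the local piece is legitimate because $K_{ij}$ has zero mean on spheres. The local integral is bounded by $R^\al[uw]_\al$ through $\int_{|Z|\le 2R}|Z|^{\al-d}dZ\le CR^\al$, while the pointwise bound $|uw|\le |u|_{0;f_1}|w|_{0;f_1}f_1^2$ is used for the nonlocal one. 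After dyadic decomposition $\rho=2^kR$, Lemma~\ref{lem:weight2} with $h=f_1$ gives
\begin{align*}
R^{-d}\int_{B(X,\rho)}f_1(t,Y)^2\,dY\le Cf_1(t,X)\min(\rho^{-\delta},(1+\mu_1t)^{-\delta/2}),
\end{align*}
so summing the geometric series produces the factor $f_1(X)(1+\mu_1t)^{-\delta/2}$; combining with the $R^\al=(\mu_1t)^{\al/2}$ factor coming from the local piece yields the prefactor $(1+\mu_1t)^{-\delta/2}(1+(\mu_1t)^{-\al/2})$ stated in the proposition. The H\"older seminorm in $X$ follows by a two-point argument further split according to whether $|X-X'|\le R/2$.

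For the first inequality I would use the algebraic identity (valid at the kernel level)
\begin{align*}
[u,R_iR_j]\pa_k w=[u,R_iR_j\pa_k]w+R_iR_j(\pa_k u\cdot w).
\end{align*}
The second term is estimated as in the second inequality, with $\pa_k u\cdot w$ replacing $uw$ and Lemma~\ref{lem:weight2} now applied to $h=f_\pm$; this produces a bound in terms of $|u|_{1,\al;f_\pm,R}|w|_{1,\al;f_1,R}$. The commutator $[u,R_iR_j\pa_k]w$ has formal kernel $c_d\pa_i\pa_j\pa_k N(X-Y)(u(X)-u(Y))$, whose order-$(d+1)$ singularity is tamed back to Calder\'on--Zygmund order $d$ by the Lipschitz continuity of $u$: the Taylor expansion $u(X)-u(Y)=\na u(X)\cdot(X-Y)+O(|X-Y|^{1+\al})$ together with the sphere-cancellation of the antisymmetric part of $\pa_k(\pa_i\pa_j N)(Z)\otimes Z$ turns the local principal value into a convergent integral, while the nonlocal part converges absolutely by dyadic summation against the Lipschitz bound on $u$. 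The gradient $\pa_l([u,R_iR_j]\pa_k w)$ is then recovered from $\pa_l[u,R_iR_j]\pa_k w=[\pa_l u,R_iR_j]\pa_k w+[u,R_iR_j]\pa_l\pa_k w$: the first piece is of the form just estimated (with $\na u$ in place of $u$), while the second is reduced by an integration by parts back to the same structure with one more derivative transferred onto $u$.

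The main obstacle will be the weight bookkeeping: $u$ is controlled by $f_\pm(t)$ (decaying in the direction $\mp B_0$) while $w$ is controlled by the weaker, codimension-two weight $f_1(t)$, and since the Riesz transforms are nonlocal it is not automatic that the output remains carried by $f_\pm(t)$. This is precisely the content of the product bound $R^{-d}\int_{B(X,R)}f_\pm(Y)f_1(Y)\,dY\le Cf_\pm(X)\min(R^{-\delta},(1+\mu_1t)^{-\delta/2})$ from Lemma~\ref{lem:weight2}. Matching this bound with the correct powers of the scale $R=(\mu_1t)^{1/2}$ and of the H\"older exponent $\al$ so that the two right-hand sides come out in the precise form stated is where most of the technical work will lie.
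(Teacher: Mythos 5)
Your strategy is sound and is, modulo bookkeeping, the route the paper takes: the second inequality is exactly Lemma~\ref{Riesz1} applied with $h=f_1(t)$ (proved there by decomposing $R_iR_j+\delta_{ij}/d$ into dyadic pieces $R_{ij}^n$ with kernels $\partial_i\partial_jN\,(\theta(2^n|\cdot|)-\theta(2^{n+1}|\cdot|))$, using the kernel cancellation $\int\varphi_n=0$ at small scales and Lemma~\ref{lem:weight2} at large scales), followed by the elementary passage from the scale $(1+\mu_1t)^{1/2}$ to $(\mu_1t)^{1/2}$ which produces the factor $1+(\mu_1t)^{-\alpha/2}$; the first inequality is obtained from per-scale pointwise and Lipschitz bounds on $[u,R_{ij}^n]\partial_kw$ and $\partial_l[u,R_{ij}^n]\partial_kw$ (Lemmas~\ref{Riesz2} and~\ref{Riesz3}, and (\ref{f9})) derived from precisely the integrations by parts you describe, then summed in $n$ with a two-point argument split at $|X-Y|\lessgtr\sqrt{1+\mu_1t}$. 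Three places in your version call for care. First, the crossover between the cancellation regime and the pointwise regime should sit at $\sqrt{1+\mu_1t}$ rather than at $R=(\mu_1t)^{1/2}$: with your split, every dyadic shell with $(\mu_1t)^{1/2}\le\rho\le(1+\mu_1t)^{1/2}$ contributes a full $(1+\mu_1t)^{-\delta/2}$ (the minimum in Lemma~\ref{lem:weight2} saturates there), so the far sum carries an extra $\log\big(1+1/(\mu_1t)\big)$; this is absorbed by the allowed factor $1+(\mu_1t)^{-\alpha/2}$, so your bounds survive, but the estimate is lossy and would not give the clean $(1+\mu_1t)^{-\delta/2}$ of Lemma~\ref{Riesz1}. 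Second, your derivative identity routes through $[u,R_iR_j]\partial_l\partial_kw$, which is only formal for $w\in C^{1,\alpha}$; the integration by parts that removes the second derivative from $w$ (and the Taylor-plus-spherical-cancellation argument taming the resulting order-$(d+1)$ kernel) must be carried out on the compactly supported dyadic pieces, where every integral converges absolutely and only summability in $n$ is at stake --- this is exactly what formulas (\ref{f6})--(\ref{f7}) and the splitting $\partial_l[u,R_{ij}^n]\partial_kw=[u,\partial_lR_{ij}^n]\partial_kw+\partial_lu\,R_{ij}^n\partial_kw$ accomplish. Third, $R_iR_j$ differs from the principal-value integral of $\partial_i\partial_jN$ by the multiple $-\delta_{ij}/d$ of the identity; this commutes with $u$ and so drops out of the first inequality, but contributes a term $\delta_{ij}uw/d$ to the second, which is harmless since $|uw|_{0,\alpha;f_1}\le C(1+\mu_1t)^{-\delta/2}(1+(\mu_1t)^{-\alpha/2})$ under your normalization.
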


The proof of the proposition is very complicated. Let us begin with some
reductions. For fixed $i,j$,  we have
\beno
R_iR_jw(X)+\frac{\delta_{ij}}{d}w(X)=-p.v.\int_{\R^d}\partial_i\partial_jN(X-Y)w(Y)dY\tre\sum_{n=-\infty}^{\infty}R_{ij}^n(w),
\eeno
 where
\beno
R_{ij}^n(u)=-\int_{\R^d}\varphi_n(X-Y)u(Y)dY
\eeno
with $\varphi_n(X)=\partial_i\partial_jN(X)\big(\theta(2^n|X|)-\theta(2^{n+1}|X|)\big)$. Therefore,
\ben\label{eq:com}
[u,R_iR_j]\partial_kw=\sum_{n=-\infty}^{\infty}[u,R_{ij}^n]\partial_kw.
\een
\begin{lemma}\label{Riesz1} For $h=1,f_1(t),f(t),f_{\pm}(t),$  it holds that
\beno
|R_iR_j(u)|_{0,\a;h,(1+\mu_1 t)^{1/2}}\leq C(1+\mu_1 t)^{-\frac{\delta}{2}}|u|_{0,\a;hf_1(t),(1+\mu_1 t)^{1/2}}.
\eeno
\end{lemma}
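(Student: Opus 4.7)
The plan is to apply the dyadic decomposition already set up in \eqref{eq:com}, i.e., $R_iR_j u+\frac{\delta_{ij}}{d}u=\sum_{n\in\Z}R_{ij}^n u$, and exploit the standard annular structure: $\varphi_n$ is supported in $|X|\sim 2^{-n}$ and satisfies $|\varphi_n|\le C2^{nd}$, $|\nabla\varphi_n|\le C2^{n(d+1)}$, and the Calderón--Zygmund cancellation $\int\varphi_n=0$. Set $R=(1+\mu_1 t)^{1/2}$ and choose $n_0\in\Z$ with $2^{-n_0}\sim R$. For every admissible $h\in\{1,f(t),f_1(t),f_\pm(t)\}$, both $h$ and the product $hf_1(t)$ satisfy \eqref{ass:W1} with radius $R$, by Lemma \ref{lem:w1} and the trivial case, and $f_1(t,\cdot)\le C(1+\mu_1 t)^{-\delta/2}$ by \eqref{f1}.

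For the sup part of the target norm, split the sum at $n_0$. For $n\ge n_0$ the kernel is supported in a ball of radius $\lesssim R$, so cancellation converts $R_{ij}^n u(X)$ into $-\int\varphi_n(X-Y)(u(Y)-u(X))dY$, and \eqref{ass:W1} applied to $hf_1$ on that ball yields $|R_{ij}^n u(X)|\le C2^{-n\a}(hf_1)(t,X)[u]_{\a;hf_1(t)}$; summing produces a factor $R^\a$ which, combined with $f_1(t,X)\le C(1+\mu_1 t)^{-\delta/2}$, gives a total bound of $C(1+\mu_1 t)^{-\delta/2}h(X)R^\a[u]_{\a;hf_1(t)}$. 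For $n<n_0$, bound $|u(Y)|\le (hf_1)(Y)|u|_{0;hf_1(t)}$ and $|\varphi_n|\le C2^{nd}$; Lemma \ref{lem:weight2} with radius $2^{-n}$ controls the remaining integral of $hf_1$ by $C2^{-nd}h(X)2^{n\delta}$, giving $|R_{ij}^n u(X)|\le Ch(X)2^{n\delta}|u|_{0;hf_1(t)}$, whose geometric sum is $\sim 2^{n_0\delta}\sim R^{-\delta}=(1+\mu_1 t)^{-\delta/2}$.

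For the Hölder seminorm, fix $X,X'$ and set $r=|X-X'|$. The case $r\ge R$ follows at once from the pointwise bound since $r^\a/R^\a\ge 1$. For $r<R$, choose $n_1$ with $2^{-n_1}\sim r$, so $n_1>n_0$, and split into three ranges. For $n>n_1$, bound the difference by the two individual bounds, yielding $\le C2^{-n\a}((hf_1)(X)+(hf_1)(X'))[u]_{\a;hf_1(t)}$ with sum $\sim r^\a$. For $n_0\le n\le n_1$, use the cancellation identity $R_{ij}^n u(X)-R_{ij}^n u(X')=-\int[\varphi_n(X-Y)-\varphi_n(X'-Y)](u(Y)-u(X))dY$, the mean-value bound $|\varphi_n(X-Y)-\varphi_n(X'-Y)|\le Cr\cdot 2^{n(d+1)}\chi_{B(X,C2^{-n})}$, and the Hölder regularity of $u$, producing $\le Cr\cdot 2^{n(1-\a)}(hf_1)(X)[u]_{\a;hf_1(t)}$ with sum $\sim r^\a(hf_1)(X)[u]_{\a;hf_1(t)}$. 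For $n<n_0$, use the same mean-value bound together with $|u|\le (hf_1)|u|_{0;hf_1(t)}$ and Lemma \ref{lem:weight2}, yielding $\le Cr\cdot 2^{n(1+\delta)}h(X)|u|_{0;hf_1(t)}$ with sum $\sim rR^{-1-\delta}h(X)|u|_{0;hf_1(t)}$; since $r<R$, this is bounded by $r^\a R^{-\a}(1+\mu_1 t)^{-\delta/2}h(X)|u|_{0;hf_1(t)}$. Using $f_1\le C(1+\mu_1 t)^{-\delta/2}$ in the first two contributions and summing the three ranges delivers the estimate $|R_iR_j u(X)-R_iR_j u(X')|\le Cr^\a R^{-\a}(1+\mu_1 t)^{-\delta/2}(h(X)+h(X'))|u|_{0,\a;hf_1(t),R}$, which is precisely the needed bound for $R^\a[R_iR_j u]_{\a;h}$.

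The main obstacle lies in the low-frequency regime, where one must integrate the product weight $hf_1$ over balls of radius much larger than $R$ and the integral must decay like $2^{-n(d-\delta)}$ rather than scale as $2^{-nd}$. This refined decay is a genuine two-dimensional statement reflecting the transverse-direction decay of $\phi_0,\phi_1$ chosen in \eqref{eq:weight}, is encapsulated in Lemma \ref{lem:weight2}, and is exactly what produces the sharp $(1+\mu_1 t)^{-\delta/2}$ factor on the right-hand side and explains why the weight on the left is $h$ alone rather than $hf_1$.
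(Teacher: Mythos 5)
Your proof is correct and follows essentially the same route as the paper: the dyadic decomposition $R_iR_j=-\frac{\delta_{ij}}{d}+\sum_n R_{ij}^n$, the cancellation $\int\varphi_n=0$ for the small scales, and Lemma \ref{lem:weight2} to gain the factor $2^{n\delta}$ on the large scales, with the crossover at $2^{-n}\sim(1+\mu_1t)^{1/2}$ producing the $(1+\mu_1t)^{-\delta/2}$ decay. The only difference is in the H\"older seminorm, where the paper takes a shortcut — it proves the unweighted-in-time bound $[R_iR_ju]_{\a;h}\le C[u]_{\a;h}$ and recovers the decay from $[u]_{\a;h}\le C(1+\mu_1t)^{-(\a+\delta)/2}|u|_{0,\a;hf_1(t),(1+\mu_1t)^{1/2}}$ — whereas your three-range splitting reproves the same estimate by hand; both are valid.
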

\begin{proof}
Notice that
 $$\int_{R^d}\varphi_n(X)dX=0,\ \text{supp}\varphi_n\subset B(0,2^{1-n})\setminus B(0,2^{-1-n}),\ \ \ |\nabla^{l}\varphi_n|\leq C2^{n(d+l)},\ l=0,1,2,$$
we deduce from Lemma \ref{lem:weight2} that
\begin{align*}
|R_{ij}^n(u)(X)|\leq& \int_{\R^d}|\varphi_n(X-Y)|h(Y)f_1(t,Y)dY |u|_{0;hf_1(t)}\\ \leq& C2^{nd}\int_{B(X,2^{1-n})}h(Y)f_1(t,Y)dY |u|_{0;hf_1(t)}\\
\leq& C2^{n\delta}h(X) |u|_{0;hf_1(t)},
\end{align*}

For $X\in \R^d,$ we have
\beno
R_{ij}^n(u)(X)=-\int_{\R^d}\varphi_n(X-Y)(u(Y)-u(X))dY,
\eeno
which along with \eqref{f02} gives
\begin{align*}
 |R_{ij}^n(u)(X)|&\leq \int_{\R^d}|\varphi_n(X-Y)|(h(X)+h(Y))|X-Y|^{\a}dY [u]_{\a;h}\\
 &\leq C2^{n(d-\a)}\int_{B(X,2^{1-n})}(h(X)+h(Y))dY [u]_{\a;h}\\
 &\leq C2^{-n\a}h(X) [u]_{\a;h}.
\end{align*}
By \eqref{f1}, we have
\beno
[u]_{\a;h}\leq C(1+\mu_1 t)^{-\frac{\delta}{2}}[u]_{\a;hf_1(t)}\leq C(1+\mu_1 t)^{-\frac{\a+\delta}{2}}|u|_{0,\a;hf_1(t),(1+\mu_1 t)^{1/2}}.
\eeno
Thus, we can conclude
\begin{align*}
|R_{i}R_{j}(u)(X)|\leq&\sum_{n=-\infty}^{\infty}|R_{ij}^n(u)(X)|\\
\leq& \sum_{n=-\infty}^{\infty}C\min(2^{n\delta},2^{-n\a}(1+\mu_1 t)^{-\frac{\a+\delta}{2}})h(X) |u|_{0,\a;hf_1(t),(1+\mu_1 t)^{1/2}}\\
\leq& C(1+\mu_1 t)^{-\frac{\delta}{2}}h(X) |u|_{0,\a;hf_1(t),(1+\mu_1 t)^{1/2}}.
\end{align*}

For any $X,X'\in \R^d,\ |X-X'|\leq 2^{-n},$
\begin{align*}
|R_{ij}^n(u)(X)-R_{ij}^n(u)(X')|\leq& \int_{\R^d}|\varphi_n(X-Y)-\varphi_n(X'-Y)|(h(X)+h(Y))|X-Y|^{\a}dY [u]_{\a;h}\\
\leq& C2^{n(d+1-\a)}|X-X'|\int_{B(X,2^{1-n})}(h(X)+h(Y))dY [u]_{\a;h}\\
\leq& C2^{n(1-\a)}|X-X'|h(X) [u]_{\a;h},
\end{align*}
which gives for any $X,X'\in \R^d,$
\beno |R_{ij}^n(u)(X)-R_{ij}^n(u)(X')|\leq C2^{-n\a}\min(1,2^n|X-X'|)(h(X)+h(X')) [u]_{\a;h}.
\eeno
Then we have
\begin{align*}
|R_{i}R_{j}(u)(X)-R_{i}R_{j}(u)(X')|\leq&\sum_{n=-\infty}^{\infty}|R_{ij}^n(u)(X)-R_{ij}^n(u)(X')|\\
\leq& \sum_{n=-\infty}^{\infty}C2^{-n\a}\min(1,2^n|X-X'|)(h(X)+h(X')) [u]_{\a;h}\\ \leq& C|X-X'|^{\a}(h(X)+h(X')) [u]_{\a;h},
\end{align*}
which implies $[R_{i}R_{j}u]_{\a;h}\leq C [u]_{\a;h}$. Thus,
\begin{align*}
|R_iR_j(u)|_{0,\a;h,(1+\mu_1 t)^{1/2}}=&|R_iR_j(u)|_{0;h}+(1+\mu_1 t)^{\a/2}[R_{i}R_{j}u]_{\a;h}\\ \leq&  C(1+\mu_1 t)^{-\frac{\delta}{2}} |u|_{0,\a;hf_1(t),(1+\mu_1 t)^{1/2}}+(1+\mu_1 t)^{\a/2}[u]_{\a;h}\\ \leq&  C(1+\mu_1 t)^{-\frac{\delta}{2}} |u|_{0,\a;hf_1(t),(1+\mu_1 t)^{1/2}} ,
\end{align*}
which gives our result.
\end{proof}

\begin{lemma}\label{Riesz2}
For $l=0,1,$ it holds that
\beno
|\nabla^l[u,R_{ij}^n]\partial_kw(X)|\leq C2^{n(l-\a)}|\nabla u|_{0;B(X,2^{1-n})} [w]_{\a}.
\eeno
\end{lemma}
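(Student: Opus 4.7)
The plan is to exploit the convolution structure of $R_{ij}^n$ together with two forms of cancellation, namely $\int \varphi_n\,dY = 0$ and $\int \partial_k \varphi_n\, dY = 0$ (both from compact support), in order to re-express the commutator so that $w$ appears only through differences $w(Y)-w(X)$ and $u$ appears only through $\nabla u$ evaluated in the annular support of $\varphi_n$. The support condition $\mathrm{supp}\,\varphi_n \subset B(0,2^{1-n})\setminus B(0,2^{-1-n})$ is what forces the values of $\nabla u$ to stay inside $B(X,2^{1-n})$, explaining the localized norm on the right-hand side.

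First I would write
\[
[u, R_{ij}^n]\partial_k w(X) = \int_{\R^d}\varphi_n(X-Y)\bigl(u(Y)-u(X)\bigr)\partial_k w(Y)\,dY,
\]
and then integrate by parts in $Y_k$, using $\int \partial_{Y_k}[\varphi_n(X-Y)(u(Y)-u(X))]\,dY = 0$ to freely subtract $w(X)$ from $w(Y)$:
\[
[u, R_{ij}^n]\partial_k w(X) = \int \partial_k\varphi_n(X-Y)\bigl(u(Y)-u(X)\bigr)\bigl(w(Y)-w(X)\bigr)\,dY - \int \varphi_n(X-Y)\partial_k u(Y)\bigl(w(Y)-w(X)\bigr)\,dY.
\]
On the annulus of width $\sim 2^{-n}$ I would then use $|u(Y)-u(X)|\le 2^{1-n}|\nabla u|_{0;B(X,2^{1-n})}$, $|\partial_k u(Y)|\le |\nabla u|_{0;B(X,2^{1-n})}$, $|w(Y)-w(X)|\le [w]_\alpha 2^{(1-n)\alpha}$, together with the pointwise bounds $|\varphi_n|\le C2^{nd}$, $|\partial_k\varphi_n|\le C2^{n(d+1)}$, and the volume factor $2^{-nd}$. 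The dimensional count $2^{n(d+1)}\cdot 2^{-n}\cdot 2^{-n\alpha}\cdot 2^{-nd}=2^{-n\alpha}$ (and identically for the second term) yields the $l=0$ bound.

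For $l=1$ I would differentiate in $X$ before integrating by parts, obtaining
\[
\nabla_X [u,R_{ij}^n]\partial_k w(X) = \int \nabla\varphi_n(X-Y)\bigl(u(Y)-u(X)\bigr)\partial_k w(Y)\,dY - \nabla u(X)\int \varphi_n(X-Y)\partial_k w(Y)\,dY.
\]
The second integral I would process via a single IBP in $Y_k$ (together with $\int\partial_{Y_k}\varphi_n\,dY = 0$) to become $\nabla u(X)\int\partial_k\varphi_n(X-Y)(w(Y)-w(X))\,dY$, whose size is $C2^{n(1-\alpha)}|\nabla u|_{0;B(X,2^{1-n})}[w]_\alpha$. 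To the first integral I would apply exactly the $l=0$ procedure (IBP in $Y_k$ and subtract $w(X)$), producing two integrals identical in structure to the $l=0$ ones but with one extra $X$-derivative on $\varphi_n$; this costs one extra factor of $2^n$ in the pointwise bound and reproduces the same $C2^{n(1-\alpha)}|\nabla u|_{0;B(X,2^{1-n})}[w]_\alpha$ estimate.

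The only real obstacle is bookkeeping: at each stage the integrations by parts must be carried out so that $w$ is consumed through its Hölder difference and $u$ through $\nabla u$ localized to $B(X,2^{1-n})$, never through an unlocalized difference $u(Y)-u(X)$ with $u$ uncontrolled. Once the calculation is organized so all three terms at $l=1$ (and both terms at $l=0$) are expressed with $(w(Y)-w(X))$ paired against a derivative of $\varphi_n$, and $\nabla u$ always evaluated inside the annular support, the estimates reduce to scalar counting $2^{n(d+l)}\cdot 2^{-n\alpha}\cdot 2^{-nd}=2^{n(l-\alpha)}$, giving the asserted bound uniformly in $n\in\Z$.
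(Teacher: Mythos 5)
Your proof is correct and follows essentially the same route as the paper: the same integration by parts in $Y_k$ producing the two (resp.\ three) terms of the paper's identities, the same use of the vanishing of $\int\partial_{Y_k}\bigl[\varphi_n(X-Y)(u(Y)-u(X))\bigr]dY$ to replace $w(Y)$ by $w(Y)-w(X)$, and the same scaling count on the annular support. The only difference is cosmetic — you build the subtraction of $w(X)$ in from the start, while the paper first bounds everything by $|w|_{0;B(X,2^{1-n})}$ and performs the replacement $w\mapsto w-w(X)$ at the very end.
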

\begin{proof}
Thanks to
\ben\label{f6}
&&[u,R_{ij}^n]\partial_kw(X)=\int_{\R^d}\varphi_n(X-Y)(u(Y)-u(X))\partial_kw(Y) dY \nonumber \\
&&=\int_{\R^d}\partial_k\varphi_n(X-Y)(u(Y)-u(X))w(Y)dY-\int_{\R^d}\varphi_n(X-Y)\partial_ku(Y)w(Y) dY,\een
we deduce that
\begin{align*}
|[u,R_{ij}^n]\partial_kw(X)|\leq& \int_{\R^d}|\partial_k\varphi_n(X-Y)||X-Y|dY |\nabla u|_{0;B(X,2^{1-n})}|w|_{0;B(X,2^{1-n})}\\&+\int_{\R^d}|\varphi_n(X-Y)| dY|\nabla u|_{0;B(X,2^{1-n})}|w|_{0;B(X,2^{1-n})}\\
\leq& C|\nabla u|_{0;B(X,2^{1-n})}|w|_{0;B(X,2^{1-n})},
\end{align*}

Thanks to
\ben\label{f7}
&&\nabla[u,R_{ij}^n]\partial_kw(X) =\int_{\R^d}\nabla\partial_k\varphi_n(X-Y)(u(Y)-u(X))w(Y)dY\nonumber\\&&\quad-\nabla u(X)\int_{\R^d}\partial_k\varphi_n(X-Y)w(Y)dY-\int_{\R^d}\nabla\varphi_n(X-Y)\partial_ku(Y)w(Y) dY,\een
we can similarly deduce that
\beno |\nabla[u,R_{ij}^n]\partial_kw(X)|\leq  C2^n|\nabla u|_{0;B(X,2^{1-n})}|w|_{0;B(X,2^{1-n})}.
\eeno

As $[u,R_{ij}^n]\partial_kw=[u,R_{ij}^n]\partial_k(w-w(X)), $ we have for $l=0,1,$
\begin{align*}
|\nabla^l[u,R_{ij}^n]\partial_kw(X)|\leq& C2^{nl}|\nabla u|_{0;B(X,2^{1-n})}|w-w(X)|_{0;B(X,2^{1-n})}\\\leq& C2^{n(l-\a)}|\nabla u|_{0;B(X,2^{1-n})} [w]_{\a}.
\end{align*}
This completes the proof.
\end{proof}

\begin{lemma}\label{Riesz3}
If  $|u|_{1,\a;h,(1+\mu_1 t)^{1/2}}=|w|_{1,\a;f_1(t),(\mu_1 t)^{1/2}}=1$ for
$h=1,f_1(t),f(t),f_{\pm}(t)$, then we have
\beno
&&|[u,R_{ij}^n]\partial_kw(X)|\leq C h(X)\min(2^{n\delta}(1+\mu_1 t)^{-\frac{1}{2}},2^{-n\a}(1+\mu_1 t)^{-\frac{1+\delta+\a}{2}}),\\
&&|\partial_l[u,R_{ij}^n]\partial_kw(X)|\leq C h(X)(1+\mu_1 t)^{-\frac{1+\delta+\a}{2}}\min(2^{n(1-\a)},2^{-n\a}(\mu_1 t)^{-\frac{1}{2}}).
\eeno
\end{lemma}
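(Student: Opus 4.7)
\textbf{Plan for Lemma \ref{Riesz3}.}
The strategy is to start from the representations \eqref{f6} (for $[u,R^n_{ij}]\partial_k w$) and \eqref{f7} (for its gradient), which express each quantity as a sum of two or three integrals of the form $\int \psi_n(X-Y)\,\Xi(X,Y)\, w(Y)\,dY$, where $\psi_n\in\{\varphi_n,\partial_k\varphi_n,\nabla\varphi_n,\nabla\partial_k\varphi_n\}$ is supported in the dyadic annulus $|X-Y|\sim 2^{-n}$ with $|\nabla^l\psi_n|\le C2^{n(d+l)}$, and $\Xi(X,Y)$ is either a first derivative such as $\partial_k u(Y)$ or a difference $u(Y)-u(X)$ carrying Hölder cancellation. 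For each piece, I produce two competing bounds whose minimum gives the asserted estimate.

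The first (``small-scale'') bound uses only the gradient of $u$: because of the normalization $|u|_{1,\a;h,(1+\mu_1t)^{1/2}}=1$, we have $|\nabla u|_{0;h}\le (1+\mu_1 t)^{-1/2}$, and when we meet the difference $u(Y)-u(X)$ we estimate it by $|X-Y|\cdot|\nabla u|_{0;h}\cdot\sup_{B(X,2^{1-n})}h\le C2^{-n}(1+\mu_1t)^{-1/2}h(X)$ using the local weight comparability on the scale $(1+\mu_1t)^{1/2}$. The $w$-factor is taken in the form $|w|_{0;f_1}\le 1$ (respectively $|\nabla w|_{0;f_1}\le C(\mu_1t)^{-1/2}$ for the gradient inequality). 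The remaining integral against $\psi_n$ and $hf_1$ is controlled by Lemma \ref{lem:weight2} with $R=2^{1-n}$, producing $h(X)\min(2^{n\delta},(1+\mu_1t)^{-\delta/2})$; choosing the first term of the $\min$ yields the $2^{n\delta}$ option of the first inequality and, after losing a $2^n$ from an extra derivative on $\psi_n$, the $2^{n(1-\a)}$ option of the second inequality.

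The second (``Hölder/cancellation'') bound exploits $\int\varphi_n=0$ to replace $u(Y)$ by $u(Y)-u(X)$ and then invokes the interpolation $[u]_{\a;h}\le CR^{-\a}=C(1+\mu_1t)^{-\a/2}$, proved by combining $|u|_{0;h}\le 1$ for $|X-Y|\ge R$ with the gradient bound $|\nabla u|_{0;h}\le R^{-1}$ for $|X-Y|<R$. This gives the factor $|X-Y|^{\a}\le C2^{-n\a}$ inside the integral; pairing this with the $(1+\mu_1t)^{-\delta/2}$ option from Lemma \ref{lem:weight2} and an additional $(1+\mu_1t)^{-1/2}$ factor obtained either from $|\nabla w|_{0;f_1}$ (gradient inequality) or from bounding $f_1(X)\le C(1+\mu_1t)^{-\delta/2}$ together with another interpolation step on $u$ (nongradient inequality) delivers the desired $2^{-n\a}(1+\mu_1t)^{-(1+\delta+\a)/2}$ or $2^{-n\a}(\mu_1t)^{-1/2}(1+\mu_1t)^{-(1+\delta+\a)/2}$ options. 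The final lemma statement is the minimum of the two bounds.

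The main obstacle is bookkeeping the weight arithmetic: the problem forces us to track two different scales, $(1+\mu_1t)^{1/2}$ for $u$ and $(\mu_1t)^{1/2}$ for $w$, and to make sure Lemma \ref{lem:weight2} applies uniformly for all allowed weights $h\in\{1,f_1(t),f(t),f_\pm(t)\}$, including the regime $2^{-n}>\sqrt{1+\mu_1t}$ where the weight loses its local comparability; it is exactly in this regime that the $(1+\mu_1t)^{-\delta/2}$ option of Lemma \ref{lem:weight2} compensates and allows the $2^{-n\a}$ Hölder bound to dominate, producing the combined estimate via the $\min$.
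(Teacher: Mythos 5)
Your overall strategy coincides with the paper's: decompose into the dyadic pieces $R_{ij}^n$, use the integration-by-parts identities \eqref{f6}--\eqref{f7}, and for each piece produce a direct bound and a H\"older-cancellation bound whose minimum is the claimed estimate, with Lemma \ref{lem:weight2} absorbing the weights and the interpolation $[w]_{\a}\leq C(1+\mu_1 t)^{-\frac{\delta+\a}{2}}$, $[\nabla w]_\a\le C(1+\mu_1t)^{-\frac{\delta+\a}{2}}(\mu_1t)^{-\frac12}$ supplying the extra decay. However, there is a concrete gap in how you propose to obtain the $2^{n\delta}(1+\mu_1 t)^{-1/2}$ branch. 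You estimate $|u(Y)-u(X)|\leq |X-Y|\,|\nabla u|_{0;h}\,\sup_{B(X,2^{1-n})}h\leq C2^{-n}(1+\mu_1 t)^{-1/2}h(X)$ by invoking local comparability of $h$ at scale $(1+\mu_1 t)^{1/2}$. That comparability only holds for $|X-Y|\lesssim \sqrt{1+\mu_1 t}$, whereas the $2^{n\delta}$ branch is precisely the one that must be established in the opposite regime $2^{-n}\geq\sqrt{1+\mu_1 t}$ (it is the smaller of the two there, the branches crossing exactly at $2^{-n}=\sqrt{1+\mu_1 t}$). For $h=f(t)$ or $f_\pm(t)$ the supremum of $h$ over a ball of radius $2^{1-n}\gg\sqrt{1+\mu_1 t}$ is not comparable to $h(X)$, so your inequality fails exactly where it is needed; and without that branch the sum over $n\to-\infty$ in $[u,R_iR_j]\partial_kw=\sum_n[u,R_{ij}^n]\partial_kw$, which is what Proposition \ref{prop:Riesz} requires, does not converge.

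The paper's repair is to use only the zeroth-order information on $u$ in that regime: bound $|u(Y)-u(X)|\leq|u(X)|+|u(Y)|\leq h(X)+h(Y)$, keep $h(Y)$ inside the integral so that Lemma \ref{lem:weight2} applied to the product $hf_1$ gives $C2^{n(1+\delta)}h(X)$ for the term containing $\partial_k\varphi_n$, and then convert the extra factor $2^{n}$ into $(1+\mu_1 t)^{-1/2}$ using $2^{n}\leq(1+\mu_1 t)^{-1/2}$; the term containing $\partial_k u$ is handled as you describe, via $|\partial_ku(Y)|\le h(Y)(1+\mu_1t)^{-1/2}$ and Lemma \ref{lem:weight2}. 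The same repair is needed for the large-scale branch $2^{n(1-\a)}$ of the gradient inequality, and in the small-scale part of the gradient estimate you will also need the splitting $\partial_l[u,R_{ij}^n]\partial_kw=[u,\partial_lR_{ij}^n]\partial_kw+\partial_lu\,R_{ij}^n\partial_kw$ together with a double-difference formula pairing $u(Y)-u(X)$ with $\partial_kw(Y)-\partial_kw(X)$ to reach the $2^{-n\a}(\mu_1 t)^{-1/2}(1+\mu_1t)^{-\frac{1+\delta+\a}{2}}$ branch; your sketch is too loose to confirm this bookkeeping. Finally, your regime labels are inverted (the $2^{n\delta}$ bound is the large-scale one, the $2^{-n\a}$ bound the small-scale one), though this is terminology rather than mathematics.
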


\begin{proof}
As $|u|_{1,\a;h,(1+\mu_1 t)^{1/2}}=|w|_{1,\a;f_1(t),(\mu_1 t)^{1/2}}=1,$ we have
$$|u(X)|\leq h(X),\ |\nabla u(X)|\leq h(X)(1+\mu_1 t)^{-\frac{1}{2}},\ |w(X)|\leq f_1(t,X).$$
Using $f_1(t,X)\leq C(1+\mu_1 t)^{-\frac{\delta}{2}},$ we also have
\beno
&&|w|_0\leq C(1+\mu_1 t)^{-\frac{\delta}{2}},\quad [w]_{\a}\leq C(1+\mu_1 t)^{-\frac{\delta}{2}},\\
&&|\nabla w|_0\leq C(1+\mu_1 t)^{-\frac{\delta}{2}}(\mu_1 t)^{-\frac{1}{2}},\\&&[\nabla w]_{\a}\leq C(1+\mu_1 t)^{-\frac{\delta}{2}}\min((\mu_1 t)^{-\frac{1}{2}},(\mu_1 t)^{-\frac{1+\a}{2}})\leq C(1+\mu_1 t)^{-\frac{\delta+\a}{2}}(\mu_1 t)^{-\frac{1}{2}},
\eeno
and
$$[w]_{\a}\leq C|w|_0^{1-\a}|\nabla w|_0^{\a}\leq C(1+\mu_1 t)^{-\frac{\delta}{2}}(\mu_1 t)^{-\frac{\a}{2}}.$$
Therefore
$$[w]_{\a}\leq C(1+\mu_1 t)^{-\frac{\delta}{2}}\min(1,(\mu_1 t)^{-\frac{\a}{2}})\leq C(1+\mu_1 t)^{-\frac{\delta+\a}{2}}.$$
Then we deduce from \eqref{f6} and Lemma \ref{lem:weight2} that for $2^{-n}\geq \sqrt{1+\mu_1 t},$
\begin{align*}
|[u,R_{ij}^n]\partial_kw(X)|\leq& \int_{\R^d}|\partial_k\varphi_n(X-Y)|(h(X)+h(Y))f_1(t,Y)dY \\&+(1+\mu_1 t)^{-\frac{1}{2}}\int_{\R^d}|\varphi_n(X-Y)|h(Y)f_1(t,Y) dY \\ \leq& C2^{n(d+1)}\int_{B(X,2^{1-n})}(h(X)+h(Y))f_1(t,Y)dY \\&+C(1+\mu_1 t)^{-\frac{1}{2}}2^{nd}\int_{B(X,2^{1-n})}h(Y)f_1(t,Y) dY \\ \leq& C2^{n(1+\delta)}h(X) +C(1+\mu_1 t)^{-\frac{1}{2}}2^{n\delta}h(X)\\
\leq& C(1+\mu_1 t)^{-\frac{1}{2}}2^{n\delta}h(X).
\end{align*}

For $2^{-n}\leq \sqrt{1+\mu_1 t},$ we have
 $$|\nabla u|_{0;B(X,2^{1-n})}\leq |\nabla u|_{0;h, B(X,2^{1-n})}|h|_{0;B(X,2^{1-n})}\leq Ch(X)(1+\mu_1 t)^{-\frac{1}{2}},$$
 where we used the fact that $h$ satisfies \eqref{ass:W1} with $R=\sqrt{1+\mu_1 t}$.
Similarly, we have
 $$
 [\nabla u]_{\a;B(X,2^{1-n})}\leq [\nabla u]_{\a;h, B(X,2^{1-n})}|h|_{0;B(X,2^{1-n})}\leq Ch(X)(1+\mu_1 t)^{-\frac{1+\a}{2}}.
 $$

Then we get by Lemma \ref{Riesz2} that
\begin{align*}
|[u,R_{ij}^n]\partial_kw(X)|\leq& C2^{n\a}h(X)|\nabla u|_{0;B(X,2^{1-n})} [w]_{\a}\\\leq& C2^{n\a}h(X) (1+\mu_1 t)^{-\frac{1+\delta+\a}{2}},
\end{align*}
which gives the first inequality of the lemma.

Similarly, by \eqref{f7}, Lemma \ref{lem:weight2} and Lemma \ref{Riesz2},
we can deduce
\begin{align*}
|\partial_l[u,R_{ij}^n]\partial_kw(X)|\leq& C h(X)2^n\min(2^{n\delta}(1+\mu_1 t)^{-\frac{1}{2}},2^{-n\a}(1+\mu_1 t)^{-\frac{1+\delta+\a}{2}})\\
\leq& C h(X)2^{n(1-\a)}(1+\mu_1 t)^{-\frac{1+\delta+\a}{2}}.
\end{align*}

On the other hand,
\begin{align*}
\partial_l[u,R_{ij}^n]\partial_kw(X)=&\int_{\R^d}\partial_l\varphi_n(X-Y)(u(Y)-u(X))\partial_kw(Y) dY\\&-\partial_lu(X)\int_{\R^d}\varphi_n(X-Y)\partial_kw(Y) dY\\
\tre& [u,\partial_lR_{ij}^n]\partial_kw(X)+\partial_lu(X)R_{ij}^n\partial_kw(X).
\end{align*}

From the proof of Lemma \ref{Riesz1}, we can see that
$$|R_{ij}^n\partial_kw(X)|\leq C2^{-n\a}[\partial_kw]_{\a}\leq C2^{-n\a}(1+\mu_1 t)^{-\frac{\delta+\a}{2}}(\mu_1 t)^{-\frac{1}{2}}.$$
By \eqref{f02}, we deduce that for $2^{-n}\geq \sqrt{1+\mu_1 t},$
\begin{align*}
|[u,\partial_lR_{ij}^n]\partial_kw(X)|\leq&\int_{\R^d}|\partial_l\varphi_n(X-Y)|(|u(Y)|+|u(X)|)|\partial_kw(Y) | dY\\ \leq& C2^{n(d+1)}(1+\mu_1 t)^{-\frac{\delta}{2}}(\mu_1 t)^{-\frac{1}{2}}\int_{B(X,2^{1-n})}(h(Y)+h(X)) dY\\ \leq& C2^{n}(1+\mu_1 t)^{-\frac{\delta}{2}}(\mu_1 t)^{-\frac{1}{2}}h(X)\\
\leq& C2^{-n\a}(1+\mu_1 t)^{-\frac{1+\delta+\a}{2}}(\mu_1 t)^{-\frac{1}{2}}h(X).
\end{align*}
For $2^{-n}\leq \sqrt{1+\mu_1 t},$ using the formula
\begin{align*}
[u,\partial_lR_{ij}^n]\partial_kw(X)=&\int_{\R^d}\partial_l\varphi_n(X-Y)(u(Y)-u(X))(\partial_kw(Y)-\partial_kw(X)) dY\\&+\partial_kw(X)\int_{\R^d}\varphi_n(X-Y)(\partial_lu(Y)-\partial_lu(X)) dY,
\end{align*}
we deduce that
\begin{align*}
 |[u,\partial_lR_{ij}^n]\partial_kw(X)|\leq&\int_{\R^d}|\partial_l\varphi_n(X-Y)||X-Y|^{1+\a} dY|\nabla u|_{0;B(X,2^{1-n})}[\partial_kw]_{\a}\\&+|\partial_kw|_{0}\int_{\R^d}\varphi_n(X-Y)|X-Y|^{\a} dY [\nabla u]_{\a;B(X,2^{1-n})}\\ \leq& C2^{-n\a}h(X)(1+\mu_1 t)^{-\frac{1}{2}}(1+\mu_1 t)^{-\frac{\delta+\a}{2}}(\mu_1 t)^{-\frac{1}{2}}\\&+C(1+\mu_1 t)^{-\frac{\delta}{2}}(\mu_1 t)^{-\frac{1}{2}}2^{-n\a}h(X)(1+\mu_1 t)^{-\frac{1+\a}{2}}\\ \leq& C2^{-n\a}h(X)(1+\mu_1 t)^{-\frac{1+\delta+\a}{2}}(\mu_1 t)^{-\frac{1}{2}}.
 \end{align*}
This shows that
 \begin{align*}
 |\partial_l[u,R_{ij}^n]\partial_kw(X)|\leq& [u,\partial_lR_{ij}^n]\partial_kw(X)|+|\partial_lu(X)R_{ij}^n\partial_kw(X)|\\ \leq& C2^{-n\a}h(X)(1+\mu_1 t)^{-\frac{1+\delta+\a}{2}}(\mu_1 t)^{-\frac{1}{2}}\\&+C h(X)(1+\mu_1 t)^{-\frac{1}{2}}2^{-n\a}(1+\mu_1 t)^{-\frac{\delta+\a}{2}}(\mu_1 t)^{-\frac{1}{2}}\\ \leq& C2^{-n\a}h(X)(1+\mu_1 t)^{-\frac{1+\delta+\a}{2}}(\mu_1 t)^{-\frac{1}{2}},
 \end{align*}
 which gives  the second inequality of the lemma.
 \end{proof}

 Using the formula
 \begin{align*}
 \partial_m[u,\partial_lR_{ij}^n]\partial_kw(X)=&\int_{\R^d}\partial_m\partial_l\varphi_n(X-Y)(u(Y)-u(X))\partial_kw(Y) dY\\&-\partial_mu(X)\int_{\R^d}\partial_l\varphi_n(X-Y)\partial_kw(Y) dY,
\end{align*}
we can also deduce that
\ben\label{f9} |\partial_m[u,\partial_lR_{ij}^n]\partial_kw(X)|\leq C2^{n(1-\a)}h(X)(1+\mu_1 t)^{-\frac{1+\delta+\a}{2}}(\mu_1 t)^{-\frac{1}{2}}.
\een

Now we are in position to prove Proposition \ref{prop:Riesz}.

\begin{proof}
We get by Lemma \ref{Riesz1} with $h=f_1(t)$ that
\begin{align*}
|R_iR_j(uw)|_{0,\a;f_1(t)}\leq& C(1+\mu_1 t)^{-\frac{\delta}{2}}|uw|_{0,\a;f_1(t)^2,(1+\mu_1 t)^{1/2}}\\ \leq& C(1+\mu_1 t)^{-\frac{\delta}{2}}(1+(\mu_1 t)^{-\frac{\a}{2}})|uw|_{0,\a;f_1(t)^2,(\mu_1 t)^{1/2}}\\ \leq& C(1+\mu_1 t)^{-\frac{\delta}{2}}(1+(\mu_1 t)^{-\frac{\a}{2}})|u|_{0,\a;f_1(t),(\mu_1 t)^{1/2}}|w|_{0,\a;f_1(t),(\mu_1 t)^{1/2}},
\end{align*}
which gives the second inequality of the proposition.

For the first inequality, without lose of generality, we can assume
$$|u|_{1,\a;h,(1+\mu_1 t)^{1/2}}=|w|_{1,\a;f_1(t),(\mu_1 t)^{1/2}}=1,$$
here $h=f_\pm(t)$.

First of all, by Lemma \ref{Riesz3}, we have
\begin{align*}
\big|[u,R_iR_j]\partial_kw(X)\big|\leq& \sum_{n=-\infty}^{\infty}|[u,R_{ij}^n]\partial_kw(X)|\\ \leq& C\sum_{n=-\infty}^{\infty}h(X)\min(2^{n\delta}(1+\mu_1 t)^{-\frac{1}{2}},2^{-n\a}(1+\mu_1 t)^{-\frac{1+\delta+\a}{2}})\\
\leq& C h(X)(1+\mu_1 t)^{-\frac{1+\delta}{2}},
\end{align*}
and
\begin{align*}
\big|\partial_l[u,R_iR_j]\partial_kw(X)\big|\leq& \sum_{n=-\infty}^{\infty}|\partial_l[u,R_{ij}^n]\partial_kw(X)|\\ \leq& C\sum_{n=-\infty}^{\infty}h(X)(1+\mu_1 t)^{-\frac{1+\delta+\a}{2}}\min(2^{n(1-\a)},2^{-n\a}(\mu_1 t)^{-\frac{1}{2}})\\ \leq& C h(X)(1+\mu_1 t)^{-\frac{1+\delta+\a}{2}}(\mu_1 t)^{-\frac{1-\a}{2}}.
\end{align*}

Now we consider $X,Y\in\R^d,\ |X-Y|\leq \sqrt{1+\mu_1 t}$. It follows from Lemma \ref{Riesz3} that
$$\big|[u,R_{ij}^n]\partial_kw(X)-[u,R_{ij}^n]\partial_kw(Y)\big|\leq Ch(X)2^{-n\a}(1+\mu_1 t)^{-\frac{1+\delta+\a}{2}}\min(1,2^n|X-Y|),$$
 here we used the fact that $h$ satisfies \eqref{ass:W1} with $R=\sqrt{1+\mu_1 t}$. Therefore,
 \begin{align*}
\big|[u,R_iR_j]\partial_kw(X)-[u,R_iR_j]\partial_kw(Y)\big|\leq& \sum_{n=-\infty}^{\infty}|[u,R_{ij}^n]\partial_kw(X)-[u,R_{ij}^n]\partial_kw(Y)|\\ \leq& C\sum_{n=-\infty}^{\infty}h(X)2^{-n\a}(1+\mu_1 t)^{-\frac{1+\delta+\a}{2}}\min(1,2^n|X-Y|)\\\leq& C h(X)(1+\mu_1 t)^{-\frac{1+\delta+\a}{2}}|X-Y|^{\a}.
\end{align*}

We write
$$\partial_l[u,R_iR_j]\partial_kw=[u,\partial_lR_iR_j]\partial_kw+\partial_lu\ R_iR_j\partial_kw,$$
where
$$[u,\partial_lR_iR_j]\partial_kw=\sum_{n=-\infty}^{\infty}[u,\partial_lR_{ij}^n]\partial_kw.$$
We get by Lemma \ref{Riesz3} and \eqref{f9}  that
 $$|[u,\partial_lR_{ij}^n]\partial_kw(X)-[u,\partial_lR_{ij}^n]\partial_kw(Y)|\leq Ch(X)2^{-n\a}(1+\mu_1 t)^{-\frac{1+\delta+\a}{2}}(\mu_1 t)^{-\frac{1}{2}}\min(1,2^n|X-Y|),$$
 which gives
 \begin{align*}
 \big|[u,\partial_lR_iR_j]\partial_kw(X)-[u,\partial_lR_iR_j]\partial_kw(Y)\big|\leq& \sum_{n=-\infty}^{\infty}|[u,\partial_lR_{ij}^n]\partial_kw(X)-[u,\partial_lR_{ij}^n]\partial_kw(Y)|\\ \leq& C\sum_{n=-\infty}^{\infty}h(X)2^{-n\a}(1+\mu_1 t)^{-\frac{1+\delta+\a}{2}}(\mu_1 t)^{-\frac{1}{2}}\min(1,2^n|X-Y|)\\ \leq& C h(X)(1+\mu_1 t)^{-\frac{1+\delta+\a}{2}}(\mu_1 t)^{-\frac{1}{2}}|X-Y|^{\a},
\end{align*}
 Thanks to
\begin{align*}
|\partial_kw|_{0,\a;f_1(t),(1+\mu_1 t)^{1/2}}\leq& (1+(\mu_1 t)^{-\frac{\a}{2}})|\partial_kw|_{0,\a;f_1(t),(\mu_1 t)^{1/2}}\\ \leq& (1+(\mu_1 t)^{-\frac{\a}{2}})\min((\mu_1 t)^{-\frac{1-\a}{2}},(\mu_1 t)^{-\frac{1}{2}})\leq 2(\mu_1 t)^{-\frac{1}{2}},
\end{align*}
we infer from Lemma \ref{Riesz1} that
\begin{align*}
|\partial_lu\ R_iR_j\partial_kw|_{0,\a;h,(1+\mu_1 t)^{1/2}}\leq& C|\partial_lu|_{0,\a;h,(1+\mu_1 t)^{1/2}}|R_iR_j\partial_kw|_{0,\a;1,(1+\mu_1 t)^{1/2}}\\ \leq& C(1+\mu_1 t)^{-\frac{1}{2}}(1+\mu_1 t)^{-\frac{\delta}{2}}|\partial_kw|_{0,\a;h,(1+\mu_1 t)^{1/2}}\\
\leq& C(1+\mu_1 t)^{-\frac{1+\delta}{2}}(\mu_1 t)^{-\frac{1}{2}},
\end{align*}
and
\beno
[\partial_lu\ R_iR_j\partial_kw]_{\a;h}\leq C(1+\mu_1 t)^{-\frac{1+\delta+\a}{2}}(\mu_1 t)^{-\frac{1}{2}}.
\eeno
This shows that
\beno|\partial_l[u,R_iR_j]\partial_kw(X)-\partial_l[u,R_iR_j]\partial_kw(Y)|\leq C h(X)(1+\mu_1 t)^{-\frac{1+\delta+\a}{2}}(\mu_1 t)^{-\frac{1}{2}}|X-Y|^{\a}.
\eeno

For the case of  $X,Y\in\R^d,\ |X-Y|\geq \sqrt{1+\mu_1 t},$  we have
\begin{align*}
|[u,R_iR_j]\partial_kw(X)-[u,R_iR_j]\partial_kw(Y)|\leq& C (h(X)+h(Y))(1+\mu_1 t)^{-\frac{1+\delta}{2}}\\ \leq& C (h(X)+h(Y))(1+\mu_1 t)^{-\frac{1+\delta+\al}{2}}|X-Y|^{\a},
\end{align*}
and
\begin{align*}
|\partial_l[u,R_iR_j]\partial_kw(X)-\partial_l[u,R_iR_j]\partial_kw(Y)| \leq& C (h(X)+h(Y))(1+\mu_1 t)^{-\frac{1+\delta+\a}{2}}(\mu_1 t)^{-\frac{1-\a}{2}}\\ \leq& C (h(X)+h(Y))(1+\mu_1 t)^{-\frac{1+\delta+\a}{2}}(\mu_1 t)^{-\frac{1}{2}}|X-Y|^{\a}.
\end{align*}

In summary, we conclude
\begin{align*}
&|[u,R_iR_j]\partial_kw|_{1,\a;h,(\mu_1 t)^{1/2}}=|[u,R_iR_j]\partial_kw|_{0;h}+[[u,R_iR_j]\partial_kw]_{\a;h}\\&\qquad+\max((\mu_1 t)^{\frac{1-\a}{2}},(\mu_1 t)^{\frac{1}{2}})\Big(|\nabla[u,R_iR_j]\partial_kw|_{0;h}+(\mu_1 t)^{\frac{\a}{2}}[\nabla[u,R_iR_j]\partial_kw]_{\a;h}\Big)\\
&\leq C(1+\mu_1 t)^{-\frac{1+\delta}{2}}+C(1+\mu_1 t)^{-\frac{1+\delta+\a}{2}}+C\max((\mu_1 t)^{\frac{1-\a}{2}},(\mu_1 t)^{\frac{1}{2}})\times\\
&\qquad\quad \Big((1+\mu_1 t)^{-\frac{1+\delta+\a}{2}}(\mu_1 t)^{-\frac{1-\a}{2}}+(\mu_1 t)^{\frac{\a}{2}}(1+\mu_1 t)^{-\frac{1+\delta+\a}{2}}(\mu_1 t)^{-\frac{1}{2}}\Big)\leq C,\end{align*}
which gives the first inequality of the proposition.
\end{proof}

\subsection{Weighted Schauder estimate}

Let $h(X)$ be a {positive bounded} weight satisfying
\beno
 h(X)\leq C_0h(Y)\  \text{for} \ |X-Y|\leq 2R,\ R>0.
\eeno

\begin{lemma}\label{Schauder}
Let $u\in C^{2,\al}_h(\R^d)$. Then we have
\beno
|\nabla^2 u|_{0,\a;h,R}\leq C\Big(|\nabla u|_{0,\a;h}\min(R^{-1+\a},R^{-1})+|\triangle u|_{0,\a;h,R}\Big).
\eeno
Here $C$ is a constant depending only on $C_0$.
\end{lemma}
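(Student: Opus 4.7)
The plan is to localize the classical interior Schauder estimate on balls $B(X_0,R)$ of radius $R$ centered at each point $X_0$, and then use the hypothesis $h(X)\le C_0 h(Y)$ on balls of radius $2R$ to convert $L^\infty$ bounds into the weighted norms.

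First, for each fixed $X_0\in\R^d$, I would introduce the modified function $\tilde u(Y)=u(Y)-u(X_0)-\nabla u(X_0)\cdot(Y-X_0)$. Since the subtracted affine piece is harmonic, $\nabla^2\tilde u=\nabla^2 u$ and $\Delta\tilde u=\Delta u$, while $\tilde u(X_0)=0$ and $\nabla\tilde u(X_0)=0$. Applying the rescaled interior Schauder estimate to $\tilde u$ on $B(X_0,R)$ gives
\beno
\|\nabla^2 u\|_{L^\infty(B_{R/2}(X_0))}+R^{\a}[\nabla^2 u]_{\a;B_{R/2}(X_0)}\le C\Big(R^{-1}\sup_{B_R(X_0)}|\nabla u-\nabla u(X_0)|+|\Delta u|_{0;B_R(X_0)}+R^{\a}[\Delta u]_{\a;B_R(X_0)}\Big).
\eeno

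Next I would bring in the weight. Using $h(Y)\le C_0h(X_0)$ for $Y\in B(X_0,2R)$, the crucial interpolation is
\beno
|\nabla u(Y)-\nabla u(X_0)|\le Ch(X_0)\min\bigl(|\nabla u|_{0;h},\,R^{\a}[\nabla u]_{\a;h}\bigr)\quad\text{for }Y\in B(X_0,R),
\eeno
which after division by $R$ yields precisely the factor $\min(R^{-1+\a},R^{-1})|\nabla u|_{0,\a;h}$. Treating $\|\Delta u\|_{L^\infty(B_R)}$ and $[\Delta u]_{\a;B_R}$ similarly via the weight comparison, one obtains the local bound
\beno
\frac{\|\nabla^2 u\|_{L^\infty(B_{R/2}(X_0))}}{h(X_0)}+R^{\a}\frac{[\nabla^2 u]_{\a;B_{R/2}(X_0)}}{h(X_0)}\le C\Big(\min(R^{-1+\a},R^{-1})|\nabla u|_{0,\a;h}+|\Delta u|_{0,\a;h,R}\Big).
\eeno

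Finally, I would assemble the global scaled norm $|\nabla^2 u|_{0,\a;h,R}=|\nabla^2 u|_{0;h}+R^{\a}[\nabla^2 u]_{\a;h}$ from these local pieces. The sup part follows by evaluating the local estimate at $X_0$ itself and taking the supremum over $X_0$. For the Hölder seminorm, pairs $X,Y$ with $|X-Y|\le R/2$ fall inside $B_{R/2}(X)$, so the local Hölder bound with $X_0=X$ applies directly (and $h(X)\le h(X)+h(Y)$ gives the weighted form); pairs with $|X-Y|>R/2$ are controlled via $|\nabla^2 u(X)-\nabla^2 u(Y)|\le|\nabla^2 u(X)|+|\nabla^2 u(Y)|$ together with the already-established sup bound, the factor $|X-Y|^{-\a}\le (R/2)^{-\a}$ canceling the $R^{\a}$ in the definition of the scaled seminorm. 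The main delicacy is the interpolation that produces $\min(R^{-1+\a},R^{-1})$, which is what captures the correct behavior in both the small- and large-$R$ regimes; once this is handled, the rest is a standard weighted localization of the Campanato/Schauder argument.
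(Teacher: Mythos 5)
Your proposal is correct and follows essentially the same route as the paper's proof: subtract the affine part at the base point, apply the scaled interior Schauder estimate on a ball of radius comparable to $R$, use the two-sided bound $|\nabla u(Y)-\nabla u(X_0)|\leq Ch(X_0)\min\bigl(|\nabla u|_{0;h},R^{\a}[\nabla u]_{\a;h}\bigr)$ to produce the factor $\min(R^{-1+\a},R^{-1})$, and reassemble the global weighted seminorm by splitting into $|X-Y|\lesssim R$ and $|X-Y|\gtrsim R$. The only cosmetic difference is that the paper feeds $R^{-2}\,|w|_{0;B(X,2R)}$ (via $|w|_{0}\leq 2R\,|\nabla w|_{0}$, since $w(X)=0$) into the Schauder estimate, whereas you use the equivalent gradient form directly.
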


\begin{proof}
Fix $X\in \R^d$ and consider the function $w(Y)=u(Y)-u(X)-(Y-X)\cdot\nabla u(X)$. So,
\beno
\nabla^2 w=\nabla^2 u,\quad \triangle w=\triangle u,\quad |\triangle u|_{0,\a;B(X,2R),R}\leq 2C_0h(X)|\triangle u|_{0,\a;h,R},
\eeno
where
\beno
|u|_{0,\a;B(X,2R),R}\tre |u|_{0;B(X,2R)}+R^{\a}[u]_{\a;B(X,2R)}.
\eeno

As $\nabla w(Y)=\nabla u(Y)-\nabla u(X)$, we have for $|X-Y|\leq 2R,$
\beno
&&|\nabla w(Y)|=|\nabla u(Y)-\nabla u(X)|\leq (h(X)+h(Y))|X-Y|^{\a}|\nabla u|_{0,\a;h}\leq 4C_0h(X)R^{\a}|\nabla u|_{0,\a;h},\\
&&|\nabla w(Y)|\leq|\nabla u(Y)|+|\nabla u(X)|\leq (h(X)+h(Y))|\nabla u|_{0,\a;h}\leq 2C_0h(X)|\nabla u|_{0,\a;h}.
\eeno
This shows that
\beno
|\nabla w|_{0;B(X,2R)}\leq 4C_0h(X)\min(R^{\a},1)|\nabla u|_{0,\a;h},
\eeno
from which and $w(X)=0,$ we infer
$$| w|_{0;B(X,2R)}\leq 2R|\nabla w|_{0;B(X,2R)}\leq 8C_0h(X)\min(R^{1+\a},R)|\nabla u|_{0,\a;h}.$$
Then by the (scaled) Schauder estimate, we obtain
\begin{align*}
 |\nabla^2 w|_{0,\a;B(X,R),R}\leq& C\big(R^{-2}| w|_{0;B(X,2R)}+|\triangle w|_{0,\a;B(X,2R),R}\big)\\\leq&
  Ch(X)(\min(R^{-1+\a},R^{-1})|\nabla u|_{0,\a;h}+|\triangle u|_{0,\a;h,R})\tre Ch(X)A,
 \end{align*}
which in particular shows
\beno
 |\nabla^2 u(X)|=|\nabla^2 w(X)|\leq |\nabla^2 w|_{0,\a;B(X,R),R}\leq Ch(X)A.
\eeno

On the other hand, if $|Y-X|<R,$ then
\beno
 |\nabla^2 u(X)-\nabla^2 u(Y)|\leq |X-Y|^{\a}R^{-\a}|\nabla^2 w|_{0,\a;B(X,R),R}\leq Ch(X
 )A|X-Y|^{\a}R^{-\a},
\eeno
and if $|Y-X|\geq R,$ then
 \begin{align*}
 |\nabla^2 u(X)-\nabla^2 u(Y)|&\leq |\nabla^2 u(X)|+|\nabla^2 u(Y)|\\
 &\leq Ch(X)A+Ch(Y)A\\
 &\leq C(h(X)+h(Y))A|X-Y|^{\a}R^{-\a}.
 \end{align*}
This gives
\beno
|\na^2 u|_{0,\al;h,R}=|\na^2u|_{0;h}+R^\al[\na^2 u]_{\al;h}\le CA.
\eeno
The proof is finished.
\end{proof}

\section*{Acknowledgments}

Z. Zhang is partially supported by NSF of China under Grant
11371039 and 11421101.
\medskip

 \end{document}